\documentclass[11pt,twoside]{article}
\usepackage[hmargin=0.8in,vmargin=0.9in]{geometry}
\usepackage{fancyhdr}
\usepackage{graphicx}
\usepackage{subfigure}
\usepackage{amssymb}
\usepackage{stmaryrd}
\usepackage{amsmath}
\usepackage{amsfonts}
\usepackage{theorem}
\usepackage{mathrsfs}
\usepackage{mathtools}
\usepackage{bm}
\usepackage{color}
\usepackage{setspace}
\usepackage{exscale}
\usepackage{relsize}
\usepackage{algorithm}
\usepackage{algpseudocode}

\usepackage{epstopdf}
\usepackage{float}
\usepackage{cite}

\usepackage{color}

\usepackage{booktabs,multirow} 
\usepackage{array} 
\usepackage{paralist} 
\usepackage{verbatim} 
\usepackage{subfigure} 

\theoremstyle{plain}			
\newtheorem{thm}{Theorem}[section]

\newtheorem{prop}[thm]{Proposition}

\newtheorem{defn}[thm]{Definition}

{\theorembodyfont{\rmfamily}}

\newenvironment{DA}{{\flushleft \bf Declarations:}}{}
\newtheorem{rmk}{Remark}[section]

\allowdisplaybreaks[1]

\numberwithin{equation}{section}
\numberwithin{figure}{section}
\numberwithin{table}{section}

\newcommand\eref[1]{(\ref{#1})}

\newcommand*\xbar[1]{%
  \hbox{%
    \vbox{%
      \hrule height 0.5pt 
      \kern0.4ex
      \hbox{%
        \kern-0.05em
        \ensuremath{#1}%
        \kern-0.00em
      }%
    }%
  }%
}

\newcommand{\mF}{\bm{F}}

\newcommand{\mS}{\bm{S}}

\newcommand{\mU}{\bm{U}}

\newcommand{\dt}{\Delta t}
\newcommand{\dx}{\Delta x}
\newcommand{\dy}{\Delta y}

\newcommand{\dxi}{\Delta \xi}

\newcommand{\jph}{{j+\frac{1}{2}}}
\newcommand{\jmh}{{j-\frac{1}{2}}}

\newcommand{\kph}{{k+\frac{1}{2}}}
\newcommand{\kmh}{{k-\frac{1}{2}}}

\def\softd{{\leavevmode\setbox1=\hbox{d}%
		\hbox to 1.05\wd1{d\kern-0.4ex{\char039}\hss}}}

\title{A Bayesian Approach to Feedback Control for Hyperbolic Balance Laws}
\author{Markus Bambach\thanks{Department of Mechanical and Process Engineering, ETH Z\"urich, Zürich, 8005, Switzerland; {\tt mbambach@ethz.ch}}, ~Shaoshuai Chu\thanks{Department of Mathematics, RWTH Aachen University, Aachen, 52056, Germany; {\tt chu@igpm.rwth-aachen.de}},
~Michael Herty\thanks{Department of Mathematics, RWTH Aachen University, Aachen, 52056, Germany; Department of Mathematics and Applied
Mathematics, University of Pretoria, Private Bag X20, Hatfield 0028, South Africa; {\tt herty@igpm.rwth-aachen.de}}, ~Yunong Lin\thanks{Department of Mechanical and Process Engineering, ETH Z\"urich, Zürich, 8092, Switzerland; {\tt yunlin@mavt.ethz.ch}} }

\begin{document}
\date{}
\maketitle

\begin{abstract}

We propose a Bayesian framework for feedback boundary control of hyperbolic balance laws. The method propagates a probability distribution over feedback parameters using Lyapunov decay estimates as a likelihood. For linear models, it recovers available analytical stability results and extends to nonlinear regimes where theory is limited. Using first-order local Lax–Friedrichs (LLF) discretizations, we validate the approach on the decoupled wave system and the linearized Saint-Venant equations, reproducing known stability intervals and mixed boundary couplings. We then treat nonlinear and stochastic problems, including the nonlinear Saint-Venant system, one- and two-dimensional Burgers equations, Burgers equation with random initial data, and nonconservative perturbations with source terms, and show that the inferred stability domains are robust with respect to the indicator and the prior. Finally, we demonstrate transfer to a second-order semi-discrete LLF scheme and to a two-parameter feedback model for laser powder bed fusion with power regulation.
\end{abstract}

\smallskip
\noindent
{\bf Key words:} Hyperbolic systems of balance laws; exponential stability; boundary feedback control.

\medskip
\noindent
{\bf AMS subject classification:} 93C20, 93D15, 93D05, 35L65, 65M12, 65M08.

\section{Introduction}
This paper focuses on the stabilization of hyperbolic systems, which in the one-dimensional (1-D) case read as
\begin{equation}\label{1.1}
  \mU_t + \mF(\mU)_x= \mS(\mU),
\end{equation}
where $x\in \Omega$ is a spatial variable, $t \ge 0$ is the time, $\mU\in\mathbb R^d$ is a vector of unknown functions, $\mF$: $\mathbb R^d\to\mathbb R^d$ denotes the (generally nonlinear) flux and $\bm S:\mathbb R^d \to \mathbb R^d$  the source term.

We consider the stabilization of the dynamics near a stationary state of the system~\eqref{1.1} by means of boundary feedback control. This topic has attracted considerable attention due to applications in shallow-water systems, gas flow, traffic flow, and many other areas; see, e.g.,~\cite{Bastin2016,coron2007control}. On the analytical side, a substantial body of work has been devoted to the design of stabilizing boundary conditions and feedback laws for hyperbolic systems, typically based on Lyapunov or energy methods that yield exponential decay estimates and admissible ranges of feedback parameters; see, for instance, \cite{bastin2007lyapunov,coron2007control,coron2008dissipative,coron2007strict}. Further developments, including the treatment of source terms, nonlinear couplings, and networked configurations, can be found in \cite{HY2016,li2005exact,li2009exact,li2008controllabilite,li2003exact,xu2002exponential}. These continuous results are complemented by analytical and numerical analyses, as well as
discretizations that are required to inherit the stabilization properties at the semi-discrete or fully discrete level; see, e.g.,\cite{Bardos1979,banda2013numerical}. Many of the above contributions develop Lyapunov-type methodologies both for the continuous system~\eqref{1.1} and, in some cases, for its numerical approximations, thereby providing explicit decay estimates and conditions on the feedback parameters.

Using a suitable Lyapunov function, the exponential stabilization inequality \eref{2.2a} has been established for linear deterministic hyperbolic systems. The analysis of the Lyapunov function yields explicit estimates for the feedback control parameter $\kappa$; see equation~\eref{2.1}. In the linear case, these estimates take the form of dissipativity conditions on $\kappa$ that guarantee exponential stability; see, e.g.,~\cite{DDTK2017,coron2008dissipative}. In the discrete setting, carefully constructed Lyapunov functionals lead to stability proofs and explicit decay rates consistent with the continuous theory; see \cite{banda2013numerical,Bastin2016,Meurer2013,CHK_Stability,DBC2012}. The Saint--Venant system for shallow-water flows is a canonical example, in which source terms and variable topography are intrinsic. At the continuous level, dissipative boundary conditions for one-dimensional nonlinear systems were established in \cite{coron2008dissipative}, and coupled models such as the Saint--Venant--Exner system were treated via backstepping in \cite{DDTK2017}. On the numerical side, boundary feedback for non-uniform balance laws was proposed in \cite{BW2020}.

Although appropriate boundary controls can be derived for specific systems, most of them concern linearized hyperbolic balance laws. Determining stabilizing controls in more complex settings, such as fully nonlinear, nonconservative, or multidimensional problems, remains highly challenging. To address this difficulty, we propose a Bayesian framework. Instead of working with a single feedback parameter, we numerically evolve a probability density on the feedback parameters; see equation~\eref{2.1}. Starting from a prior distribution, we update this density using Bayes' theorem. As a likelihood function, we employ the observed decay rate of an $L^2$-based Lyapunov functional; see~\eref{2.2a}. The resulting posterior distribution identifies the range of stabilizing feedback parameters. Since this approach requires repeated simulations of the underlying hyperbolic balance law, we present the Bayesian framework directly in the discretized setting below. The approach is non-intrusive: it can be applied to a given scheme and does not require the design of alternative Lyapunov functionals. We first consider linear deterministic models (linear advection/wave and the linearized Saint--Venant system) and verify that the recovered stability domains agree with the theoretical predictions. We then treat nonlinear problems (nonlinear Saint--Venant and Burgers), followed by stochastic variants, a nonconservative case, and a second-order local Lax-Friedrichs (LLF) discretization; see, e.g.,\cite{KTcl,Rus61}. Finally, we examine the evolution of the thermal field during a single-track scanning process in laser powder bed fusion and recover its stability region via the same Bayesian update. These experiments demonstrate both the accuracy and robustness of the proposed method and indicate its potential for more complex systems.

The paper is organized as follows. In \S\ref{sec2}, we introduce the Bayesian boundary feedback framework, including the first-order LLF discretization and the probability-update algorithm with damping and normalization. In \S\ref{sec3}, we present a comprehensive suite of experiments: linear deterministic tests (wave and linearized Saint-Venant), nonlinear benchmarks (nonlinear Saint-Venant and Burgers), stochastic variants, a nonconservative case, a second-order semi-discrete LLF discretization, and the evolution of the thermal field during a single-track scanning process in laser powder bed fusion, all assessed within the proposed Bayesian boundary feedback framework. Finally, \S\ref{sec4} contains concluding remarks and perspectives.

\section{A Bayesian Boundary Feedback Method}\label{sec2}
In this section, we introduce the Bayesian feedback control method for the studied hyperbolic system \eref{1.1} with the specific boundary conditions. 

\subsection{First-Order Local Lax-Friedrichs Scheme}\label{sec2.1}
Assume the computational domain is covered with uniform cells $C_j:=[x_\jmh,x_\jph]$ with $x_\jph-x_\jmh\equiv\dx$ centered at $x_j=(x_\jmh+x_\jph)/2$, $\,j=1,\ldots,N_x$, and the cell average values

\begin{equation*}
  \xbar\mU_j(t):\approx\frac{1}{\dx}\int\limits_{C_j}\mU(x,t)\,{\rm d}x
\end{equation*}
are available at a certain time level $t\ge0$.

The computed cell averages $\xbar \mU_j$ of the 1-D system \eref{1.1} are evolved in time by the first-order LLF scheme:
\begin{equation*}
  \xbar \mU^{n+1}_j=  \xbar \mU^{n}_j - \dfrac{\dt}{\dx}\Big( \bm{{\cal F}}_\jph - \bm{{\cal F}}_\jmh \Big)+\dt \,\mS(\xbar \mU^n_j),
\end{equation*}
where $\bm{{\cal F}}_\jph$ stands for finite-volume numerical flux and in this paper, we use the central (local Lax-Friedrichs, Rusanov; see, e.g., \cite{KTcl,Rus61}) fluxes 
\begin{equation*}
  \bm{{\cal F}}_\jph=\frac{1}{2} \big( \mF (\xbar \mU_j) +\mF(\xbar \mU_{j+1}) \big)-\frac{\alpha_\jph}{2}\big(\xbar \mU_{j+1}-\xbar \mU_j \big), 
\end{equation*}
with the local speeds of propagation estimated using the eigenvalues $\lambda_1({\cal A}) \le \ldots \le \lambda_d({\cal A})$ of the matrix ${\cal A}=\frac{\partial \mF}{\partial \mU}$: 
\begin{equation*}
\alpha_\jph = \max \big\{|\lambda_1 (\xbar \mU_j)|, |\lambda_1 (\xbar \mU_{j+1})|, \ldots, |\lambda_d (\xbar \mU_j)|, |\lambda_d (\xbar \mU_{j+1})|      \big\}. 
\end{equation*}
Finally, the time step $\dt$ is computed by the CFL condition, $\dt = {\rm CFL} \frac{\dx}{ \max\limits_j \{\alpha_\jph\} }$.

\begin{rmk}
It is straightforward to verify that, for linear systems, the first-order LLF scheme coincides with the corresponding upwind scheme.
\end{rmk}

\subsection{Evolution of the Probability Distribution}
For simplicity, we work on the one-dimensional spatial domain $[0,1]$. We assume that, in a neighborhood of the stationary state of interest, the incoming and outgoing characteristic fields are well defined. More precisely, let $\Lambda_i(A)$, $i=1,\ldots,d$, denote the eigenvalues of the Jacobian matrix $A=\frac{\partial \mF}{\partial \mU}$ at the stationary state, and assume that
$$
\Lambda_i(A)>0,\quad i=1,\ldots,m,\qquad \Lambda_i(A)<0,\quad i=m+1,\ldots,d.
$$
Then the first $m$ characteristic fields enter the domain at $x=0$ and leave at $x=1$, whereas the remaining $d-m$ characteristic fields enter the domain at $x=1$ and leave at $x=0$. This assumption is made only to simplify the presentation of the boundary feedback law; the Bayesian update procedure itself only requires an admissible feedback parametrization and a nonnegative stability indicator. We therefore impose boundary conditions at discrete time levels $t=t^n$ in the form
\begin{equation}\label{2.1}
\xbar U^{\,(p)}_0(t^n) = \kappa^{\,(p)}\,\xbar U^{\,(p)}_{N_x}(t^n), \quad p=1,\ldots,m, 
\end{equation}
and 
\begin{equation}\label{2.2}
\xbar U^{\,(p)}_{N_x+1}(t^n) = \kappa^{\,(p)}\,\xbar U^{\,(p)}_{1}(t^n), \quad p=m+1,\ldots,d.  
\end{equation}
Here, $U^{(p)}$ denotes the $p$-th characteristic component of the solution variable, obtained after local diagonalization near the stationary state. For diagonal systems, this notation coincides with the physical components of $\mU$. Here, $\kappa^{\,(p)}$ is the $p$-th component of the feedback parameters $\bm \kappa$. For clarity of presentation, we first describe the algorithm for a single scalar parameter $\kappa$ and omit the component index; the case of several parameters is treated in complete analogy.

We are interested in those values of $\kappa$ for which the solution of the closed–loop system \eqref{1.1}, \eqref{2.1}, \eqref{2.2} satisfies an exponential decay estimate in
$L^2(\Omega)$, in the following sense.

\begin{defn}[Exponential Stabilization]
Fix a feedback parameter $\kappa$ and consider the corresponding closed–loop system \eqref{1.1}, \eqref{2.1}, \eqref{2.2}. We say that $\kappa$ is (continuously) exponentially
stabilizing in $L^2(\Omega)$ if there exist constants $C_1 \ge 1$ and $\nu>0$, independent of $t$ and of the initial datum $\mU(\cdot,0)\in L^2(\Omega)$, such that
\begin{equation*}
||\mU(\cdot,t)||^2_{L^2(\Omega)} \le C_1 e^{-\nu t} ||\mU(\cdot,0)||^2_{L^2(\Omega)},\quad t\ge 0.
\end{equation*}
In the semi-discrete setting, this corresponds to
\begin{equation}\label{2.2a}
\Delta x\sum_{j=1}^{N_x}\|\mU_j(t)\|_2^2\le C_1 e^{-\nu t}
\Delta x\sum_{j=1}^{N_x}\|\mU_j(0)\|_2^2,\qquad t\ge 0.
\end{equation}
\end{defn}
In the numerical experiments, this decay is monitored by Lyapunov–type indicators.

\begin{defn}[Lyapunov–Type Indicators]
Let $\xbar \mU_j^{\,n}(\kappa) = \big(\xbar u^{(1),n}_j(\kappa),\dots, \xbar u^{(d),n}_j(\kappa)\big)^\top$ denote the numerical solution of the LLF scheme of \S\ref{sec2.1} at time $t^n$ in cell $C_j$, obtained with boundary parameter $\kappa$. For fixed constants $\mu_i>0$, $i=1,\dots,d$, we define the discrete Lyapunov functional
\begin{equation}\label{2.3}
\mathcal{L}^n_{\,\mathrm{Lyapunov}}(\kappa)= \dx \sum_{i=1}^{m} \sum_{j=1}^{N_x}e^{-\mu_i x_j}\bigl(\xbar u^{(i),n}_j(\kappa)\bigr)^2+\dx \sum_{i=m+1}^{d} \sum_{j=1}^{N_x}e^{\mu_i x_j}\bigl(\xbar u^{(i),n}_j(\kappa)\bigr)^2,
\end{equation}
and the discrete energy
\begin{equation}\label{2.4}
\mathcal{L}^n_{\,\mathrm{Energy}}(\kappa)= \dx \sum_{i=1}^{d} \sum_{j=1}^{N_x} \bigl(\xbar u^{(i),n}_j(\kappa)\bigr)^2.
\end{equation}
Any quantity of the form $\mathcal{L}^n(\kappa)$ obtained from \eqref{2.3} or \eqref{2.4} will be referred to as a Lyapunov–type indicator. In \S \ref{sec3} we simply write $\mathcal{L}^n(\kappa)$ for the chosen indicator. At the same time, one can easily verify 
\begin{equation}\label{2.4a}
\widetilde{C}_2\mathcal{L}^n_{\,\mathrm{Lyapunov}}(\kappa) \le   \mathcal{L}^n_{\,\mathrm{Energy}}(\kappa)  \le \widetilde{C}_1 \mathcal{L}^n_{\,\mathrm{Lyapunov}}(\kappa),
\end{equation}
where $\widetilde{C}_1$ and $\widetilde{C}_2$ are two positive constants, which are independent of $n$ and $\kappa$. Therefore, both indicators can be used since they are equivalent by \eref{2.4a}. 
\end{defn}

\begin{prop}[Discrete Exponential Stabilization for the LLF Scheme]
Assume that the deterministic system \eqref{1.1} with boundary conditions \eqref{2.1}–\eqref{2.2} satisfies the hypotheses of \cite{coron2007control,banda2013numerical} and that the numerical solution is computed by the first–order LLF scheme of \S\ref{sec2.1}. Then there exist constants $\nu>0$ and $\widetilde{C}\ge 0$, depending only on the system and the feedback law, such that for every feedback parameter $\kappa$ in the corresponding stability domain the numerical solution satisfies
\begin{equation}\label{2.5}
\mathcal{L}^n_{\mathrm{Lyapunov}}(\kappa)\le e^{-\nu t^n}\,\mathcal{L}^0_{\mathrm{Lyapunov}}(\kappa),\qquad n\ge 0,
\end{equation}
and, in addition, the exponential stabilization inequality in the discrete $L^2$–norm
\begin{equation}\label{2.6}
\mathcal{L}^n_{\mathrm{Energy}}(\kappa)\le \widetilde{C}\,e^{-\nu t^n}\,\mathcal{L}^0_{\mathrm{Energy}}(\kappa),\qquad n\ge 0.
\end{equation}
An analogous statement for stochastic systems is obtained in \cite{CHK_Stability}. In particular, the indicators \eqref{2.3}–\eqref{2.4} are consistent discrete realizations of the continuous Lyapunov estimate \eqref{2.2a} and can be used to detect stabilizing feedback parameters.
\end{prop}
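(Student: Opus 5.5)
Since the first-order LLF scheme coincides with the upwind scheme for linear systems (see the Remark in \S\ref{sec2.1}), I will prove \eqref{2.5} by the discrete Lyapunov argument of \cite{banda2013numerical} applied to the diagonal (characteristic) form near the stationary state. The estimate \eqref{2.6} then follows from \eqref{2.5} and the equivalence \eqref{2.4a}, with $\widetilde C=\widetilde C_1/\widetilde C_2$. The steps below are for the linear diagonal case. For the nonlinear or source-term case, the hypotheses of \cite{coron2007control,banda2013numerical} (small perturbation of the steady state, and dissipativity with the source treated as a small perturbation) are what allow the same argument to go through.

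\textbf{Step 1: componentwise update.} For a positive speed $\Lambda_i$, the upwind update reads
$$\xbar u^{(i),n+1}_j=(1-\sigma_i)\xbar u^{(i),n}_j+\sigma_i\xbar u^{(i),n}_{j-1},\qquad \sigma_i=\Lambda_i\dt/\dx\in(0,1]$$
by the CFL condition. Multiply the squared update by $\dx\, e^{-\mu_i x_j}$ and use convexity, $(a(1-\sigma)+b\sigma)^2\le(1-\sigma)a^2+\sigma b^2$. Summing over $j$ and shifting the index in the $b$-term gives
$$\mathcal L^{n+1}_i\le \mathcal L^n_i-\sigma_i(1-e^{-\mu_i\dx})\,\dx\sum_{j=1}^{N_x-1}e^{-\mu_i x_j}(\xbar u^{(i),n}_j)^2+\sigma_i\dx\big(e^{-\mu_i x_1}(\xbar u^{(i),n}_0)^2-e^{-\mu_i x_{N_x}}(\xbar u^{(i),n}_{N_x})^2\big).$$
The interior term supplies the decay: its coefficient is $\approx \mu_i\Lambda_i\dt$ per step, up to the last cell. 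Negative speeds are handled symmetrically with the weight $e^{\mu_i x_j}$ and the ghost cell $N_x+1$.

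\textbf{Step 2: boundary terms.} Insert the feedback \eqref{2.1}--\eqref{2.2}, i.e. $\xbar u_0=\kappa\,\xbar u_{N_x}$, so the boundary contribution becomes $\sigma_i\dx\,(\kappa^2 e^{-\mu_i x_1}-e^{-\mu_i x_{N_x}})(\xbar u^{(i),n}_{N_x})^2$. The stability domain from \cite{coron2007control,banda2013numerical} is the dissipativity condition, here $|\kappa|<1$ (or $\rho(K)<1$ for coupled feedback). Under it, one can choose $\mu_i>0$ small enough that $\kappa^2e^{\mu_i}\le 1$ up to an $O(\dx)$ correction. Then the boundary term, together with the missing interior weight at cell $N_x$, is nonpositive.

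\textbf{Step 3: conclusion.} Combining Steps 1 and 2 yields $\mathcal L^{n+1}\le(1-c\,\dt)\mathcal L^n$ with $c=\min_i\mu_i|\Lambda_i|/2>0$ for $\dx$ small. Hence $\mathcal L^{n+1}\le e^{-c\dt}\mathcal L^n$, and iterating gives \eqref{2.5} with $\nu=c$. The main obstacle is Step 2 uniformly in $\dx$: the $O(\dx)$ mismatch between the weights $e^{-\mu_i x_1}$ and $e^{-\mu_i x_{N_x}}$ and the boundary cell must be absorbed into the decay while keeping $\nu$ independent of the mesh. I would fix $\mu_i$ with strict inequality $\kappa^2e^{\mu_i}<1$, leaving a margin that absorbs the $O(\dx)$ terms. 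For coupled boundary matrices, I would use a diagonal rescaling $\Delta$ with $\|\Delta K\Delta^{-1}\|<1$, as in \cite{coron2008dissipative}.
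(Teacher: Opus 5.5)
Your proposal is correct and follows essentially the paper's route: the paper defers to \cite[Theorem~3.2]{banda2013numerical}, whose proof is this same convexity estimate for the upwind form of the scheme with the weights $e^{\mp\mu_i x_j}$ of \eqref{2.3}, followed by \eqref{2.6} via \eqref{2.4a}. Make explicit that $\kappa^2e^{\mu_i}<1$ forces $\mu_i$, and hence $\nu$ and $\widetilde C$, to depend on $\kappa$ (unavoidably, since they must degenerate as $|\kappa|\to1$); that Step~1 is literally the LLF scheme of \S\ref{sec2.1} only when each component carries numerical viscosity $|\Lambda_i|$ (with a common $\alpha_{j+1/2}>|\Lambda_i|$ the enlarged inflow coefficient breaks your boundary balance for $|\kappa|$ near $1$); and that your aside $\rho(K)<1$ should read $\inf_{\Delta}\|\Delta K\Delta^{-1}\|<1$.
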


The estimates \eqref{2.5}–\eqref{2.6} are proved in \cite[Theorem~3.2]{banda2013numerical} for deterministic systems and extended to the random setting in \cite[Theorem~4.2]{CHK_Stability}. The proofs are based on discrete Lyapunov functionals with weights $e^{-\mu_i x_j}$ and $e^{\mu_i x_j}$, which are precisely the weights used in \eqref{2.3}. We therefore refer to these references for the details.

Formally, the Bayesian feedback procedure can be viewed as an iteration on a family of posterior distributions $\{{\cal P}^n\}_{n\ge 0}$ supported on a compact parameter set. At each time step, the decay behaviour of the indicator $\mathcal{L}^n(\kappa)$ provides information on whether the chosen value of $\kappa$ is stabilizing.

\begin{defn}[Probability Distributions of Feedback Parameters]
Let $\kappa \in [a,b]^d$ denote the feedback parameters and let ${\cal P}^n : [a,b]^d \to [0,\infty)$ be a nonnegative function. If
\begin{equation}\label{2.7a}
\int_{[a,b]^d} {\cal P}^n(\kappa)\, \mathrm{d}\kappa = 1,
\end{equation}
then ${\cal P}^n$ is called a probability density function of $\kappa$ on $[a,b]^d$. To obtain a discrete representation, let \(\{\kappa_\ell\}_{\ell=0}^{N_\kappa}\subset [a,b]^d\) be a finite grid of feedback parameters and let \(\omega_\ell>0\) denote the corresponding quadrature weights. A vector ${\cal P}^n = \bigl({\cal P}^n(\kappa_0),\dots,{\cal P}^n(\kappa_{N_\kappa})\bigr)$ with ${\cal P}^n(\kappa_\ell)\ge 0$ for all $\ell$ and
\begin{equation}\label{2.7}
\sum_{\ell=0}^{N_\kappa}\omega_\ell P^n(\kappa_\ell)=1
\end{equation}
is called a discrete probability mass function on the grid $\{\kappa_\ell\}_{\ell=0}^{N_\kappa}$. For a uniform one-dimensional grid, one may take $\omega_\ell=\Delta \kappa$, up to the usual endpoint convention. We interpret ${\cal P}^n$ and its discrete counterpart as the posterior distribution of $\kappa$ after the $n$-th update.
\end{defn}

\begin{defn}[Bayes’ Formula]
Bayes’ formula provides the rule for updating probabilities when new information becomes available. Given an event space $A \in {\cal X}$ and observed data $B$ with $\mathbb{P}(B)>0$, Bayes’ formula reads
\begin{equation}\label{2.10a}
\mathbb{P}(A|B)= \frac{\mathbb{P}(B|A)\,\mathbb{P}(A)}{\mathbb{P}(B)}.
\end{equation}
Here, $\mathbb{P}(A)$ is the prior probability of $A$, $\mathbb{P}(B|A)$ is the likelihood of observing $B$ under the hypothesis $A$, and $\mathbb{P}(A|B)$ is the posterior probability after taking the information $B$ into account. If the event space is discretized with the events $A_n$, then \eref{2.10a} leads to 
\begin{equation*}
\mathbb{P}(A_k\mid B)= \frac{\mathbb{P}(B|A_k)\,\mathbb{P}(A_k)}{\displaystyle\sum_{i=0}^{N_\kappa} \mathbb{P}(B|A_i)\,\mathbb{P}(A_i)},\qquad k=0,\dots,N_\kappa.
\end{equation*}
\end{defn}

In our setting, the hypotheses are the grid values $\{\kappa_\ell\}_{\ell=0}^{N_\kappa}$ and the data $B$ are the observed values of the Lyapunov–type indicator at time $t^{n+1}$.

\begin{defn}[Bayesian Update Operator]
Given a prior ${\cal P}^n$ as in \eqref{2.7a} and \eqref{2.7} and non–negative likelihood $\Lambda^{n+1}(\kappa)>0$, the Bayesian update of the feedback distribution is defined by
\begin{equation}\label{2.8}
{\cal P}^{n+1}(\kappa)= \frac{\Lambda^{n+1}(\kappa)\,{\cal P}^n(\kappa)} {\displaystyle \int_{[a,b]^d} \Lambda^{n+1}(\kappa)\,{\cal P}^n(\kappa) \mathrm{d}\kappa },
\end{equation}
and 
\begin{equation}\label{2.8a}
{\cal P}^{n+1}(\kappa_\ell)= \frac{\Lambda^{n+1}(\kappa_\ell)P^n(\kappa_\ell)} {\sum_{m=0}^{N_\kappa}\omega_m \Lambda^{n+1}(\kappa_m)P^n(\kappa_m)}.
\end{equation}
\end{defn}

\begin{prop}[Positivity and Normalization]
In the continuous case, assume that ${\cal P}^n$ satisfies \eqref{2.7a} and that the denominator in \eqref{2.8} is positive. Then ${\cal P}^{n+1}$ defined by \eqref{2.8} is nonnegative and satisfies \eqref{2.7a}. In the discrete case, assume that ${\cal P}^n$ satisfies \eqref{2.7} and that the denominator in \eqref{2.8a} is positive. Then ${\cal P}^{n+1}$  defined by \eqref{2.8a} is nonnegative and satisfies \eqref{2.7}. Moreover, if $\Lambda^{n+1}=1$ on a set of stabilizing parameters and $\Lambda^{n+1}=\alpha\in(0,1)$ on its complement, then the total probability mass assigned to the stabilizing set is non-decreasing with respect to the time level $n$.
\end{prop}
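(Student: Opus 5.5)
Positivity and normalization follow directly from the definition \eqref{2.8}. For the continuous case, the numerator $\Lambda^{n+1}(\kappa){\cal P}^n(\kappa)$ is nonnegative because $\Lambda^{n+1}>0$ and ${\cal P}^n\ge0$. The denominator $Z^{n+1}:=\int_{[a,b]^d}\Lambda^{n+1}{\cal P}^n\,\mathrm{d}\kappa$ is positive by assumption, so ${\cal P}^{n+1}\ge0$. Integrating \eqref{2.8} over $[a,b]^d$ and pulling out the constant $1/Z^{n+1}$ gives $\int{\cal P}^{n+1}\,\mathrm{d}\kappa=Z^{n+1}/Z^{n+1}=1$, which is \eqref{2.7a}. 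The discrete case is identical: define $Z_h^{n+1}:=\sum_m\omega_m\Lambda^{n+1}(\kappa_m){\cal P}^n(\kappa_m)>0$. Then each ${\cal P}^{n+1}(\kappa_\ell)\ge0$, and $\sum_\ell\omega_\ell{\cal P}^{n+1}(\kappa_\ell)=Z_h^{n+1}/Z_h^{n+1}=1$, which is \eqref{2.7}. Note that in both cases only the normalization \eqref{2.7a} or \eqref{2.7} of ${\cal P}^n$ is needed to make the argument consistent; nonnegativity of ${\cal P}^{n+1}$ uses only ${\cal P}^n\ge0$.

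For the monotonicity statement, let $S\subset[a,b]^d$ be the stabilizing set and $S^c$ its complement. Define $p^n:=\int_S{\cal P}^n\,\mathrm{d}\kappa$, or $\sum_{\kappa_\ell\in S}\omega_\ell{\cal P}^n(\kappa_\ell)$ in the discrete case. By normalization the mass on the complement is $1-p^n$. With $\Lambda^{n+1}=1$ on $S$ and $\alpha$ on $S^c$, the denominator becomes $Z^{n+1}=p^n+\alpha(1-p^n)$. This is positive whenever $\alpha>0$, since $p^n\in[0,1]$, so the positivity hypothesis is automatically satisfied. Then
\begin{equation*}
p^{n+1}=\frac{p^n}{p^n+\alpha(1-p^n)}.
\end{equation*}
Since $\alpha<1$, we have $p^n+\alpha(1-p^n)\le p^n+(1-p^n)=1$, and hence $p^{n+1}\ge p^n$.

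The only delicate point is the degenerate case $p^n=0$. There $p^{n+1}=0=p^n$, and the inequality still holds, so no division issue arises. Equality $p^{n+1}=p^n$ holds exactly when $p^n\in\{0,1\}$. I would also remark that iterating the recursion gives $p^{n}=p^0/(p^0+\alpha^n(1-p^0))\to1$ whenever $p^0>0$, which gives a quantitative rate, though this is not required. I expect no real obstacle; the main care is in stating that $S$ is fixed independent of $n$ (or at least that the likelihood structure holds at each step with the same $S$). Otherwise the mass on a changing set would not be comparable across time levels.
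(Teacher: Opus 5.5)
Your proof is correct and follows the paper's own argument: positivity and normalization are read off directly from \eqref{2.8} and \eqref{2.8a}, and your recursion $p^{n+1}=p^n/\bigl(p^n+\alpha(1-p^n)\bigr)\ge p^n$ is exactly the ``direct computation'' the paper leaves implicit when it says that mass on the stabilizing set is multiplied by $1$ and mass on the complement by $\alpha$ before renormalization. One small quibble: normalization of ${\cal P}^{n+1}$ does not need the normalization of ${\cal P}^n$ at all, since it follows from dividing by a positive denominator; the hypothesis \eqref{2.7a} or \eqref{2.7} is only used in the monotonicity part, to write the mass on the complement as $1-p^n$.
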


\noindent{\bf Proof.}{\textit{
Positivity and normalization follow immediately from \eqref{2.8} in the continuous case and from \eqref{2.8a} together with \eqref{2.7} in the discrete case. In the two–level case $\Lambda^{n+1}\in\{1,\alpha\}$ with $\alpha\in(0,1)$, the probability mass on the stabilizing set is multiplied by $1$ at each step, whereas mass on the complement is multiplied by $\alpha$ and subsequently renormalized. A direct computation shows that the fraction of total mass carried by the stabilizing set is therefore non–decreasing. \hfill $\square$}}

In the computations below, one can choose the likelihood $\Lambda^{n+1}(\kappa)$ as simple functions of the indicator $\mathcal{L}_{\rm \, Lyapunov}^{n+1}(\kappa)$ and its initial value $\mathcal{L}_{\rm \, Lyapunov}^0(\kappa)$. One can adopt 
\begin{equation}\label{2.9}
\Lambda^{n+1}(\kappa):=\begin{cases}
        1, & \mathcal{L}^{n+1}_{\rm \, Lyapunov}(\kappa)\le C_1 e^{-\nu t^{n+1}}\mathcal{L}^{0}_{\rm \, Lyapunov}(\kappa),\\[0.3em]
        \alpha, & otherwise,
    \end{cases}
\end{equation}
where $\alpha\in(0,1)$ and $C_1>0$ is a constant. Thus, parameters with non–increasing indicators retain their weight, whereas parameters with increasing indicators are penalized by the damping factor $\alpha$. However, this likelihood is directly applicable only to systems for which Lyapunov functionals and the corresponding decay rates are available. For systems whose Lyapunov stability analysis is very complicated,  it is too challenging to use the method \eref{2.9}.

To address this issue, we therefore adopt
\begin{equation}\label{2.9a}
\Lambda^{n+1}(\kappa):=\begin{cases}
        1, & \mathcal{L}^{n+1}(\kappa)\le \mathcal{L}^0(\kappa),\\[0.3em]
        \alpha, & \mathcal{L}^{n+1}(\kappa)>\mathcal{L}^0(\kappa),
    \end{cases}
\end{equation}
where $\mathcal{L}$ can be the Lyapunov function $\mathcal{L}_{\rm \, Lyapunov}$ or the energy function $\mathcal{L}_{\rm \, Energy}$. In this manner, one can numerically identify candidate stabilizing domains for a given system without requiring an explicit analytical decay rate. The numerical results reported in \S \ref{sec3} 
show that both \eref{2.9} and \eref{2.9a} yield the same stability domains for $\kappa$.

In the numerical implementation, the distribution ${\cal P}^n$ is approximated by a discrete probability mass function on the uniform grid $\{\kappa_\ell\}_{\ell=0}^{N_\kappa}\subset[a,b]$. Given a prior ${\cal P}^0(\kappa_\ell)>0$ satisfying \eref{2.7}, reference indicator values $\mathcal{L}^0(\kappa_\ell)$, a damping factor $\alpha\in(0,1)$, and a tolerance $\varepsilon_0>0$, we employ the following algorithm, which is the concrete realization of the Bayesian update \eqref{2.8} with the choice \eqref{2.9a}.

\begin{algorithm}[H]
  \caption{Bayesian update of the feedback parameter distribution}
  \begin{algorithmic}[1]
    \State Choose a parameter grid $\{\kappa_\ell\}_{\ell=0}^{N_\kappa} \subset [a,b]^d$.
    \State Prescribe a prior ${\cal P}^0(\kappa_\ell) > 0$ with  $\sum_{\ell=0}^{N_\kappa} \omega_\ell {\cal P}^0(\kappa_\ell) = 1$.
    \State Compute reference indicator values ${\cal L}_\ell^{0} = {\cal L}^{0}(\kappa_\ell)$.
    \State Fix a damping factor $\alpha \in (0,1)$ and a tolerance $\varepsilon_0 > 0$.
    \For{$n = 0,1,2,\dots$}
      \State {Random choice of $\ell\in \{0,\dots,N_\kappa\}$}.
        \State Advance $\mU_j^{n}(\kappa_\ell) \to \mU_j^{n+1}(\kappa_\ell)$ with the LLF scheme in \S \ref{sec2.1}.
        \If{${\cal L}^{n+1}(\kappa_\ell) \le {\cal L}^{0}(\kappa_\ell)$}
          \State $\widetilde{\cal P}^{\,n+1}(\kappa_\ell) \leftarrow {\cal P}^{n}(\kappa_\ell)$
        \Else
          \State $\widetilde{\cal P}^{\,n+1}(\kappa_\ell) \leftarrow \alpha\,{\cal P}^{n}(\kappa_\ell)$
        \EndIf
      \State Normalize the probabilities as in \eref{2.8a}.
      \State Compute the variation of the distribution ${\cal V}^{n+1} =\bigl|\bigl|{\cal P}^{n+1} - {\cal P}^{n}\bigr|\bigr|_{\ell^1}$.
      \If{${\cal V}^{n+1} \le \varepsilon_0$}
        \State \textbf{break}
      \EndIf
    \EndFor
  \end{algorithmic}
\end{algorithm}

In all numerical experiments reported in \S\ref{sec3}, we use $\varepsilon_0=10^{-12}$ and $\alpha=\tfrac{1}{2}$. Parameters $\kappa_\ell$ in the unstable region satisfy ${\cal P}^n(\kappa_\ell)\approx 0$ in the limiting distribution, whereas stabilizing parameters satisfy ${\cal P}^n(\kappa_\ell)\gg 0$.

\begin{rmk}
In place of the reference value $\mathcal{L}^0(\kappa_\ell)$ used in the algorithm, one may also compare $\mathcal{L}^{n+1}(\kappa_\ell)$ with the previous indicator value $\mathcal{L}^n(\kappa_\ell)$ and modify the damping step as follows:
\begin{equation*}
    \text{if } \mathcal{L}^{n+1}(\kappa_\ell) \le \mathcal{L}^n(\kappa_\ell)
    \ \Rightarrow\ 
    \widetilde{P}^{\,n+1}(\kappa_\ell) \gets P^n(\kappa_\ell),
    \quad
    \text{if } \mathcal{L}^{n+1}(\kappa_\ell) > \mathcal{L}^n(\kappa_\ell)
    \ \Rightarrow\ 
    \widetilde{P}^{\,n+1}(\kappa_\ell) \gets \alpha\,P^n(\kappa_\ell).
\end{equation*}
The subsequent normalization and the stopping criterion based on ${\cal V}^{n+1}$ remain unchanged. In all experiments presented in \S\ref{sec3}, this alternative update leads to the same stability domains as the reference–based criterion, with only minor differences in the number of iterations needed for convergence.
\end{rmk}

\begin{rmk}
The Bayesian update step does not rely on strict hyperbolicity itself. It only requires an admissible finite-dimensional feedback parametrization and a nonnegative stability indicator. If eigenvalues coincide or vanish, the construction of characteristic variables, boundary feedback laws, and Lyapunov-type indicators becomes model-dependent. Once these ingredients are specified, the probability update and the positivity-normalization property remain unchanged.
\end{rmk}

\section{Numerical Results}\label{sec3}

In this section, we assess the proposed boundary-feedback control on a suite of numerical tests. We begin with linear deterministic models (linear wave; linearized Saint-Venant) and verify that the stability domains agree with theory. We then treat nonlinear problems (nonlinear Saint-Venant; Burgers), followed by stochastic variants, a nonconservative case, and a second-order LLF discretization. Finally, we examine the evolution of the thermal field during a single-track scanning process in laser powder bed fusion and recover its stability domain via the same Bayesian update.
In Examples 1--6, we use the CFL number 1, while in Examples 7--9, we use the CFL number 0.5.

\subsection{Linear Deterministic Examples}
In this section, we consider two linear deterministic examples---the linear wave equations and the linearized Saint-Venant system---to show that the stability domains computed by the proposed method are consistent with the theoretical results in \cite{banda2013numerical}.

\paragraph*{Example 1---Linear Wave Equations.}
In the first example taken from \cite{banda2013numerical}, we consider the linear wave equations 
\begin{equation}\label{3.1a}
\begin{pmatrix}
  u^{(1)}  \\
  u^{(2)}  \\
\end{pmatrix}_t + \begin{pmatrix}
  1 & 0  \\
  0 & -1  \\
\end{pmatrix}
\begin{pmatrix}
  u^{(1)}  \\
  u^{(2)}  \\
\end{pmatrix}_x=
\begin{pmatrix}
  0  \\
  0  \\
\end{pmatrix}
\end{equation}
with the following initial conditions:
\begin{equation*}
u^{(1)}(x,0)=-\frac{1}{2}, \quad  u^{(2)}(x,0)=\frac{1}{2}.    
\end{equation*}
The boundary conditions are given by 
\begin{equation*}
u^{(1)}(0,t)= \kappa  u^{(1)}(1,t), \quad u^{(2)}(1,t)= \kappa  u^{(2)}(0,t).
\end{equation*}
For the initial probability distributions of the parameter $\kappa$, we consider the following two cases:
\begin{itemize}
  \item Case I:  $\kappa \in [-2,2]$ and ${\cal P}(\kappa)= \frac{1}{4}$; 
  \item Case II: $\kappa \in [-5,5]$ and ${\cal P}(\kappa)= \frac{1}{\sigma \sqrt{2 \pi}}e^{-\frac{(\kappa-\mu)^2}{2 \sigma^2}}$, with $\sigma=\mu=1$. 
\end{itemize}

We first normalize the probability distributions and then compute the numerical results on a uniform mesh with $N_x=100$ and $N_\kappa=800$. The obtained numerical results are presented in Figure \ref{fig1}. Here, the values of the probability distributions are updated based on the corresponding Lyapunov function 
$$
{\cal L}\big(\xbar u^{(1)}, \xbar u^{(2)}\big) =\dx \sum_{j=1}^{N_x}e^{-\mu_1 x_j} \Big( \xbar u^{(1)}_j \Big)^2 +\dx \sum_{j=1}^{N_x}e^{\mu_2 x_j} \Big(\xbar u^{(2)}_j\Big)^2,
$$
where $\mu_1$ and $\mu_2$ are two positive constants. For more details, we refer the reader to \cite[(33)]{banda2013numerical}. As expected, the stability interval $(-1,1)$ is recovered independently of the chosen prior, which is consistent with \cite[Theorem 2.1]{banda2013numerical} and validates the proposed methodology.

\begin{figure}[ht!]
\centerline{\includegraphics[trim=0.9cm 0.4cm 1.2cm 0.2cm, clip, width=6.cm]{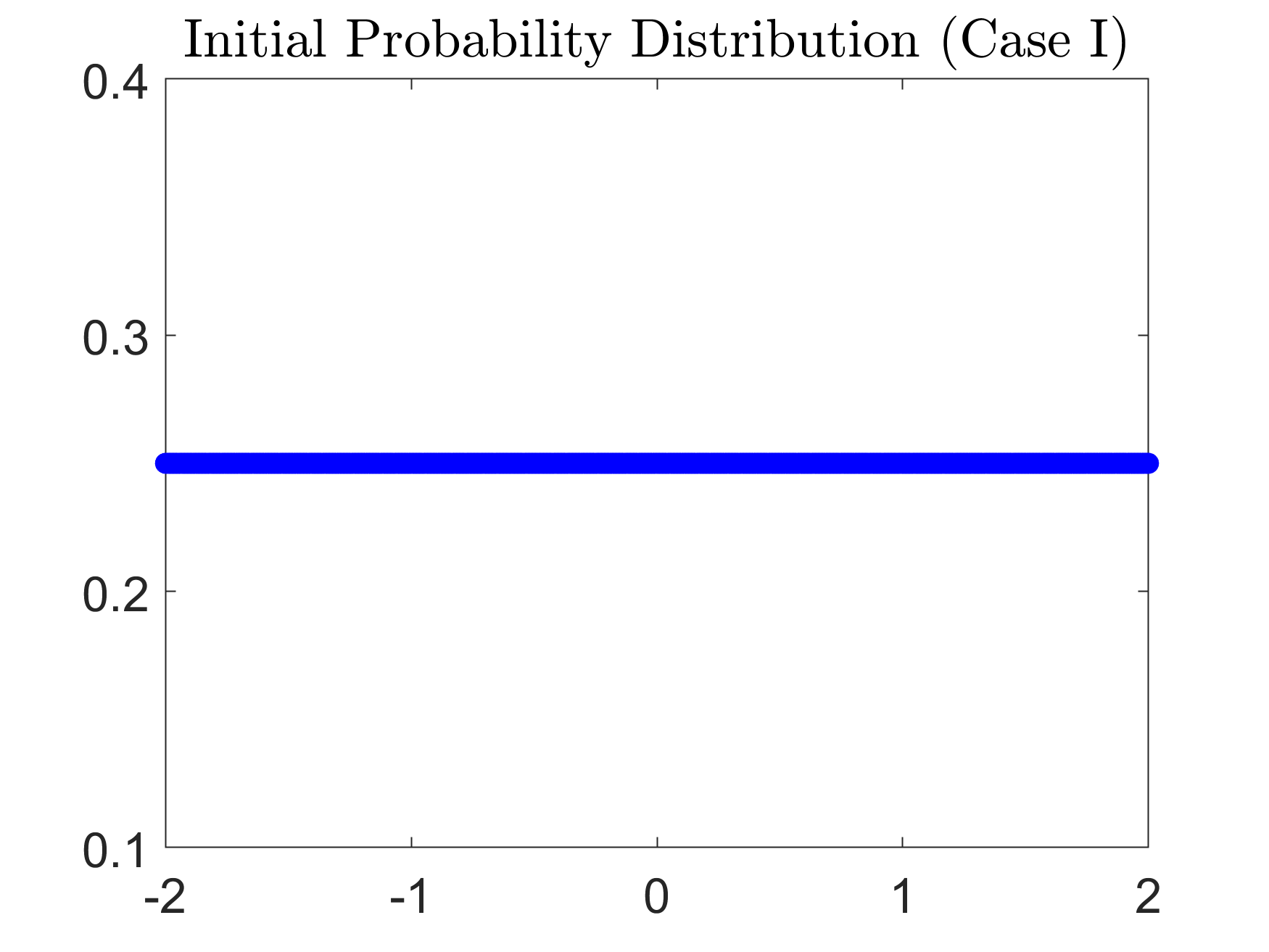}\hspace*{1cm}
            \includegraphics[trim=0.9cm 0.4cm 1.2cm 0.2cm, clip, width=6.cm]{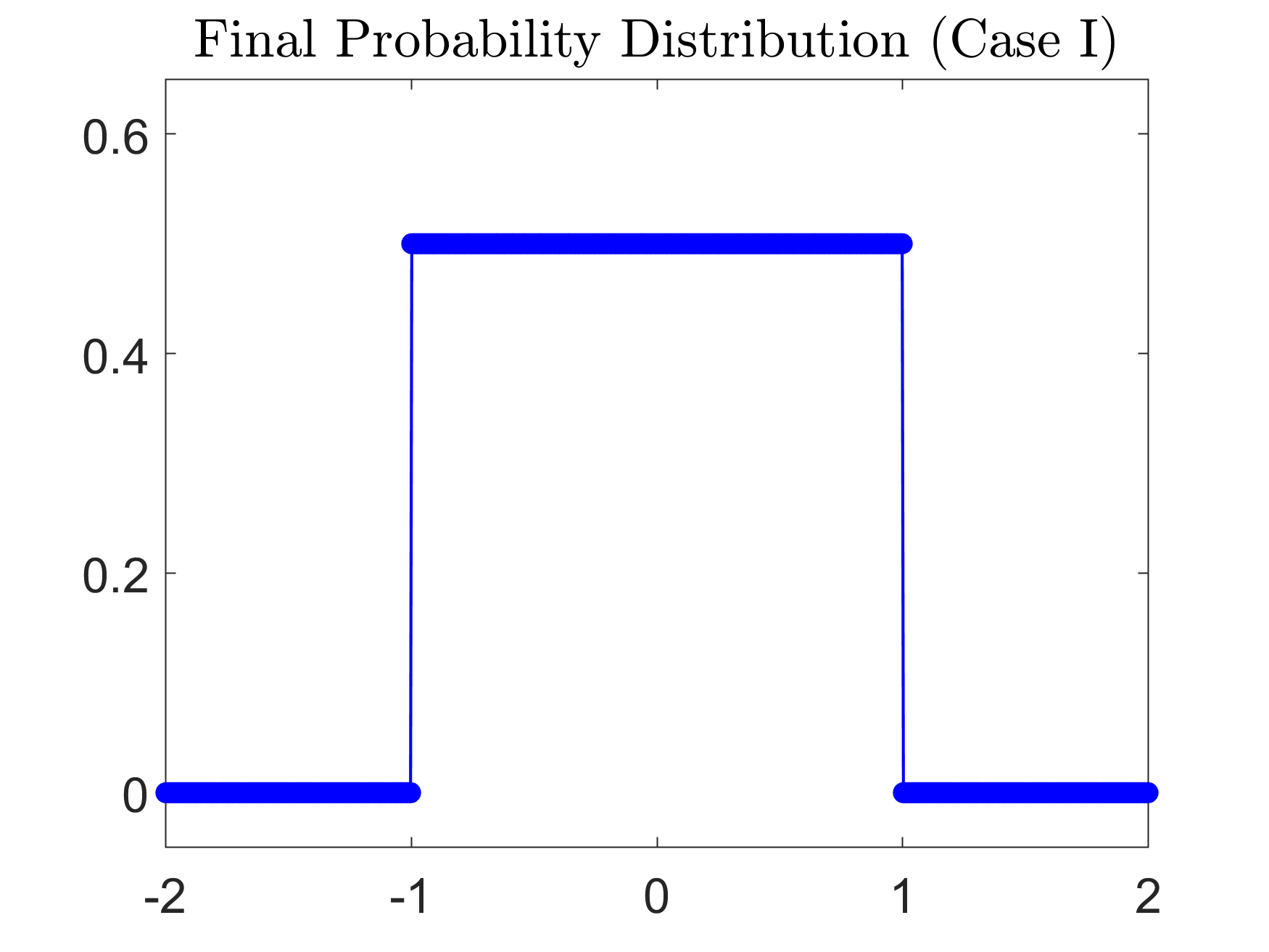}}
\vskip 12pt 
\centerline{\includegraphics[trim=0.9cm 0.4cm 1.2cm 0.2cm, clip, width=6.cm]{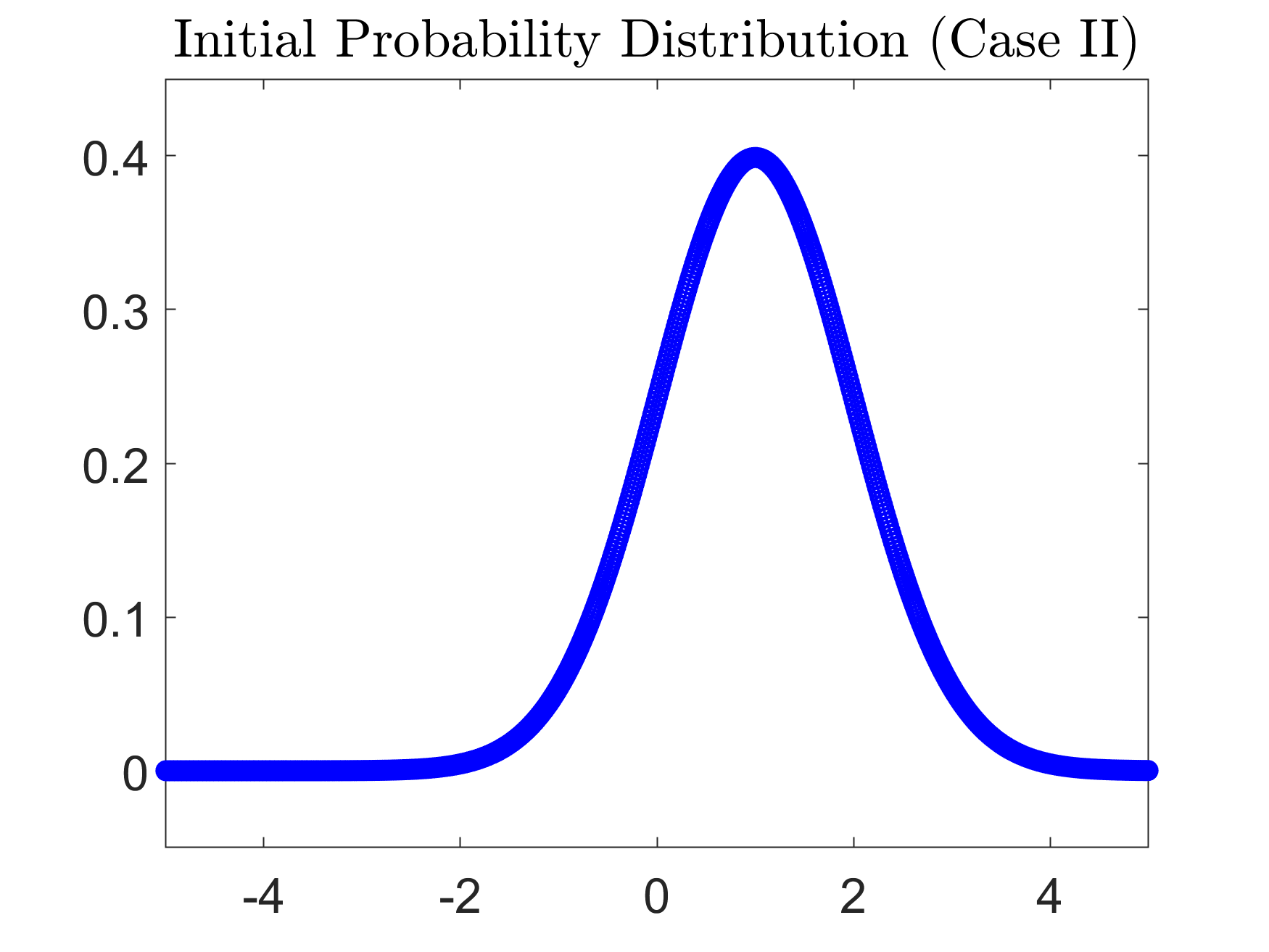}\hspace*{1cm}
            \includegraphics[trim=0.9cm 0.4cm 1.2cm 0.2cm, clip, width=6.cm]{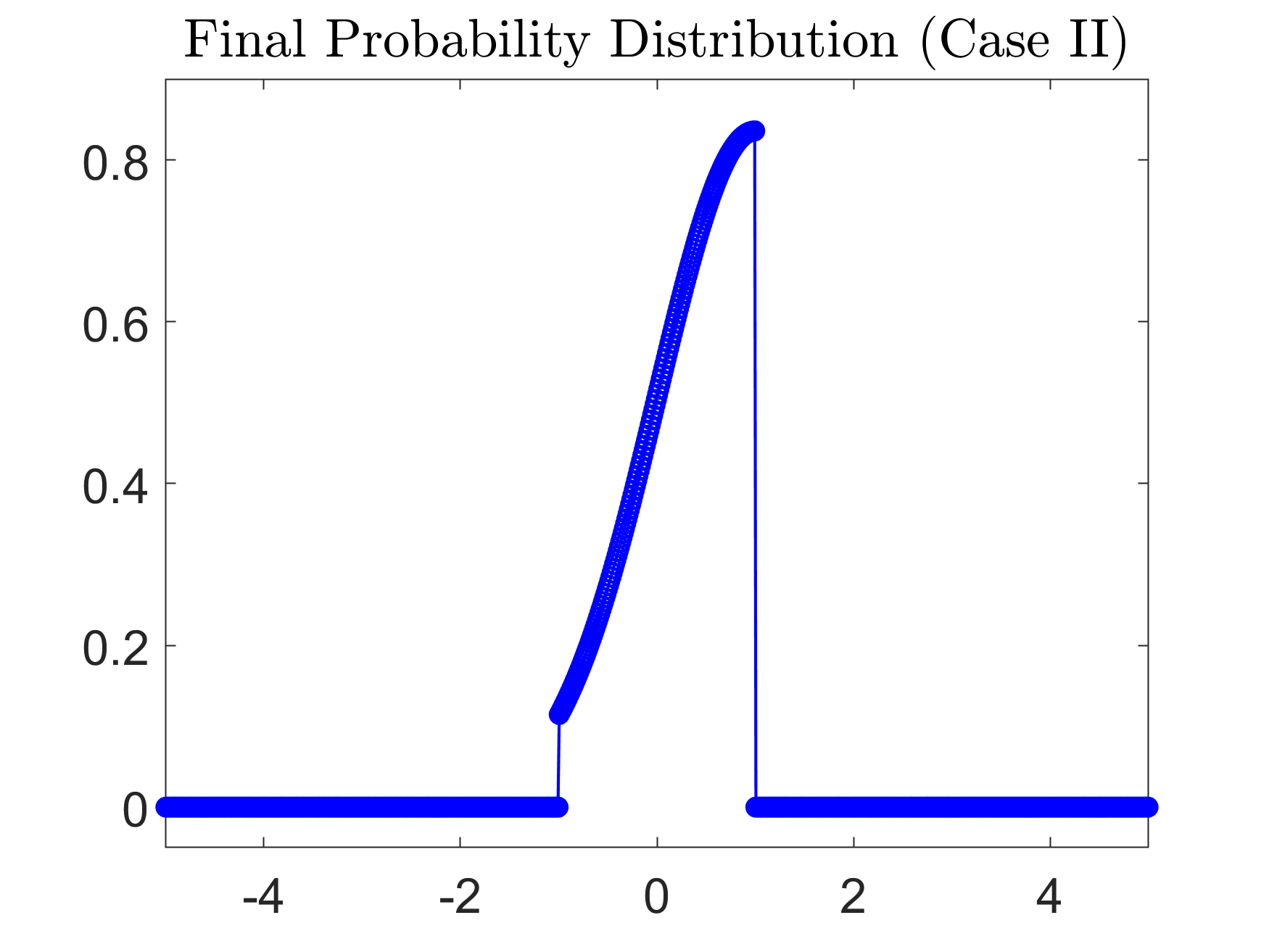}}            
\caption{\sf Example 1: Initial (left) and final probability distributions for Cases I (top row) and II (bottom row).\label{fig1}}
\end{figure}

As discussed above, one may also use other Lyapunov-type indicators to update the probability distribution. In this example, we also show the numerical results by measuring the corresponding discrete energy
\begin{equation}\label{3.1}
  {\cal L}\big(\xbar u^{(1)}, \xbar u^{(2)}\big)=\dx \sum_{j=1}^{N_x} \Big[  \Big( \xbar u^{(1)}_j \Big)^2+  \Big( \xbar u^{(2)}_j \Big)^2 \Big].
\end{equation}
The numerical results are plotted in Figure \ref{fig3}, where one can see that the numerical results are consistent with those obtained using the Lyapunov functional. 
\begin{figure}[ht!]
\centerline{\includegraphics[trim=0.9cm 0.4cm 1.2cm 0.2cm, clip, width=6.cm]{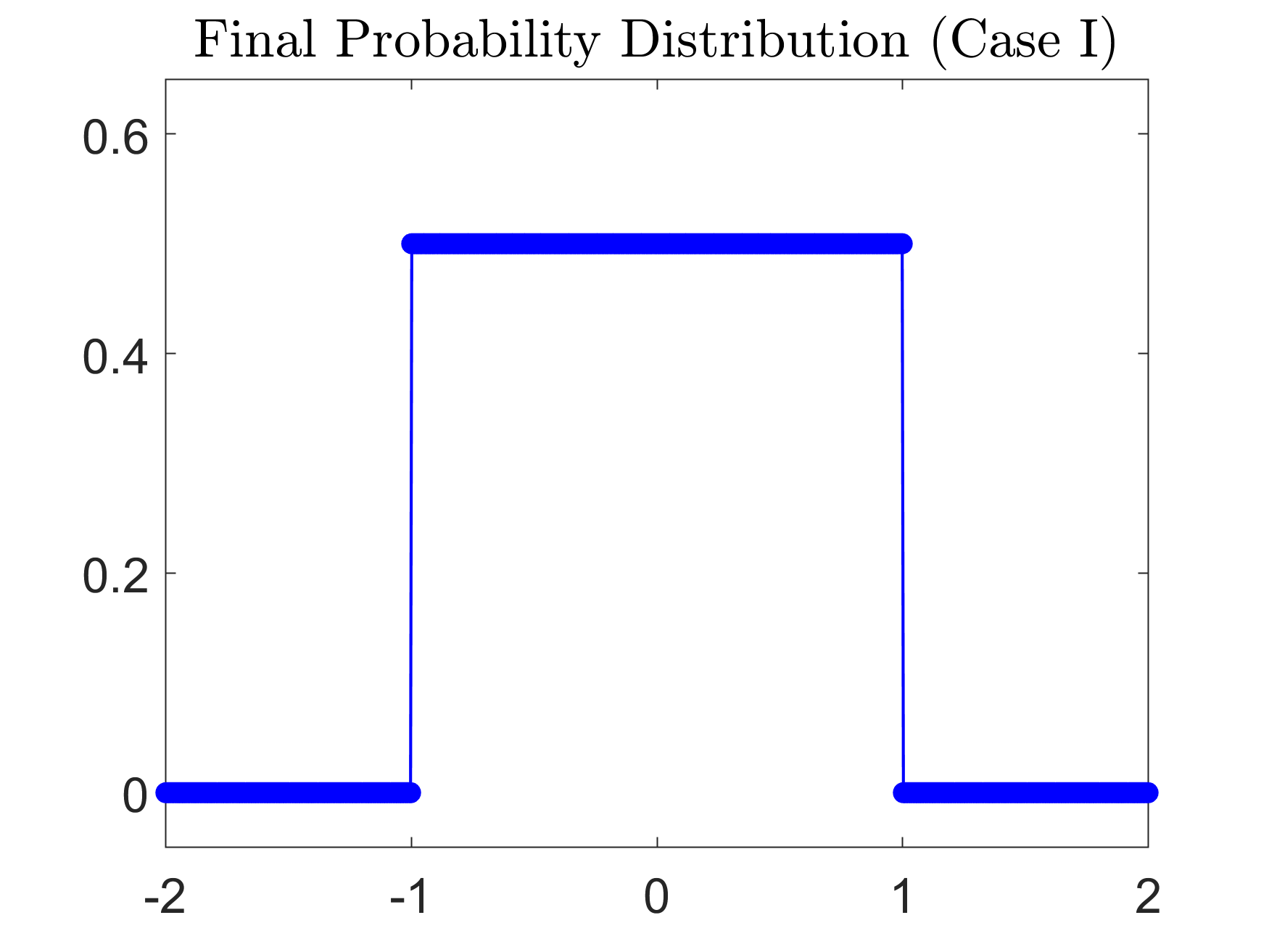}\hspace*{1cm}
            \includegraphics[trim=0.9cm 0.4cm 1.2cm 0.2cm, clip, width=6.cm]{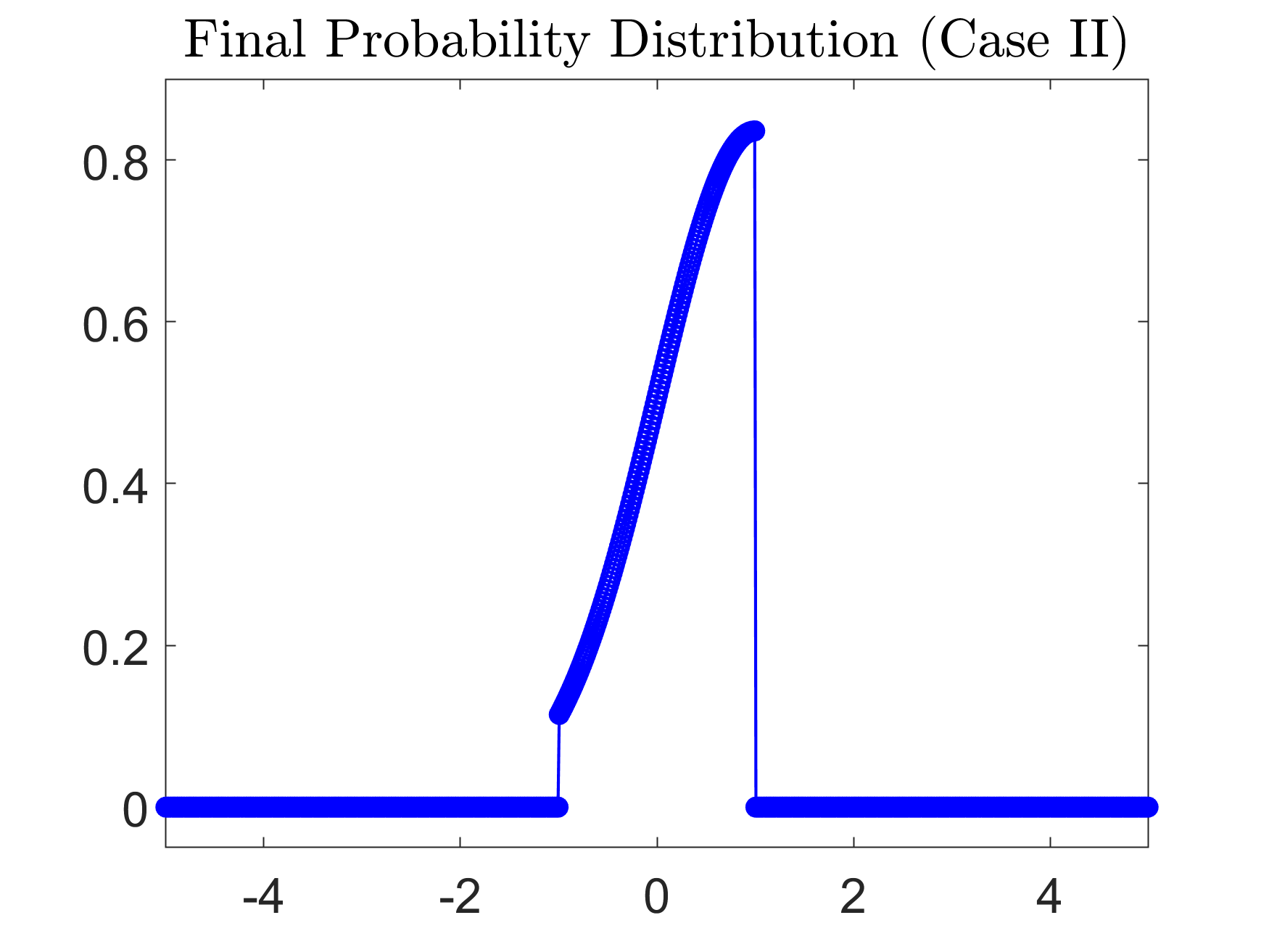}}
\caption{\sf Example 1: Final probability distributions for Cases I (left) and II (right).\label{fig3}}
\end{figure}

We also present the numerical results with $N_\kappa=100$ in Figure 3.3. The same stability interval is recovered, although with lower resolution.
\begin{figure}[ht!]
\centerline{\includegraphics[trim=0.9cm 0.4cm 1.2cm 0.2cm, clip, width=6.cm]{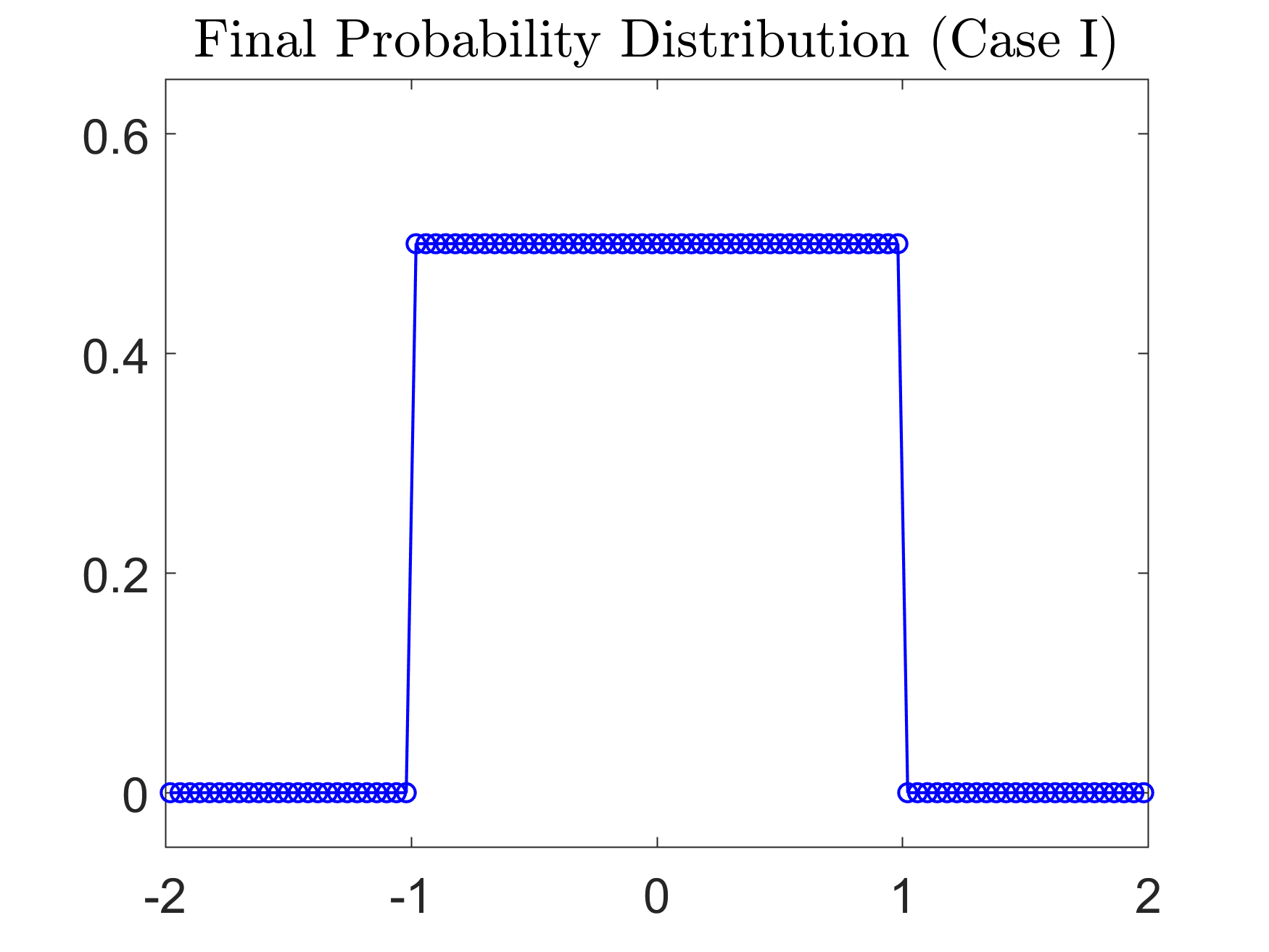}\hspace*{1cm}
            \includegraphics[trim=0.9cm 0.4cm 1.2cm 0.2cm, clip, width=6.cm]{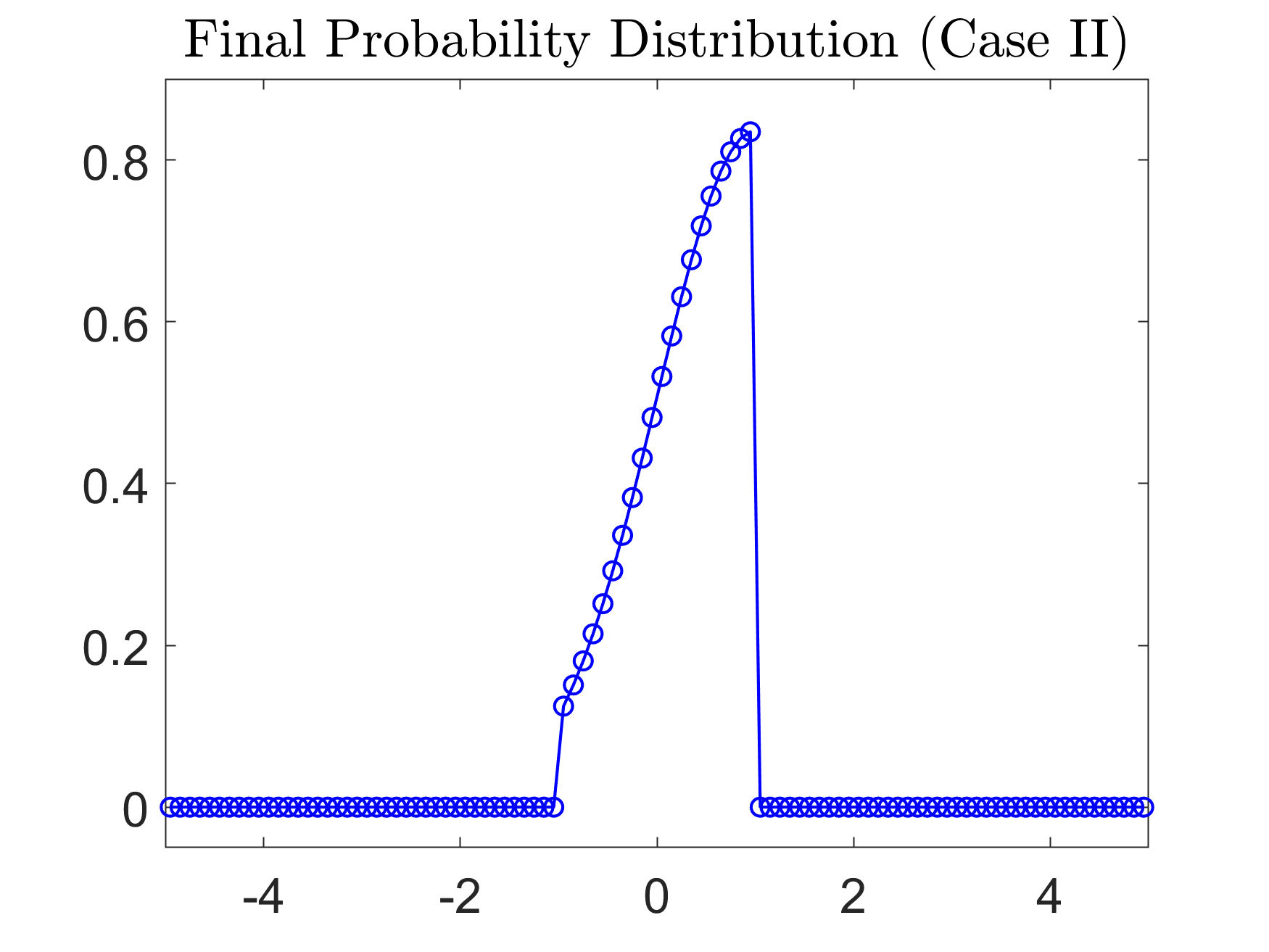}}
\caption{\sf Example 1: Final probability distributions for Cases I (left) and II (right) with $N_\kappa=100$.\label{fig3a}}
\end{figure}

\paragraph*{Example 2---Linearized Saint-Venant System.}
In the second example taken from \cite{banda2013numerical}, we consider the linearized Saint-Venant system of shallow-water equations, which is  motivated by recent advances in the continuous setting; see, e.g., \cite{Coron1999,bastin2007lyapunov,coron2007strict,deHalleux2003}. The objective is to regulate water depth and velocity in an open canal or in canal networks. External factors such as weather variations or uncontrolled inflows can cause fluctuations in water depth, which must be mitigated to maintain a predetermined target depth $\xbar h$ and velocity $\xbar v$. As in \cite{coron2007strict,deHalleux2003}, we neglect the source terms. The target water state is a constant with $\xbar h(x)=4$ and $\xbar v(x)=\frac{5}{2}$. To analyze small deviations from this steady state, the Saint-Venant system of shallow-water equations
\begin{equation*}
 \begin{pmatrix}
	 h \\
	 q
 \end{pmatrix} _t+ \begin{pmatrix}
	q  \\
	\dfrac{q^2}{h}+\dfrac{g}{2}h^2
 \end{pmatrix}_x=\begin{pmatrix}
	0 \\
	0 
 \end{pmatrix},
\end{equation*}
where $q=hv$, is linearized by introducing perturbations $(\delta h, \delta v)$ with $h=\xbar h+\delta h$ and $v = \xbar v +\delta v$. The resulting diagonalized equations for these perturbations are given by:
\begin{equation*}
 \begin{pmatrix}
	u^{(1)} \\
	u^{(2)}
 \end{pmatrix} _t+ \begin{pmatrix}
	\Lambda_1 & 0  \\
	0 & \Lambda_2
 \end{pmatrix}\begin{pmatrix}
	u^{(1)}\\
	u^{(2)} 
 \end{pmatrix} _x=\begin{pmatrix}
	0 \\
	0 
 \end{pmatrix},
\end{equation*}
where  
\begin{equation}\label{4.5a}
\Lambda_1=\xbar v + \sqrt{\mathstrut{g\,\xbar h}}, \quad \Lambda_2=\xbar v - \sqrt{\mathstrut{g\,\xbar h}}, \quad   u^{(1)}=\delta v+\sqrt{\frac{g}{\xbar h}}\delta h, \quad  {\rm and} \quad  u^{(2)}=\delta v-\sqrt{\frac{g}{\xbar h}}\delta h.
\end{equation}
In this example, we consider the following initial conditions:
\begin{equation*}
	\delta h(x,0)=\frac{1}{2}\sin(\pi x), \quad \delta v(x,0)=\frac{20}{8+\sin(\pi x)}-\frac{5}{2}, 
\end{equation*}
subject to the boundary conditions 
\begin{equation}\label{3.3a}
u^{(1)}(0,t) = \kappa \, u^{(1)}(1,t), \quad  {\rm and} \quad  u^{(2)}(1,t)  =\kappa\, u^{(2)}(0,t).
\end{equation}
For the initial probability distributions of the parameter $\kappa$, we consider the following two cases:
\begin{itemize}
  \item Case I:  $\kappa \in [-2,2]$ and ${\cal P}(\kappa)= \frac{1}{4}$; 
  \item Case II: $\kappa \in [-5,5]$ and ${\cal P}(\kappa)= \frac{1}{\sigma \sqrt{2 \pi}}e^{-\frac{(\kappa-\mu)^2}{2 \sigma^2}}$, with $\sigma=1, \mu=0$. 
\end{itemize}

We first normalize the probability distributions and then compute the numerical results on a uniform mesh with $N_x=100$ and $N_\kappa=800$. The values of the probability distributions are updated based on the discrete energy \eref{3.1}, and the  obtained numerical results are presented in Figure \ref{fig5}.  One can clearly see that the stability region of the boundary control parameter is $(-1,1)$, which is consistent with the one proved in \cite[Theorem 2.1]{banda2013numerical}, which validates the proposed methodology.

\begin{figure}[ht!]
\centerline{\includegraphics[trim=0.9cm 0.4cm 1.2cm 0.2cm, clip, width=6.cm]{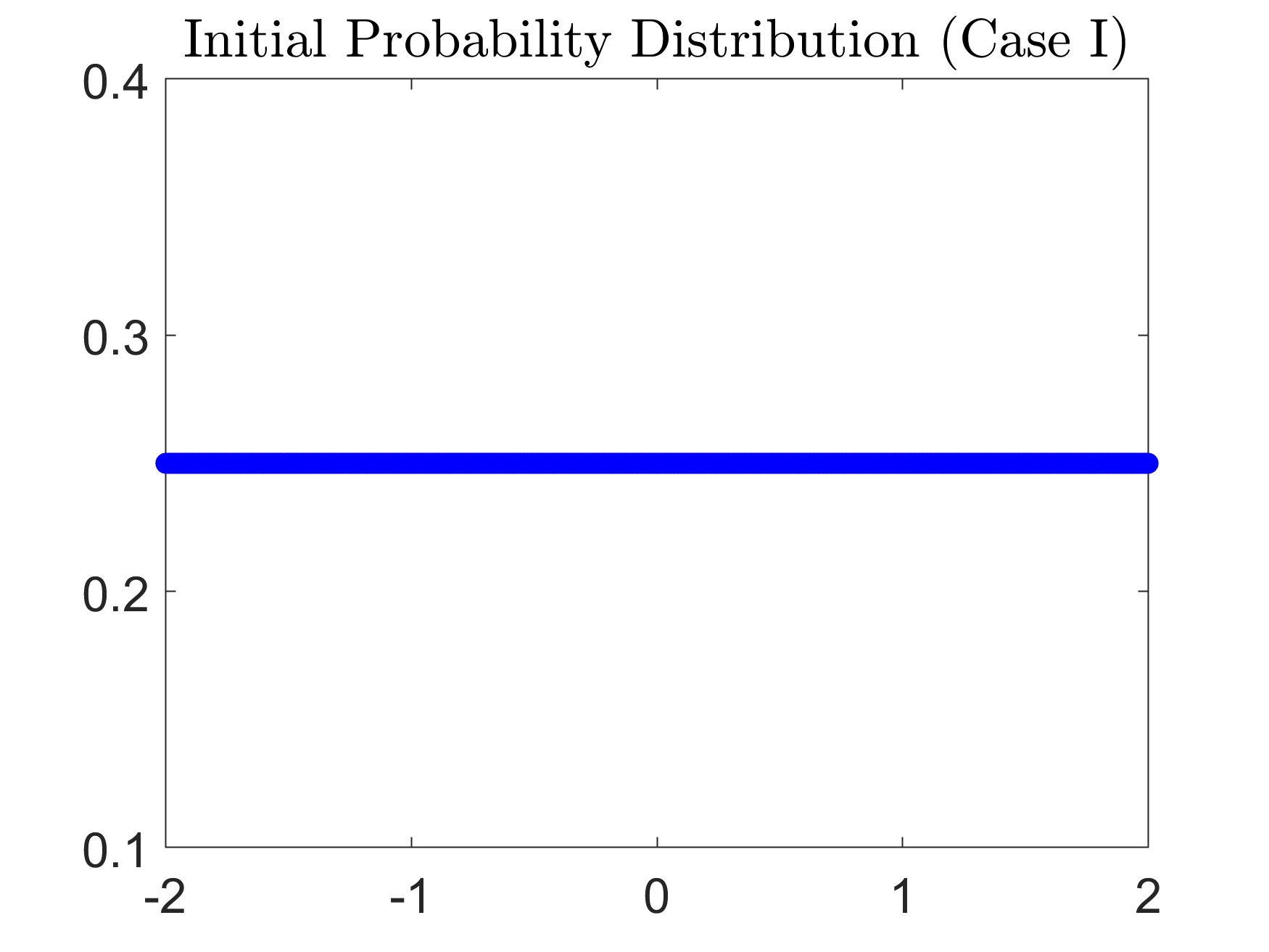}\hspace*{1cm}
            \includegraphics[trim=0.9cm 0.4cm 1.2cm 0.2cm, clip, width=6.cm]{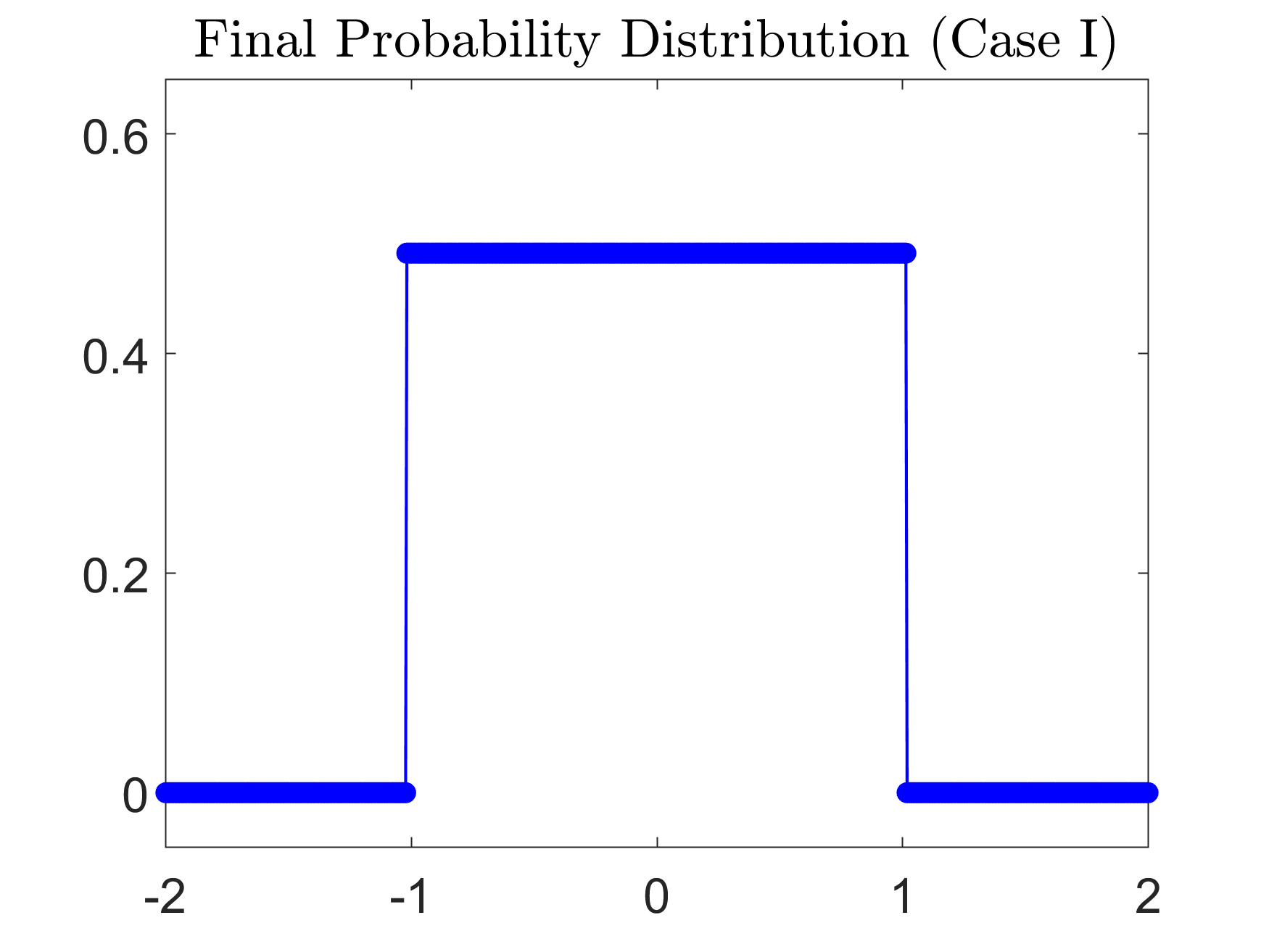}}
\vskip 12pt
\centerline{\includegraphics[trim=0.9cm 0.4cm 1.2cm 0.2cm, clip, width=6.cm]{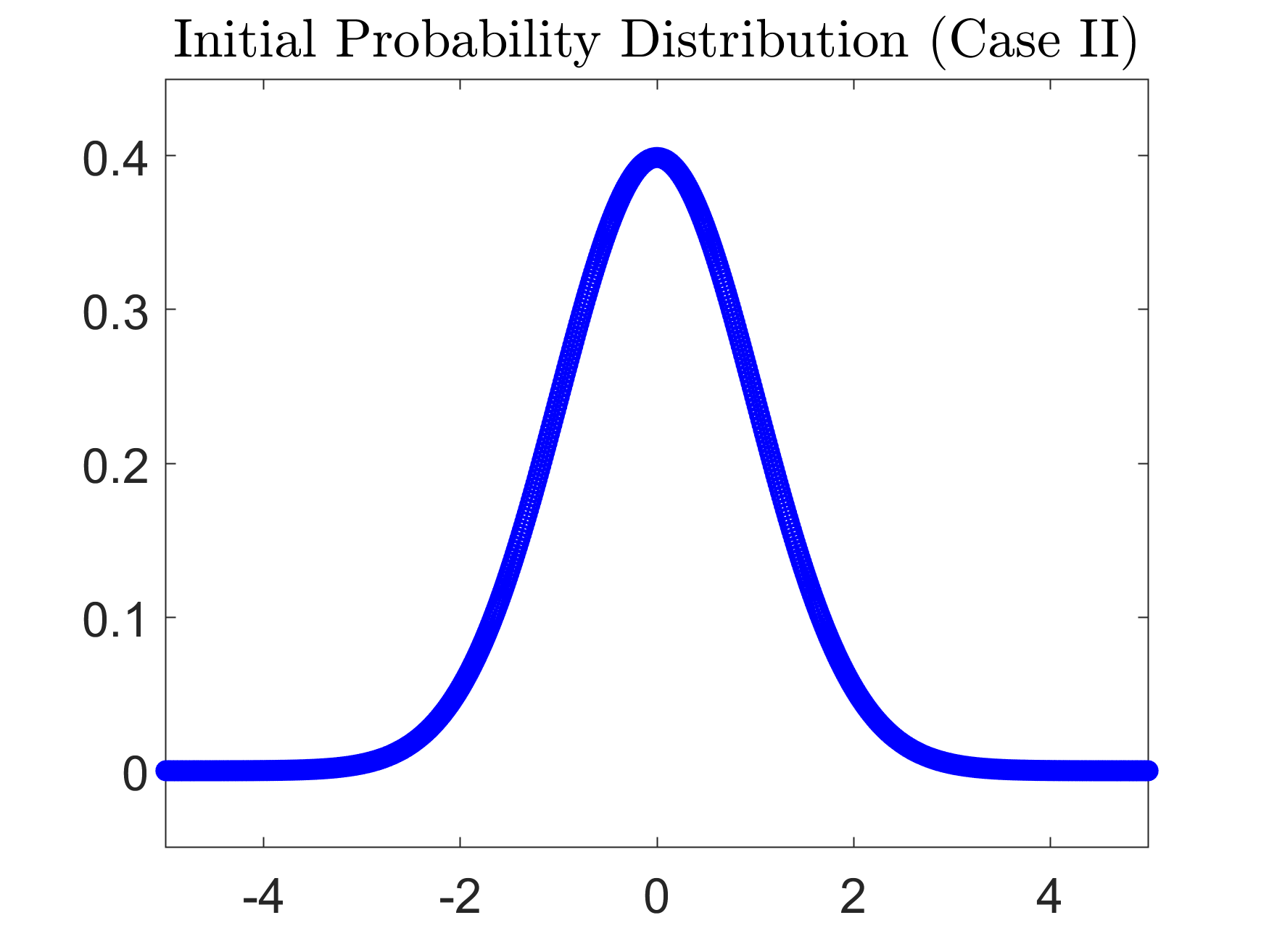}\hspace*{1cm}
            \includegraphics[trim=0.9cm 0.4cm 1.2cm 0.2cm, clip, width=6.cm]{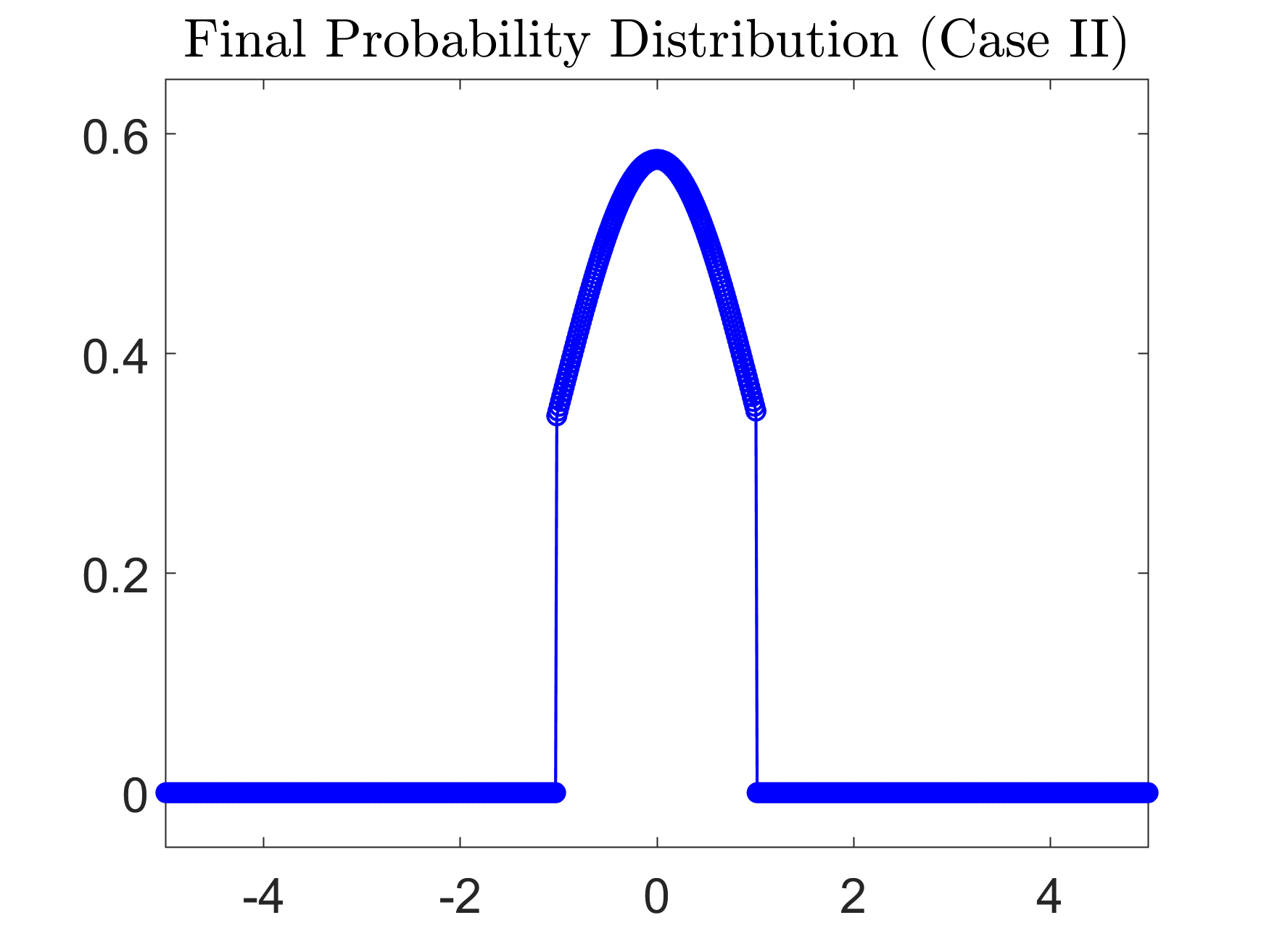}}
\caption{\sf Example 2: Initial (left) and final probability distributions for Cases I (top row) and II (bottom row) for the boundary conditions \eref{3.3a}.\label{fig5}}
\end{figure}

As in \cite{CHK_Stability,banda2013numerical}, we also compute the numerical results using the following boundary conditions:
\begin{equation}\label{3.4a}
u^{(1)}(0,t) = \kappa u^{(2)}(0,t), \quad  u^{(2)}(1,t) = \kappa u^{(1)}(1,t).
\end{equation} 
The obtained numerical results are reported in Figure \ref{fig7}, where one can see that stability region of the boundary control parameter is $(-0.67,0.67)$, which, once again, is consistent with the one proved in \cite[Theorem 2.1]{banda2013numerical}.

\begin{figure}[ht!]
\centerline{\includegraphics[trim=0.9cm 0.4cm 1.2cm 0.2cm, clip, width=6.cm]{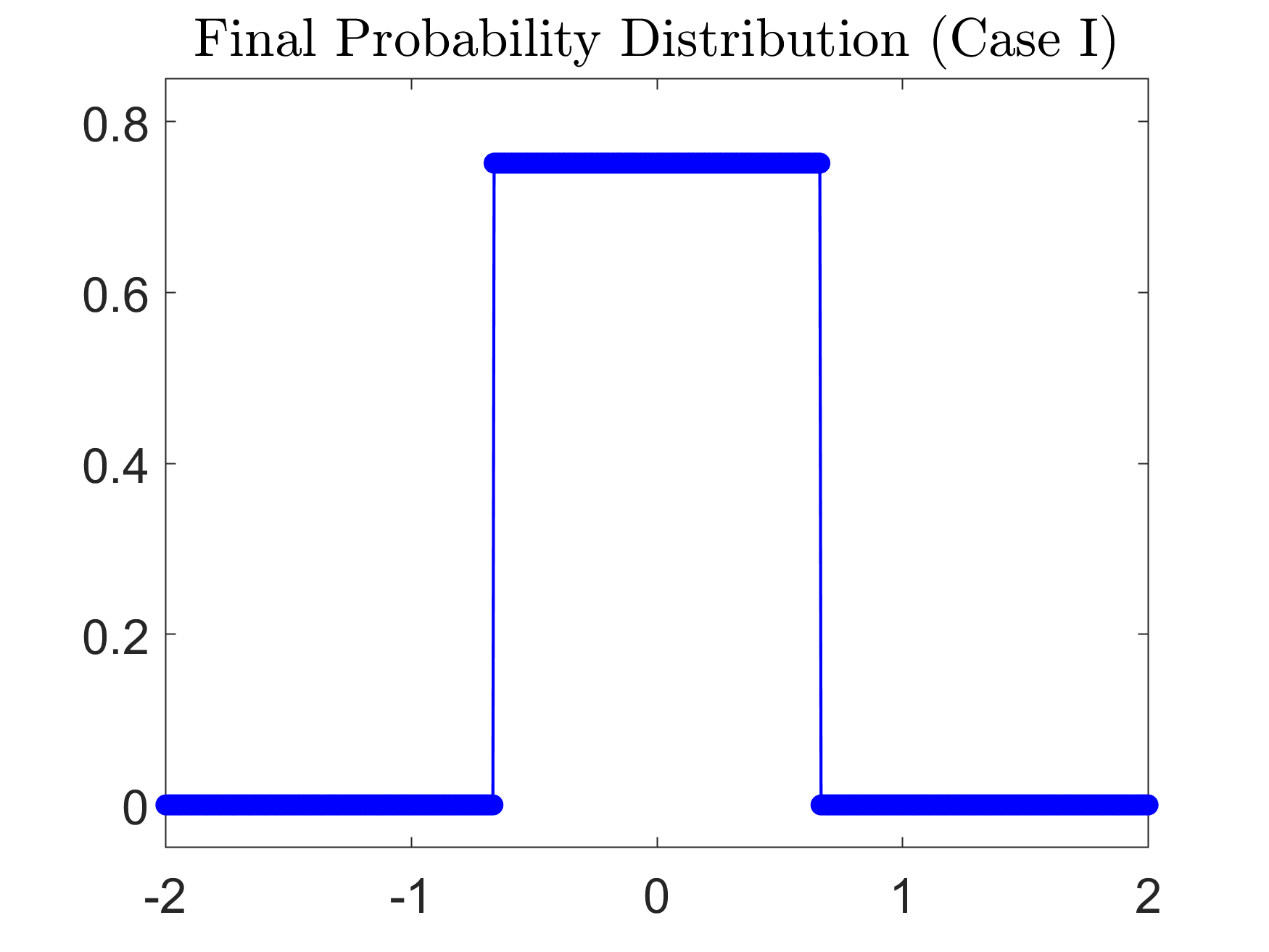}\hspace*{1cm}
            \includegraphics[trim=0.9cm 0.4cm 1.2cm 0.2cm, clip, width=6.cm]{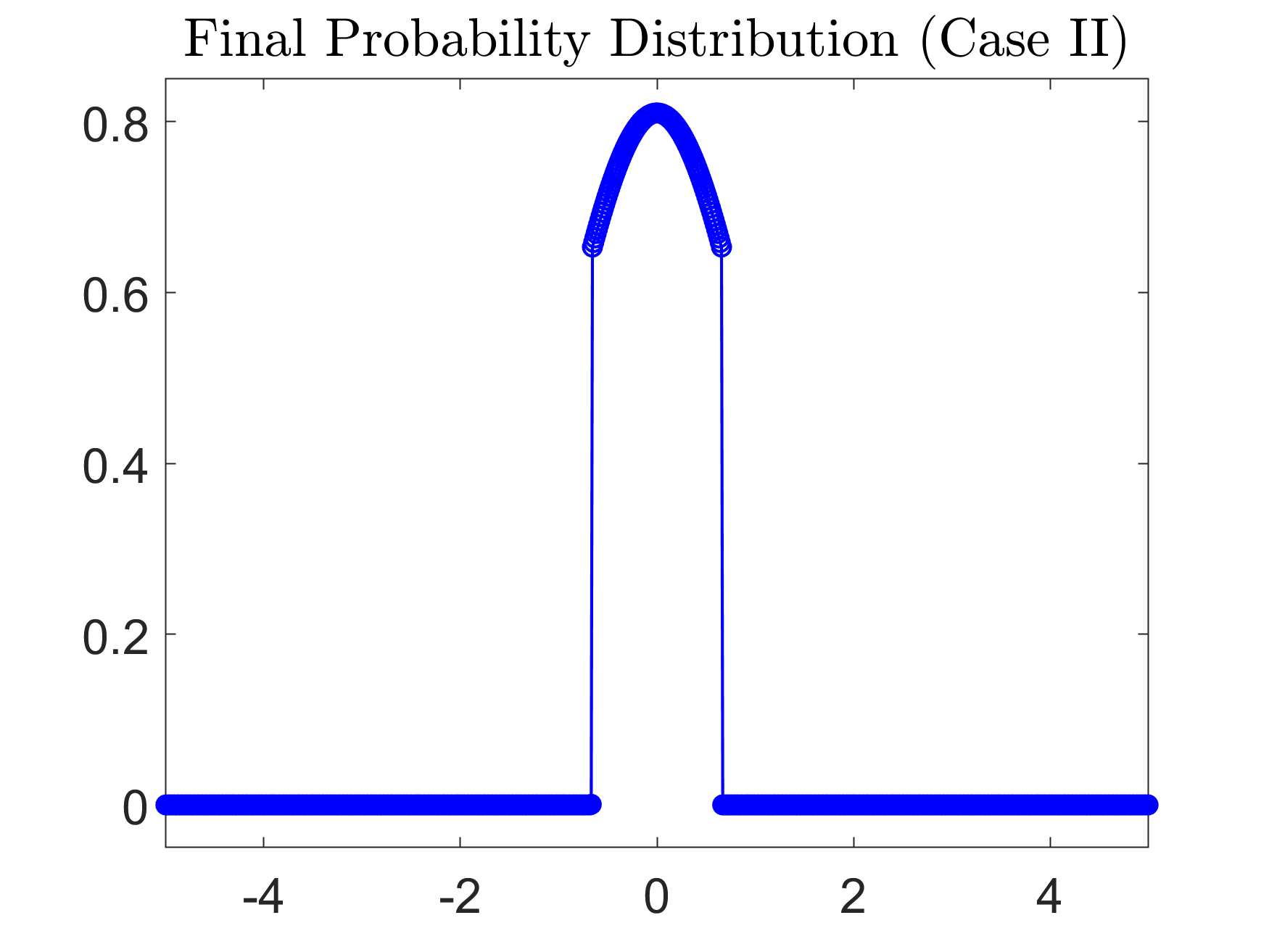}}
\caption{\sf Example 2: Final probability distributions for Cases I (left) and II (right) for the boundary conditions \eref{3.4a}.\label{fig7}}
\end{figure}

\subsection{Nonlinear Deterministic Examples}
In this section, we consider two nonlinear deterministic examples --- the nonlinear Saint-Venant system and Burgers equation and compute the stable domains. It is well known that determining stability domains for nonlinear systems is challenging, and we now show that the proposed method provides a feasible route to identify them.

\paragraph*{Example 3---Nonlinear Saint-Venant System.}
In the third example taken from \cite{CHK_Stability}, we consider the same setting as in Example 2, but with different equations for the perturbations:
\begin{equation}\label{5.5aa}
 \begin{pmatrix}
	u^{(1)} \\
	u^{(2)}
 \end{pmatrix} _t+ \begin{pmatrix}
	\xbar v +\delta v + \sqrt{g \big(\xbar h+\delta h\big)} & 0  \\
	0 & \xbar v+\delta v - \sqrt{g \big(\xbar h+\delta h\big)}
 \end{pmatrix}\begin{pmatrix}
	u^{(1)}\\
	u^{(2)}
 \end{pmatrix} _x=\begin{pmatrix}
	0 \\
	0 
 \end{pmatrix},
\end{equation}
where $u^{(1)}$ and $u^{(2)}$ are defined by \eref{4.5a}. Notice that the system \eref{5.5aa} is nonlinear.

We compute the numerical results on a uniform mesh with $N_x=100$ and $N_\kappa=800$. The values of the probability distributions are updated based on the discrete energy \eref{3.1}, and the obtained numerical results are presented in Figure \ref{fig9}. One can clearly see that the stability region of the boundary control parameter is approximately $(-1,1)$, slightly different from the one reported in Example~2.

\begin{figure}[ht!]
\centerline{\includegraphics[trim=0.9cm 0.4cm 1.2cm 0.2cm, clip, width=6.cm]{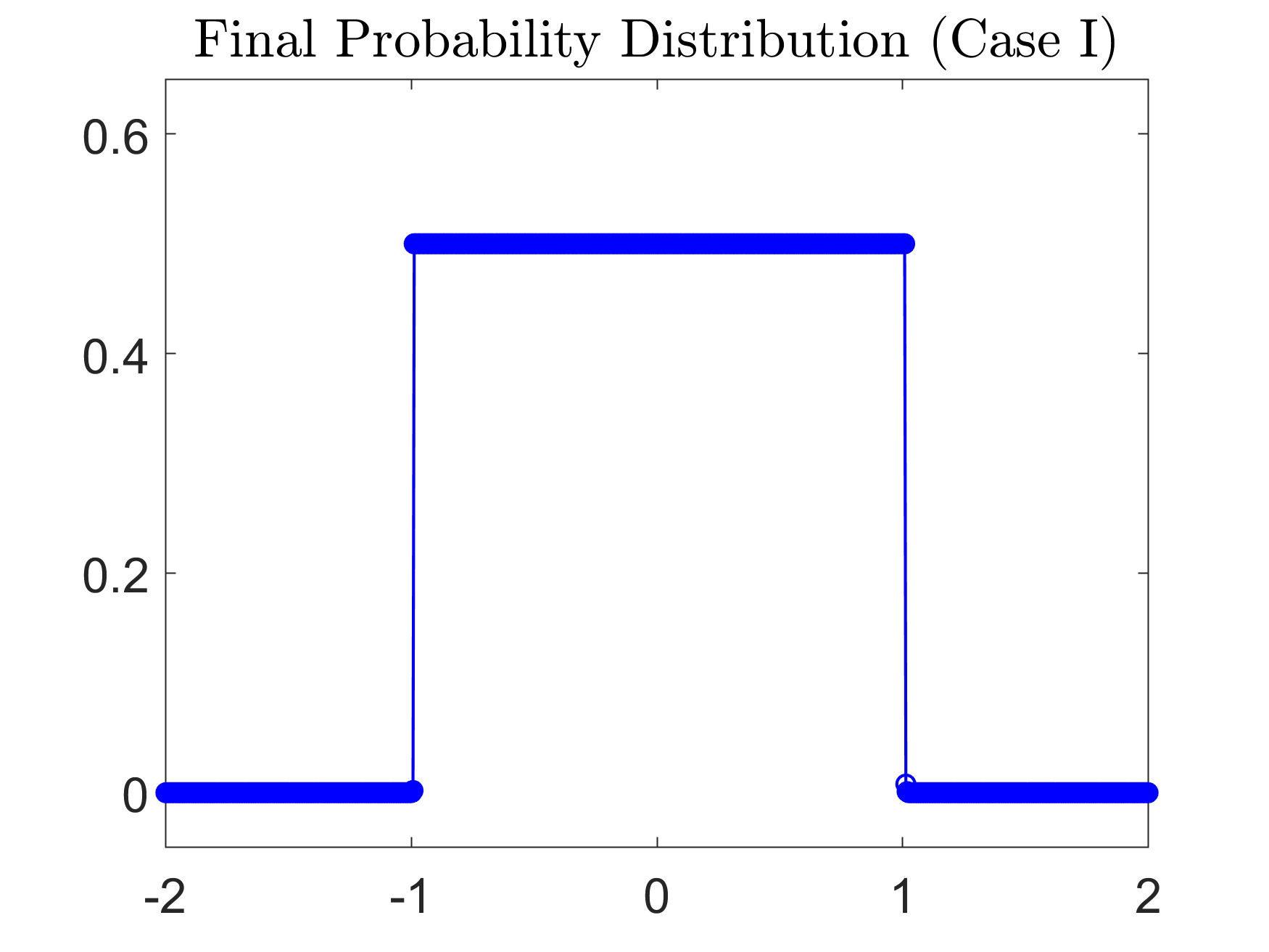}\hspace*{1cm}
            \includegraphics[trim=0.9cm 0.4cm 1.2cm 0.2cm, clip, width=6.cm]{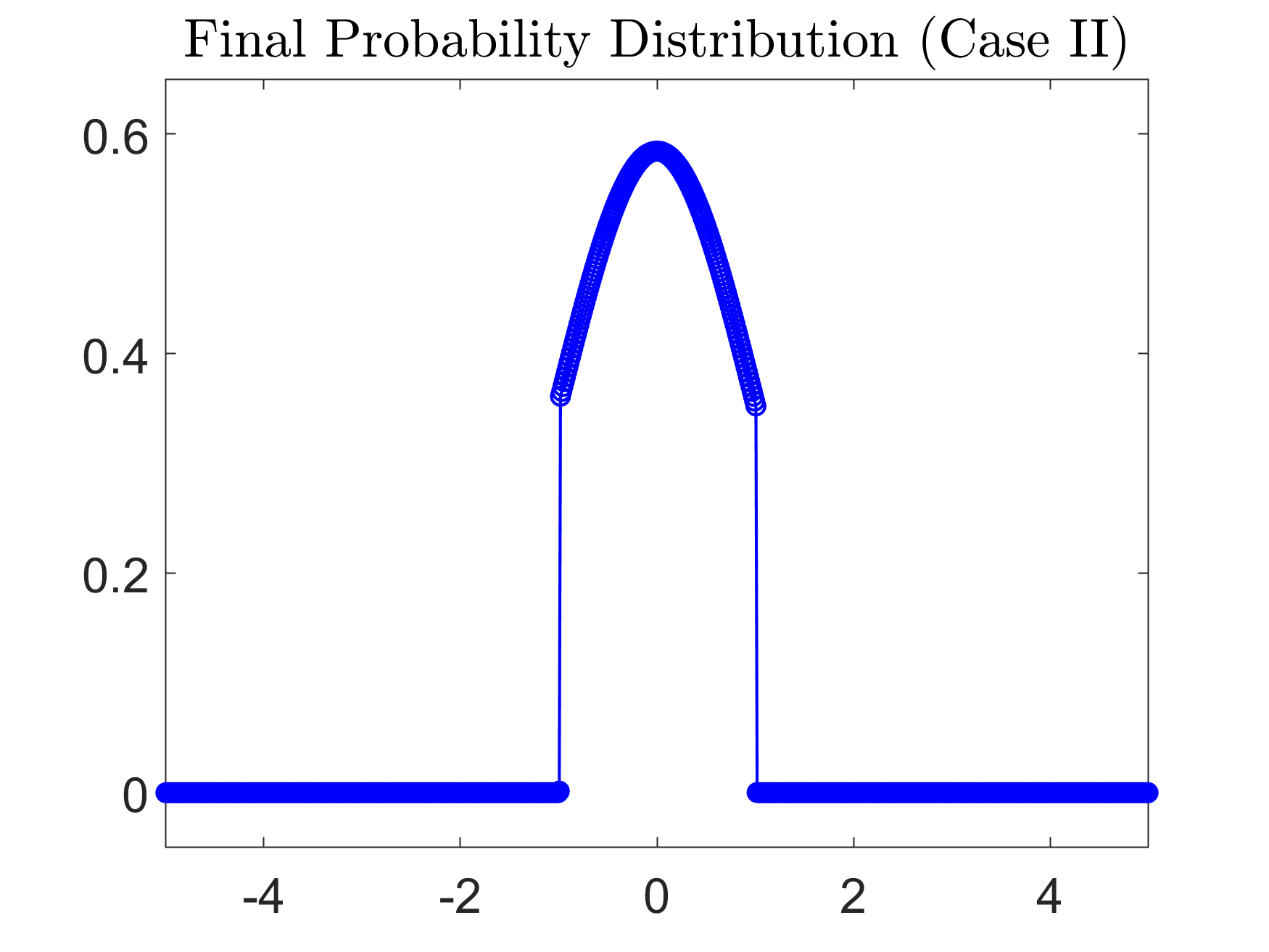}}
\caption{\sf Example 3: Final probability distributions for Cases I (left) and II (right).\label{fig9}}
\end{figure}

\paragraph{Example 4---Burgers Equation.} In this example, we consider the Burgers equation
\begin{equation}\label{3.5}
  u_t +\Big(\frac{u^2}{2} \Big)_x= 0
\end{equation}
with the initial conditions:
$$
\begin{aligned}
&{\rm Case \,\,I:} \quad u(x,0)=\begin{cases}
          0.3, & \mbox{if } x<0.3, \\
          0.2, & \mbox{if } 0.3 \le x \le 0.7,  \\
         -0.1, & \mbox{otherwise},
       \end{cases}\\[1.ex] 
&{\rm Case \,\,II:} \quad u(x,0)=\begin{cases}
          0.1, & \mbox{if } x<0.3, \\
          0.2, & \mbox{if } 0.3 \le x \le 0.7,  \\
          0.1, & \mbox{otherwise},
       \end{cases}
       \end{aligned}
$$
subject to the following boundary conditions:
\begin{equation}\label{3.6}
\begin{cases}
  u(0-,t)=\kappa u(1-,t), & \mbox{if $u(0+,t)>0$,} \\
  u(0-,t)= u(0+,t),       & \mbox{otherwise}, 
\end{cases}\qquad 
\begin{cases}
  u(1+,t)=\kappa u(0+,t), & \mbox{if $u(1-,t)<0$,} \\
  u(1+,t)= u(1-,t),       & \mbox{otherwise}. 
\end{cases}
\end{equation}

We compute the numerical results for Cases I and II on a uniform mesh with $N_x = 200$ and $N_\kappa = 400$, with the initial probability distribution defined on $\kappa \in [-2,2]$ by ${\cal P}(\kappa) = 1/4$. The values of the probability distributions are updated based on the discrete energy 
\begin{equation*}
  {\cal L}(\xbar u)=\dx \sum_{j=1}^{N_x} \xbar u^2_j,
\end{equation*}
and the obtained numerical results are presented in Figure \ref{fig11}. One can see that for the Burgers equation with the boundary conditions defined in \eref{3.6}, the stable domain of the parameter $\kappa$ depends on the initial conditions, and the stable domains are $(-1, 2)$ and $(-2, 1)$ for Cases I and II, respectively.

\begin{figure}[ht!]
\centerline{\includegraphics[trim=0.9cm 0.4cm 1.2cm 0.2cm, clip, width=6.cm]{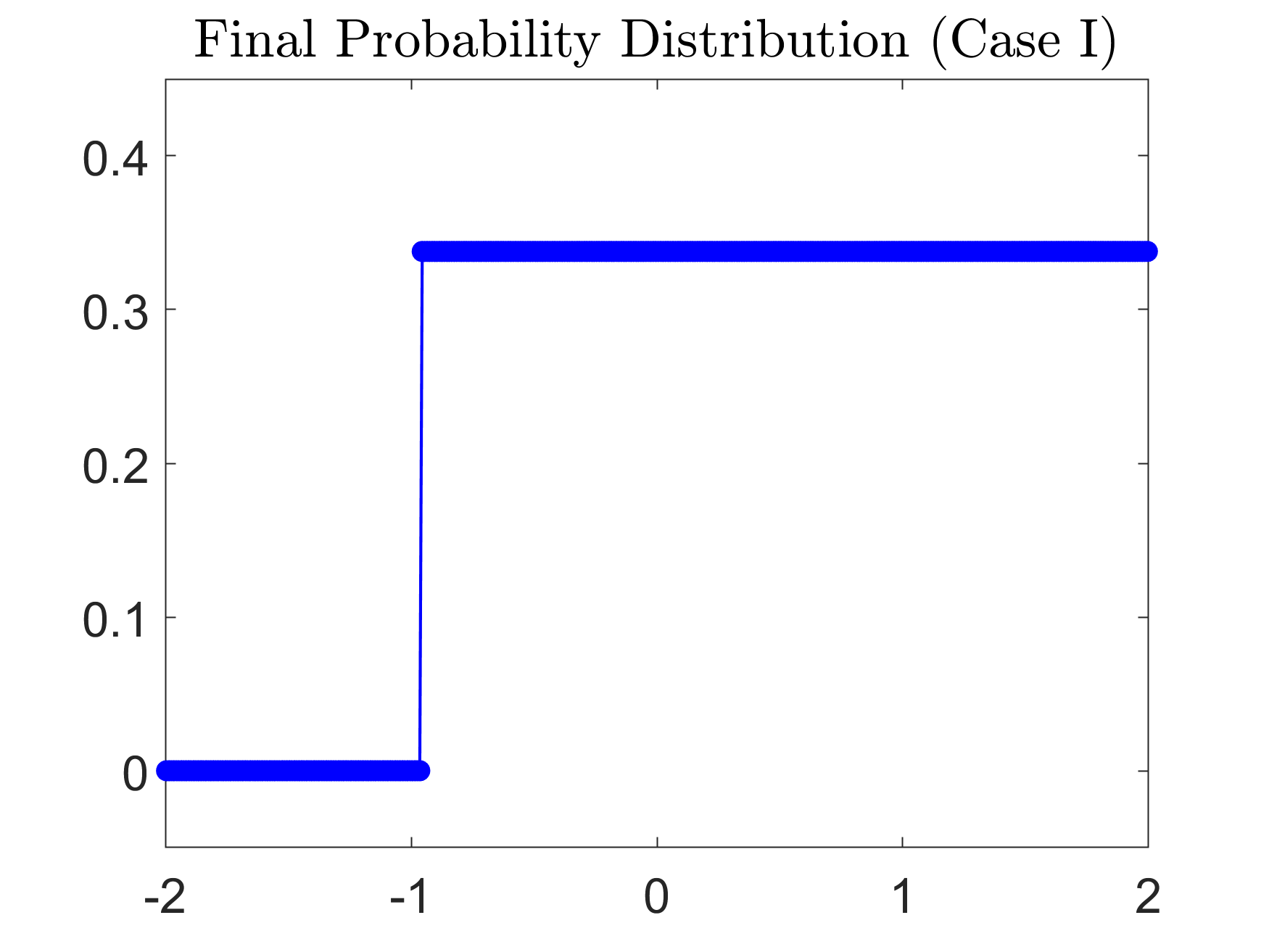}\hspace*{1cm}
            \includegraphics[trim=0.9cm 0.4cm 1.2cm 0.2cm, clip, width=6.cm]{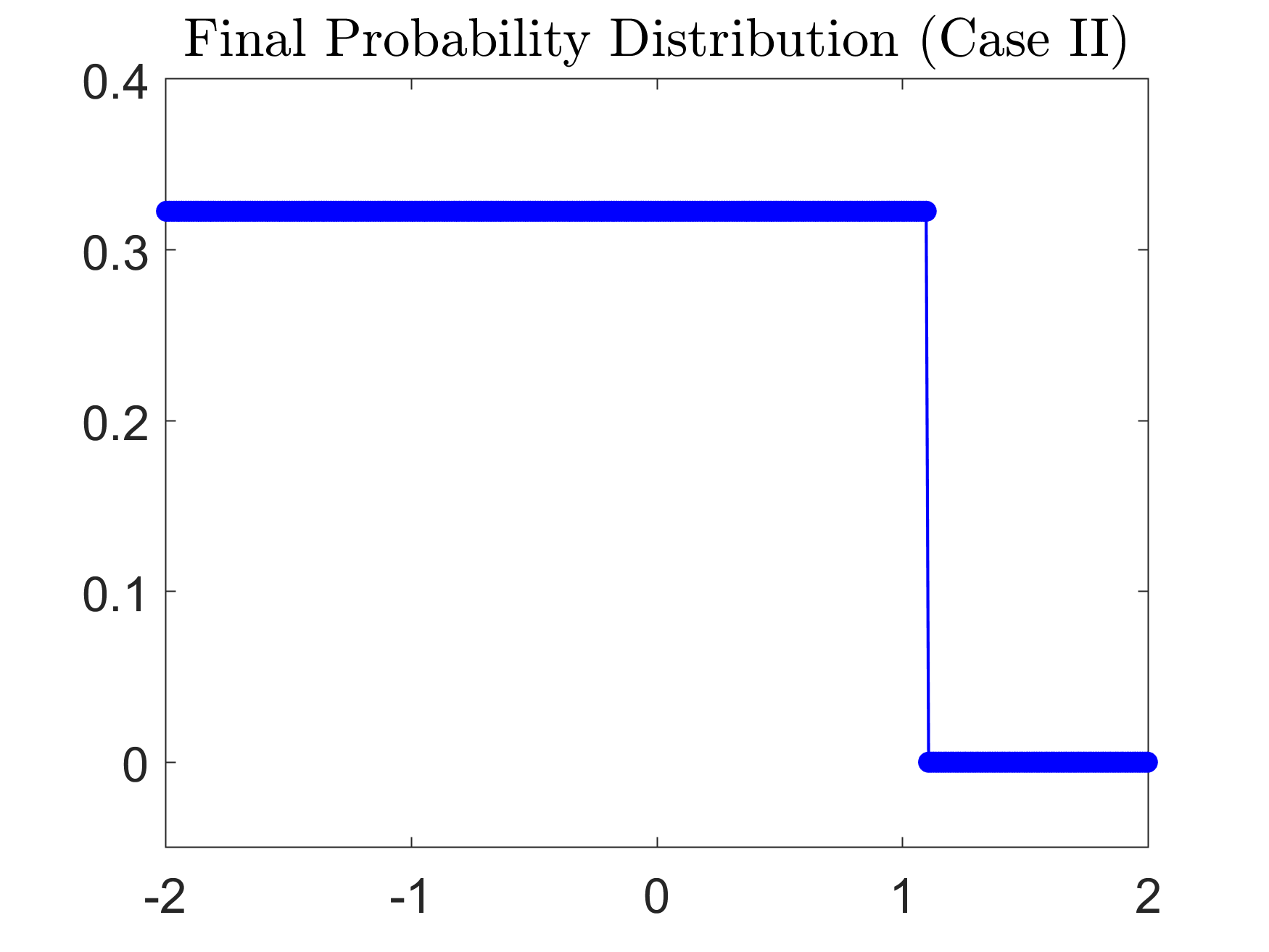}}
\caption{\sf Example 4: Final probability distributions for Cases I (left) and II (right).\label{fig11}}
\end{figure}

We also plot the three-dimensional numerical results for Cases I and II over time in Figure \ref{fig13}, where one can see that the stable domains are $(-1, 2)$ and $(-2, 1)$ for the Cases I and II, respectively, which further supports the proposed methodology for the two cases.

\begin{figure}[ht!]
\centerline{\includegraphics[trim=0.1cm 0.2cm 0.8cm 0.2cm, clip, width=7.cm]{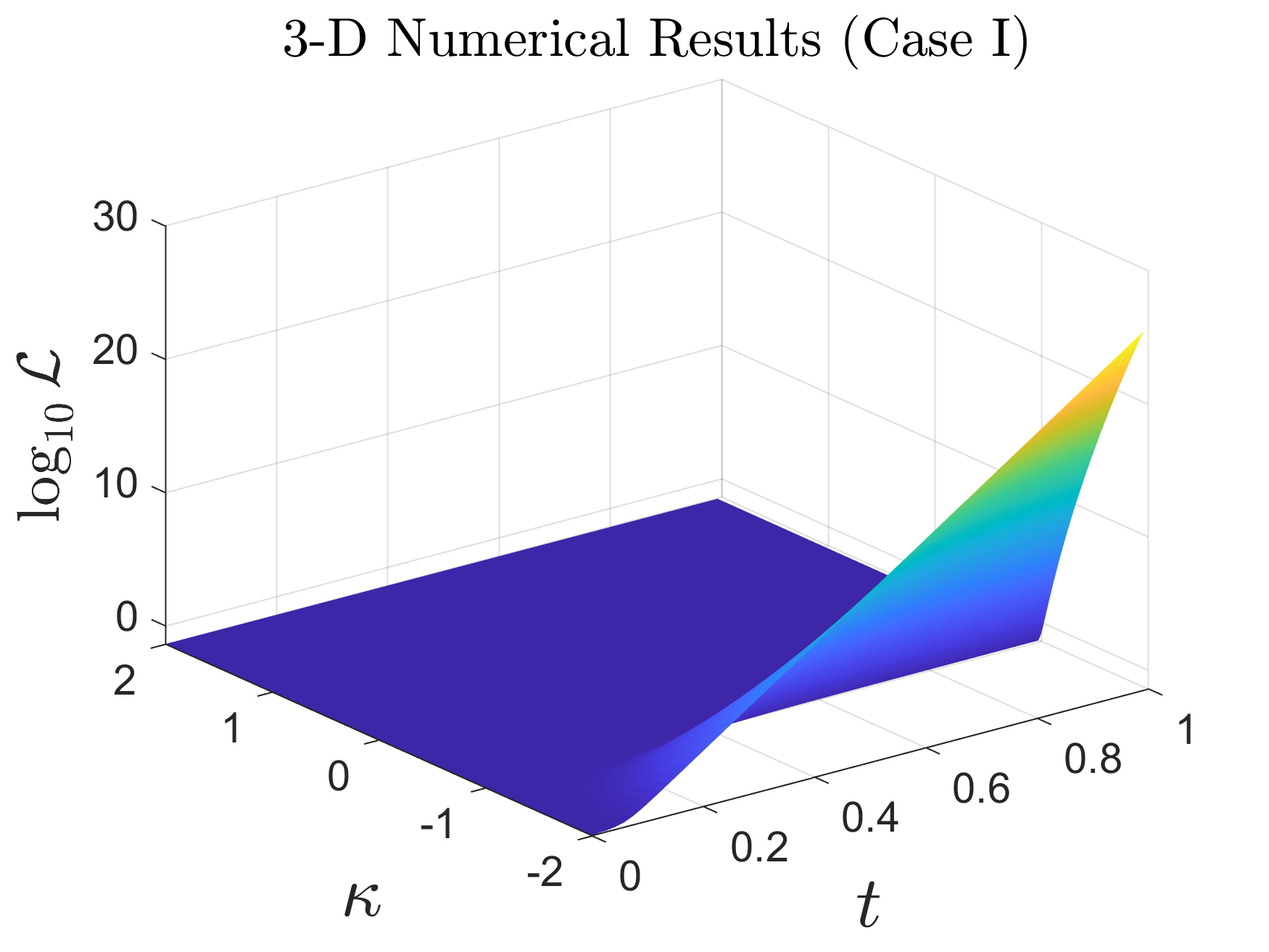}\hspace*{1cm}
            \includegraphics[trim=0.1cm 0.2cm 0.8cm 0.2cm, clip, width=7.cm]{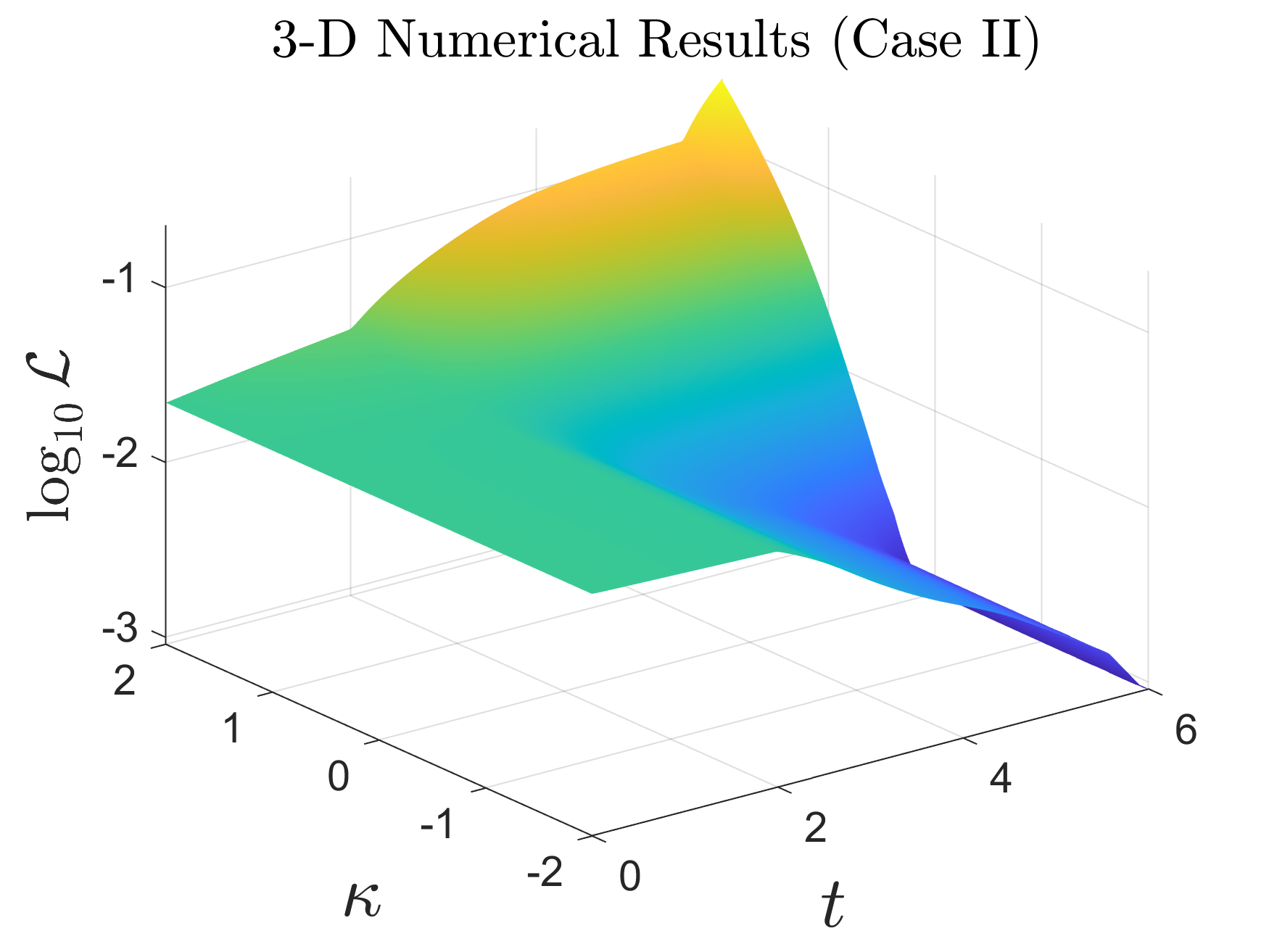}}
\caption{\sf Example 4: Evolution of $\log_{10}{\cal L}$ for Cases I (left) and II (right).\label{fig13}}
\end{figure}

\paragraph{Example 5--- 2-D Burgers Equation.} In this example, we consider the 2-D Burgers equation
\begin{equation*}
  u_t +\Big(\frac{u^2}{2} \Big)_x+\Big(\frac{u^2}{2} \Big)_y= -0.2 u
\end{equation*}
with the initial conditions:
$$
\begin{aligned}
&{\rm Case \,\,I:} \quad u(x,y,0)=\begin{cases}
          0.3, & \mbox{if } x<0.3, \\
          0.2, & \mbox{if } 0.3 \le x \le 0.7,  \\
         -0.1, & \mbox{otherwise},
       \end{cases}\\[1.ex] 
&{\rm Case \,\,II:} \quad u(x,y,0)=\begin{cases}
          0.1, & \mbox{if } x<0.3, \\
          0.2, & \mbox{if } 0.3 \le x \le 0.7,  \\
          0.1, & \mbox{otherwise},
       \end{cases}
       \end{aligned}
$$
with the following boundary conditions:
\begin{equation}\label{3.6a}
\hspace{-0.1cm}\begin{aligned}
&\begin{cases}
  u(0-,y,t)=\kappa u(1-,y,t), & \mbox{$u(0+,y,t)>0$,} \\
  u(0-,y,t)= u(0+,y,t),       & \mbox{$u(0+,y,t)\le 0$}, 
\end{cases}\quad 
\begin{cases}
  u(1+,y,t)=\kappa u(0+,y,t), & \mbox{$u(1-,y,t)<0$,} \\
  u(1+,y,t)= u(1-,y,t),       & \mbox{$u(1-,y,t)\ge0$},  
\end{cases}\\
&\begin{cases}
  u(x,0-,t)=\kappa u(x,1-,t), & \mbox{$u(x,0+,t)>0$,} \\
  u(x,0-,t)= u(x,0+,t),       & \mbox{$u(x,0+,t)\le 0$}, 
\end{cases}\quad 
\begin{cases}
  u(x,1+,t)=\kappa u(x,0+,t), & \mbox{$u(x,1-,t)<0$,} \\
  u(x,1+,t)= u(x,1-,t),       & \mbox{$u(x,1-,t)\ge0$},  
\end{cases}
\end{aligned} 
\end{equation}
subject to the computational domain $[0,1]\times[0,1]$. The initial data are chosen to be $x$-dependent in order to compare with Example 4, while the evolution and feedback treatment are fully two-dimensional. For simplicity, and in order to keep the feedback parameter space one-dimensional, the same feedback parameter $\kappa$ is used on all four sides of the computational domain.

We compute the numerical results for Cases I and II on a uniform mesh with $N_x = N_y=200$ and $N_\kappa = 400$, with the initial probability distribution defined on $\kappa \in [-2,2]$ by ${\cal P}(\kappa) = 1/4$. The values of the probability distributions are updated based on the discrete energy 
\begin{equation*}
  {\cal L}(\xbar u)=\dx \dy \sum_{j=1}^{N_x} \sum_{k=1}^{N_y} \xbar u^2_{j,k},
\end{equation*}
and the obtained numerical results are presented in Figure \ref{fig11a}. One can see that for the Burgers equation with the boundary conditions defined in \eref{3.6a}, the stable domain of the parameter $\kappa$ depends on the initial conditions, and the stable domains are approximately $(-1.4, 2)$ and $(-2, 1.6)$ for Cases I and II, respectively. 

\begin{figure}[ht!]
\centerline{\includegraphics[trim=0.9cm 0.4cm 1.2cm 0.2cm, clip, width=6.cm]{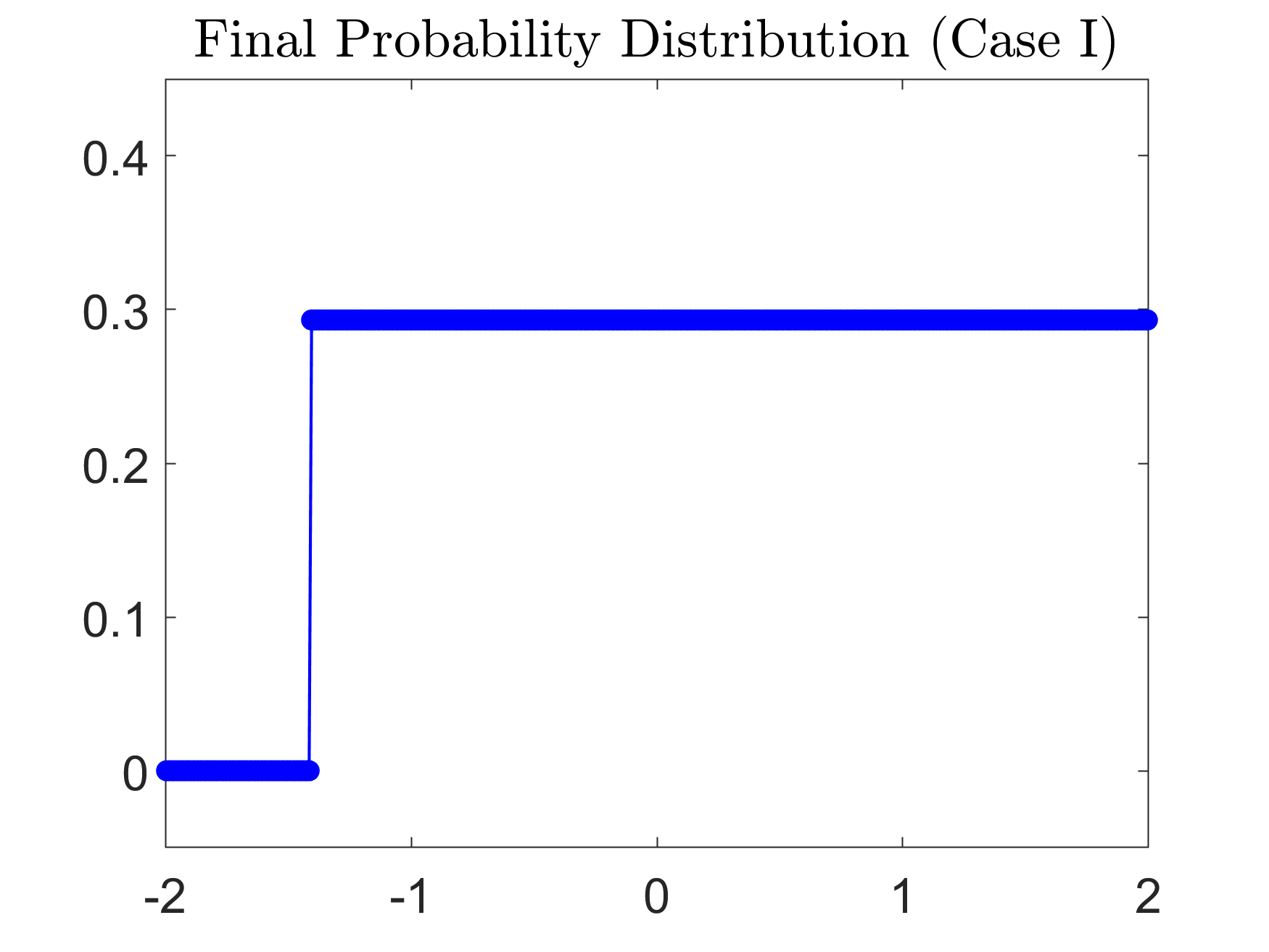}\hspace*{1cm}
            \includegraphics[trim=0.9cm 0.4cm 1.2cm 0.2cm, clip, width=6.cm]{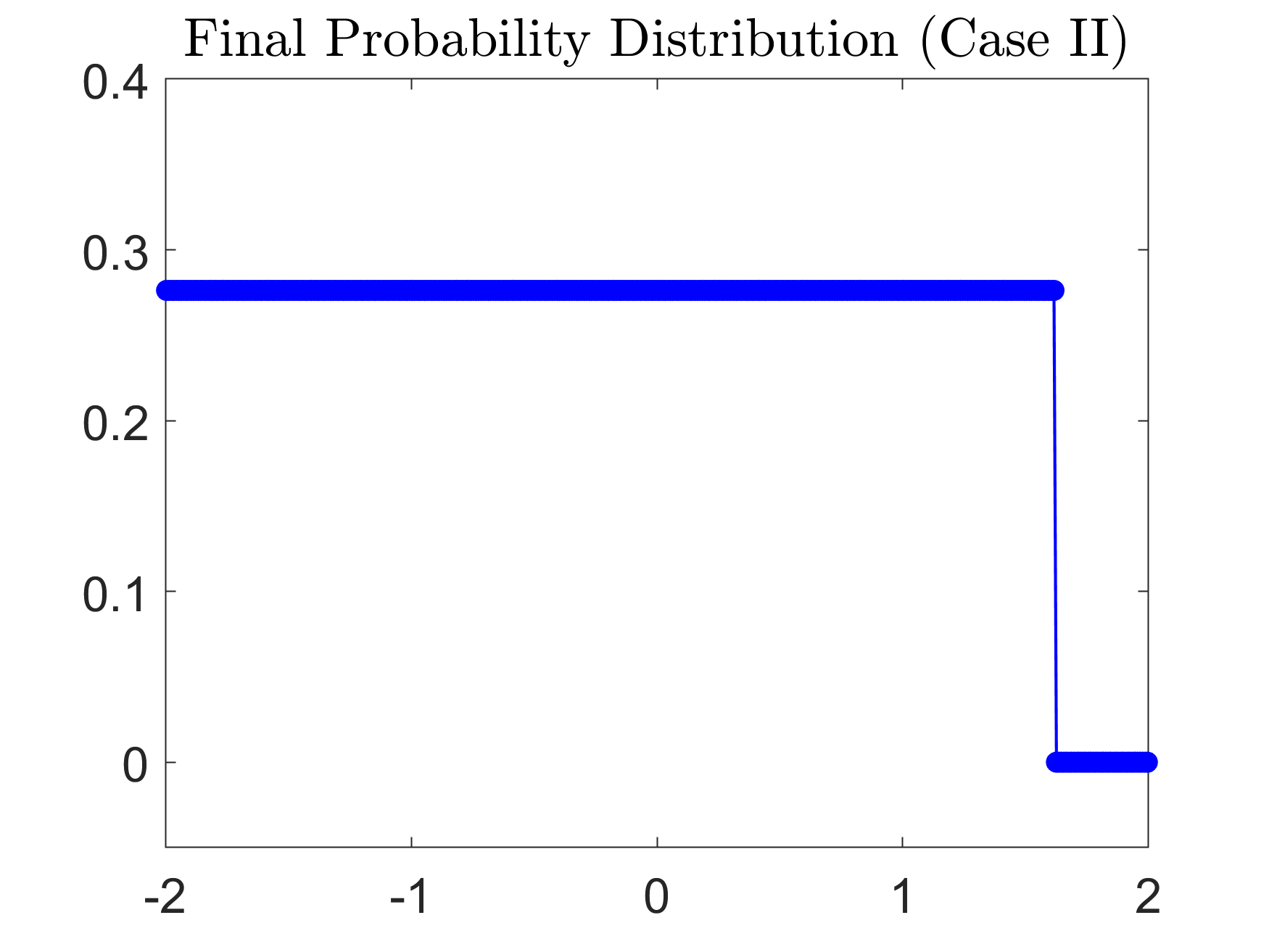}}
\caption{\sf Example 5: Final probability distributions for Cases I (left) and II (right).\label{fig11a}}
\end{figure}

We also plot the three-dimensional numerical results for Cases I and II over time in Figure \ref{fig13a}, where one can see that the stable domains are approximately $(-1.4, 2)$ and $(-2, 1.6)$ for the Cases I and II, respectively, which further supports the proposed methodology for the two cases. The results show that the method can be applied beyond one-dimensional test problems.

\begin{figure}[ht!]
\centerline{\includegraphics[trim=0.1cm 0.2cm 0.8cm 0.2cm, clip, width=7.cm]{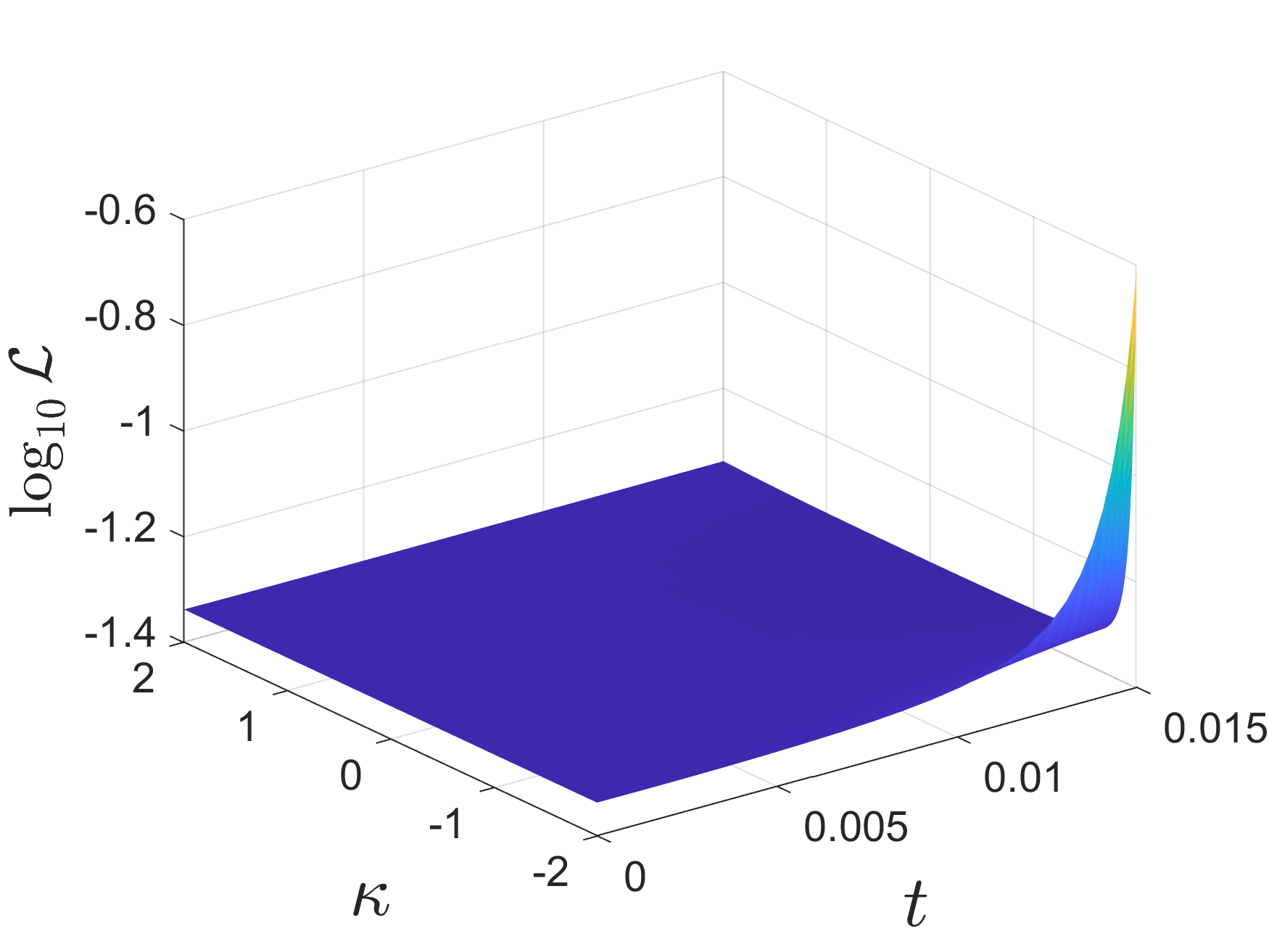}\hspace*{1cm}
            \includegraphics[trim=0.1cm 0.2cm 0.8cm 0.2cm, clip, width=7.cm]{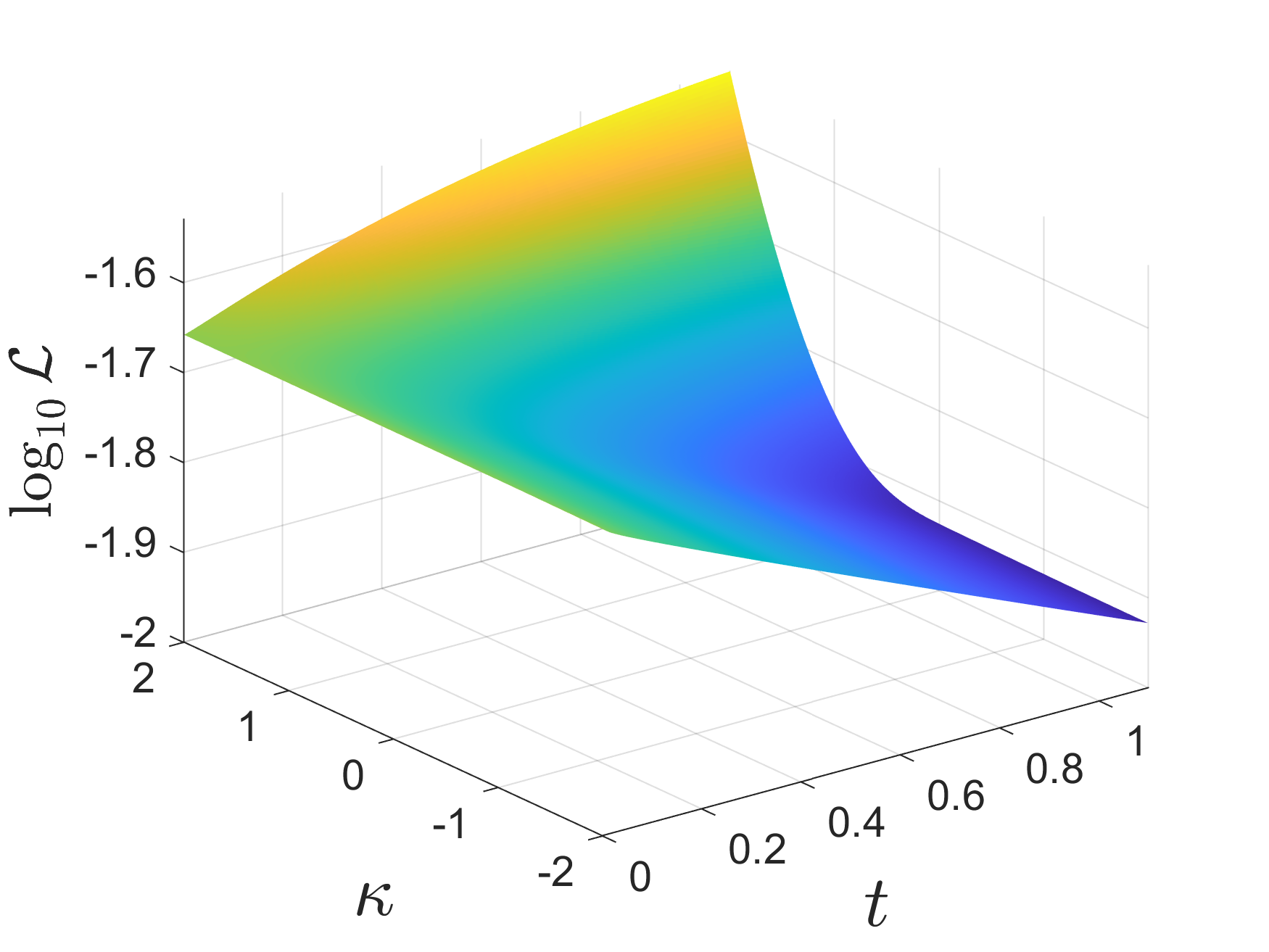}}
\caption{\sf Example 5: Evolution of $\log_{10}{\cal L}$ for Cases I (left) and II (right).\label{fig13a}}
\end{figure}

\subsection{Stochastic Examples}
In this section, we consider two stochastic examples to demonstrate that the proposed method works well for stochastic systems. At the same time, the stable domain for the stochastic linear wave equations computed by the proposed method is consistent with the theoretical validations in \cite{CHK_Stability}.

\paragraph*{Example 6---Stochastic Linear Wave Equations.}
In this example, we consider the linear wave equations \eref{3.1a} with the following initial conditions:
\begin{equation}\label{3.10}
u^{(1)}(x,0)=\frac{1}{4}-\frac{1}{2}\xi, \quad  u^{(2)}(x,0)=-\frac{1}{4}+\frac{1}{2}\xi.    
\end{equation}
The boundary conditions are given by 
\begin{equation}\label{3.10a}
u^{(1)}(0,t,\xi)= \kappa  u^{(1)}(1,t,\xi), \quad u^{(2)}(1,t,\xi)= \kappa  u^{(2)}(0,t,\xi),
\end{equation}
where $\xi$ is the random variable and $\xi \in [-0.5, 0.5]$.

We take the same initial probability distributions of the parameter $\kappa$ as in Example 1 and compute the numerical results on a uniform mesh with $N_x=100$, $N_\kappa=800$, and $N_\xi=100$. The obtained numerical results are presented in Figure \ref{fig14}. Here, the values of the probability distributions are updated based on the corresponding discrete energy 
\begin{equation}\label{3.9a}
{\cal L}\big(\xbar u^{(1)}, \xbar u^{(2)}\big)=\dx \dxi\sum_{j=1}^{N_x}\sum_{k=1}^{N_\xi} \bigg[  \Big( u^{(1)}_{j,k} \Big)^2+  \Big( u^{(2)}_{j,k} \Big)^2 \bigg].
\end{equation}
As one can see, the stability region of the boundary control parameter is approximately $(-1,1)$, which is consistent with the one proved in \cite[Theorem 4.1]{CHK_Stability}, which once again demonstrates the validity and robustness of the proposed method.

\begin{figure}[ht!]
\centerline{\includegraphics[trim=0.9cm 0.4cm 1.2cm 0.2cm, clip, width=6.cm]{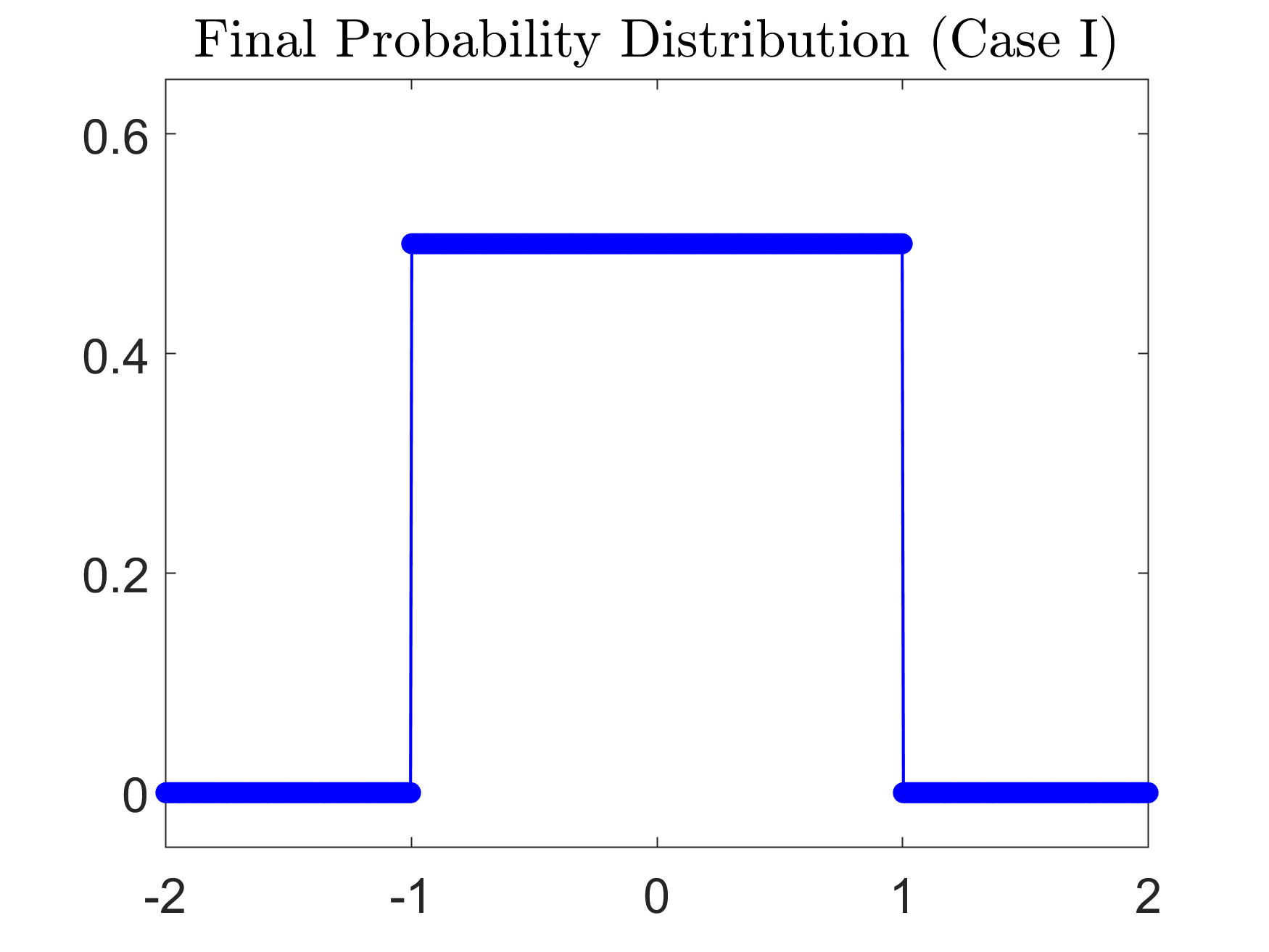}\hspace*{1cm}
            \includegraphics[trim=0.9cm 0.4cm 1.2cm 0.2cm, clip, width=6.cm]{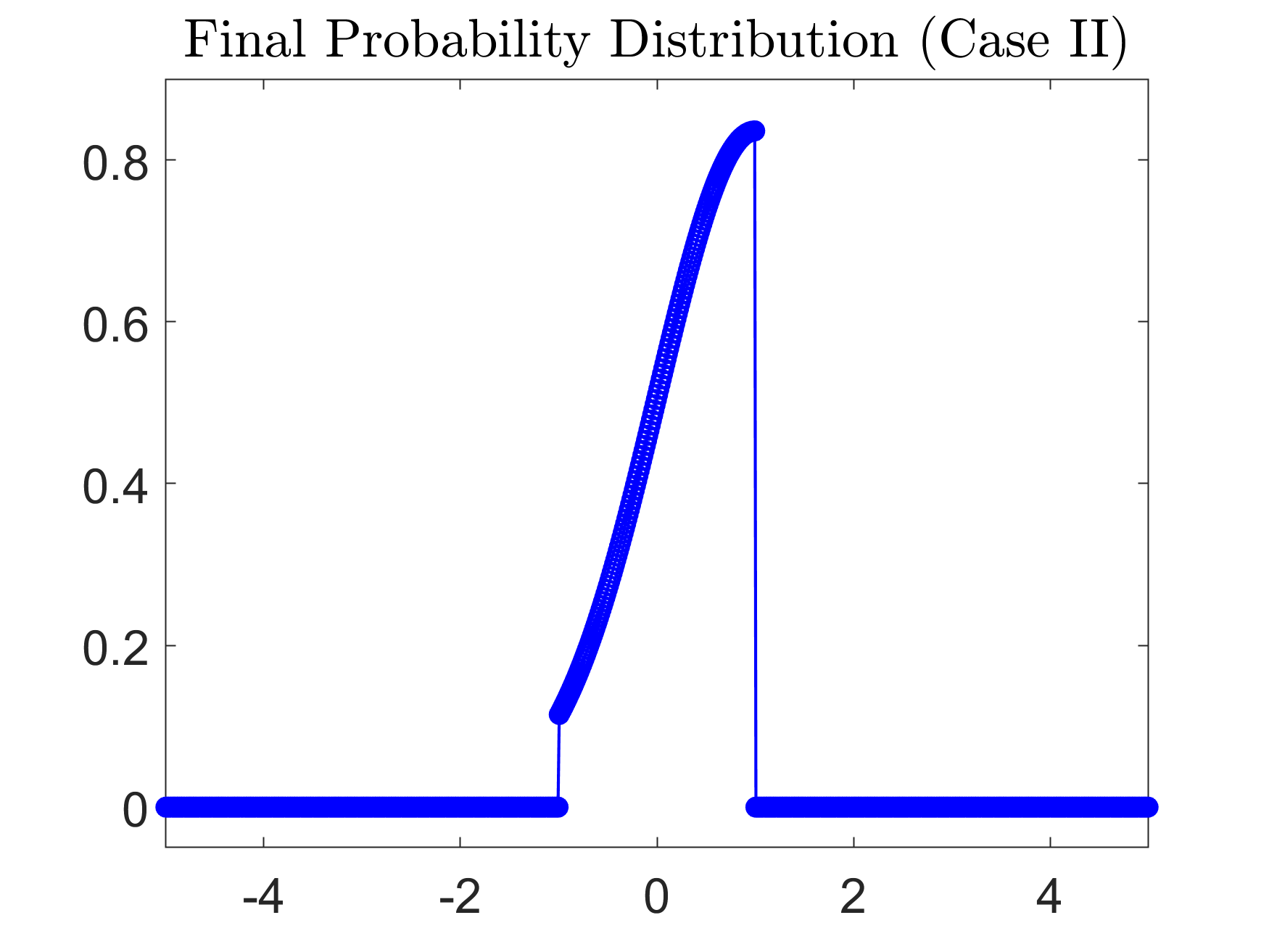}}
\caption{\sf Example 6: Final probability distributions for Cases I (left) and II (right).\label{fig14}}
\end{figure}

\begin{rmk}
In this example, the numerical solutions of the system \eref{3.1a}, \eref{3.10}--\eref{3.10a} are also computed by the first-order LLF scheme; see Appendix \ref{appa}.  
\end{rmk}

\paragraph*{Example 7---Stochastic Burgers Equation.}
In this example, we consider the Burgers equation \eref{3.5} with the following initial conditions:
\begin{equation*}
\begin{aligned}
&{\rm Case \,\,I:} \,\, u(x,0,\xi)=\begin{cases}
          0.3+\xi, & \mbox{if } x<0.3, \\
          0.2, & \mbox{if } 0.3 \le x \le 0.7,  \\
         -0.1, & \mbox{otherwise},
       \end{cases}\\[1.ex] 
&{\rm Case \,\,II:} \,\, u(x,0,\xi)=\begin{cases}
          0.1, & \mbox{if } x<0.3, \\
          0.2+0.1\xi, & \mbox{if } 0.3 \le x \le 0.7,  \\
          0.1, & \mbox{otherwise}.
       \end{cases}  
\end{aligned}
\end{equation*}
The boundary conditions are given by 
\begin{equation*}
\begin{cases}
  u(0-,t,\xi)=\kappa u(1-,t,\xi), & \mbox{if $u(0+,t,\xi)>0$,} \\
  u(0-,t,\xi)= u(0+,t,\xi),       & \mbox{otherwise}, 
\end{cases}\qquad 
\begin{cases}
  u(1+,t,\xi)=\kappa u(0+,t,\xi), & \mbox{if $u(1-,t,\xi)<0$,} \\
  u(1+,t,\xi)= u(1-,t,\xi),       & \mbox{otherwise},
\end{cases}
\end{equation*}
where $\xi$ is the random variable and $\xi \in [-0.5, 0.5]$.

We take the same initial probability distributions as in Example 4 and compute the numerical results for the Cases I and II on a uniform mesh with $N_x=200$, $N_\kappa=400$, and $N_\xi=100$. The values of the probability distributions are updated based on the discrete energy 
\begin{equation*}
  {\cal L}(\xbar u)=\dx \dxi\sum_{j=1}^{N_x}\sum_{k=1}^{N_\xi} u^2_{j,k},
\end{equation*}
and the obtained numerical results are presented in Figure \ref{fig16}. One can see that for the Burgers equation with stochastic initial conditions, the stable domains are 
similar to the ones reported in Example 4.

\begin{figure}[ht!]
\centerline{\includegraphics[trim=0.9cm 0.4cm 1.2cm 0.2cm, clip, width=6.cm]{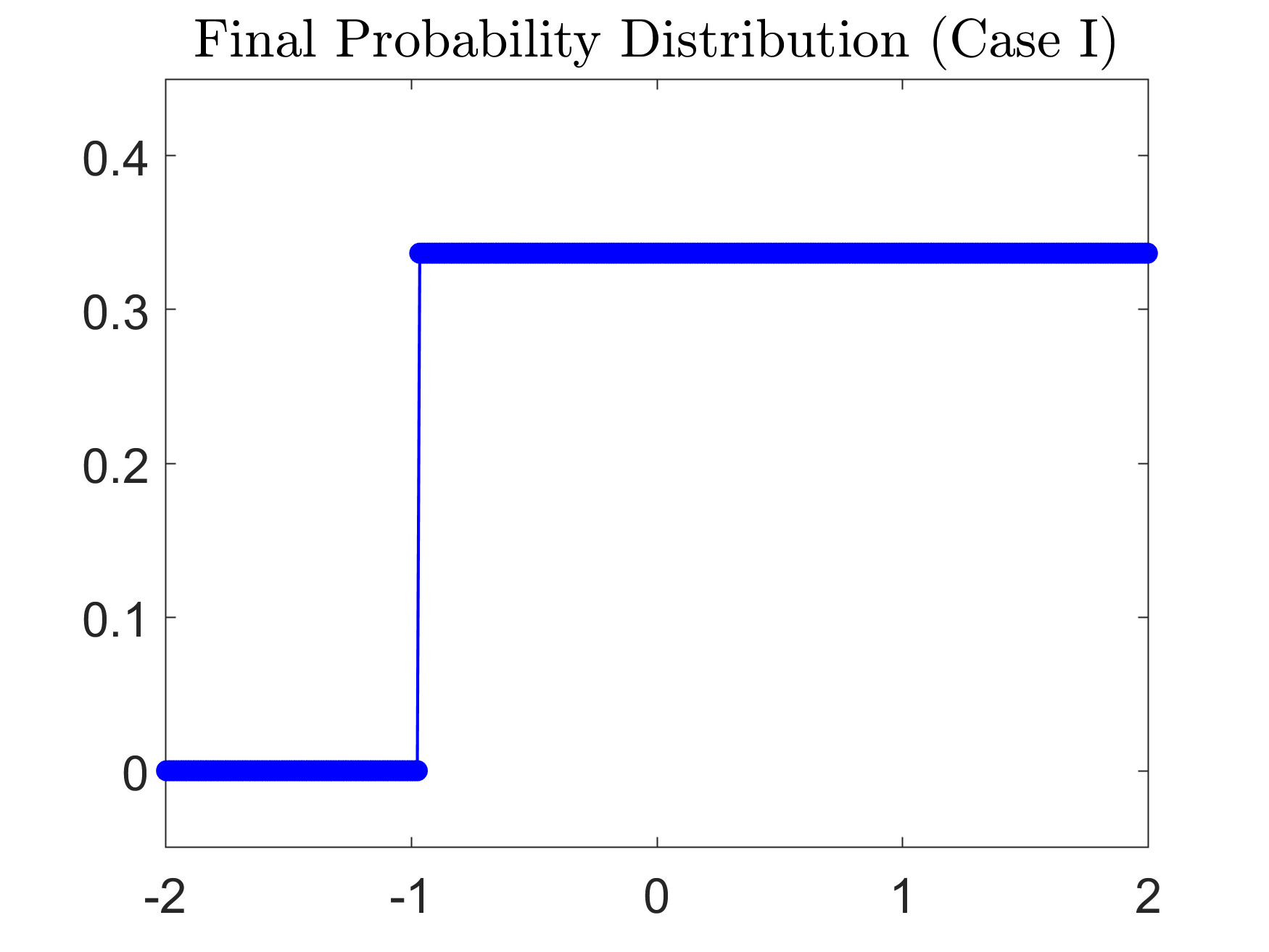}\hspace*{1cm}
            \includegraphics[trim=0.9cm 0.4cm 1.2cm 0.2cm, clip, width=6.cm]{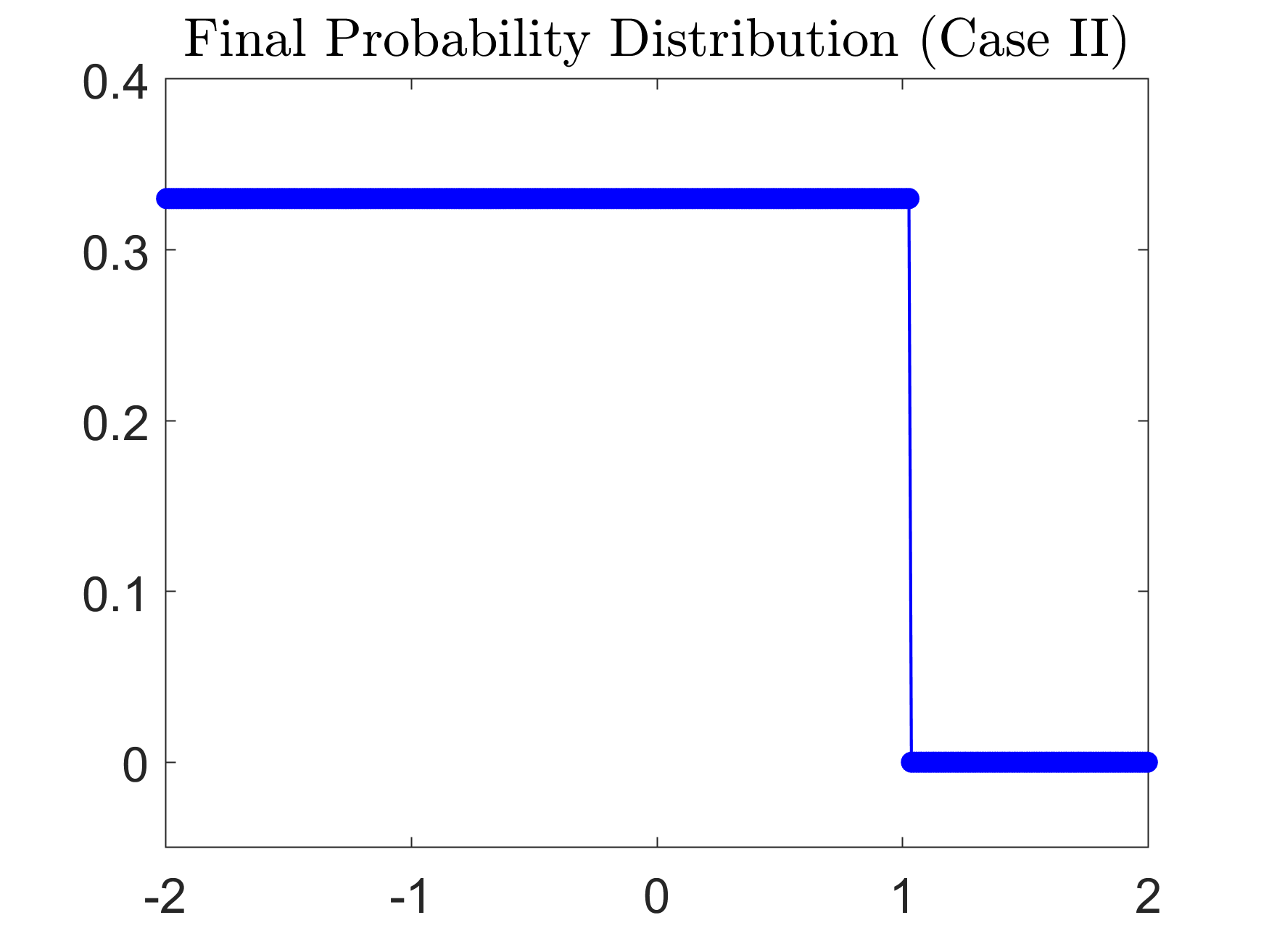}}
\caption{\sf Example 7: Final probability distributions for Cases I (left) and II (right).\label{fig16}}
\end{figure}

\subsection{Nonconservative System}
In this section, we consider a nonconservative system and show that the stability domain computed by the proposed method is consistent with the theoretical results in \cite{CHK_Stability}.

\paragraph*{Example 8---Linearized Saint-Venant System with Source Terms.}
In this example, we consider the boundary damping for the linearized Saint-Venant system with source terms. We take the same setting as in Example 2, but with different equations, which read as
\begin{equation*}
\begin{pmatrix}u^{(1)}\\u^{(2)}\end{pmatrix}_t+\begin{pmatrix}\Lambda_1&0\\0&\Lambda_2\end{pmatrix}
\begin{pmatrix}u^{(1)}\\u^{(2)}\end{pmatrix}_x=-\begin{pmatrix}0.1&0\\0&0.1\end{pmatrix}\begin{pmatrix}u^{(1)}\\u^{(2)}\end{pmatrix},
\end{equation*}
where $u^{(1)}$ and $u^{(2)}$ are defined by \eref{4.5a}.

We compute the numerical results on a uniform mesh with $N_x=100$ and $N_\kappa=800$. The values of the probability distributions are updated based on the discrete energy \eref{3.1}, and the obtained numerical results are presented in Figure \ref{fig24}.  One can clearly see that the stability region of the boundary control parameter is consistent with the theoretical results in \cite{CHK_Stability}.

\begin{figure}[ht!]
\centerline{\includegraphics[trim=0.9cm 0.4cm 1.2cm 0.2cm, clip, width=6.cm]{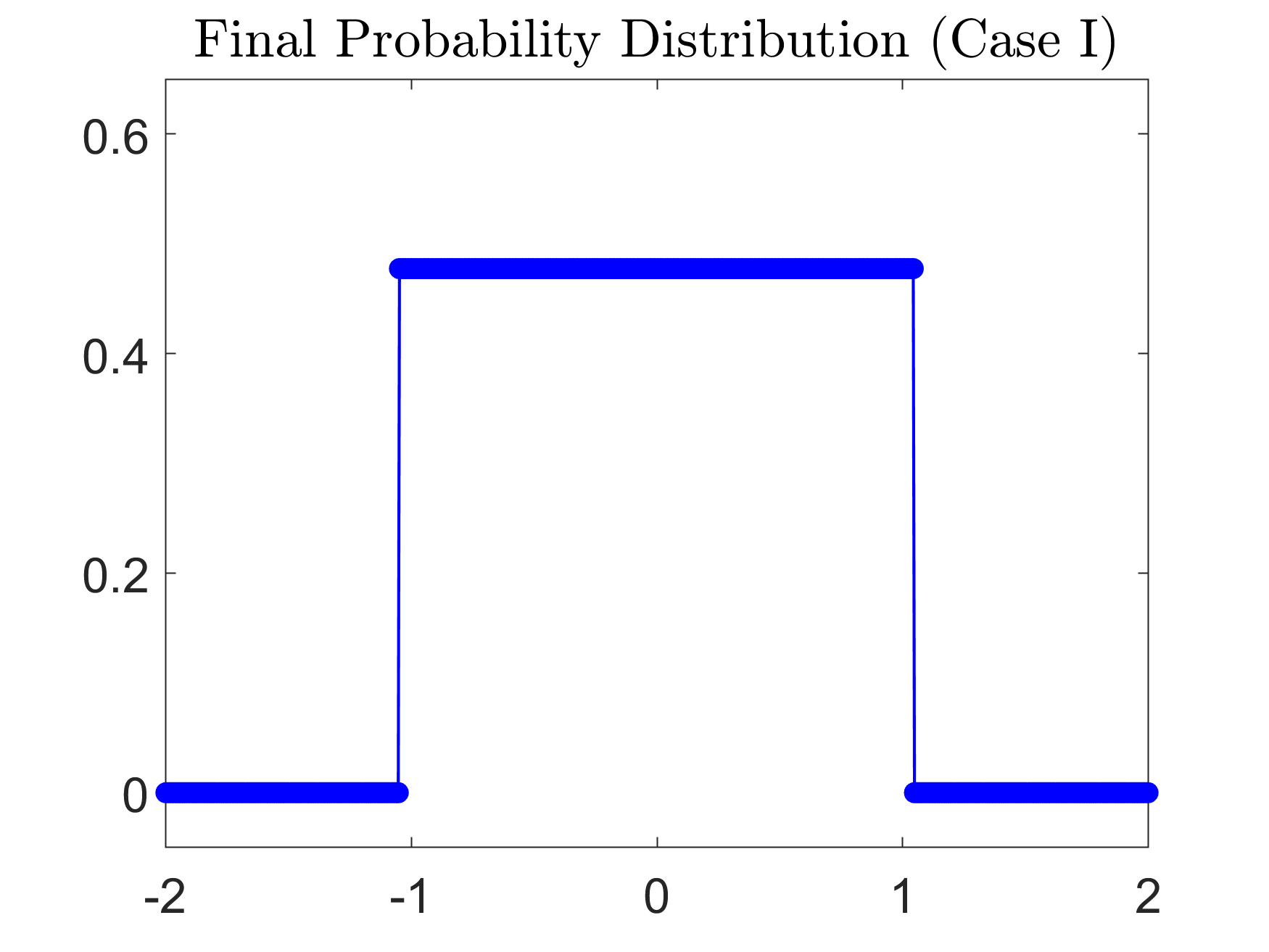}\hspace*{1cm}
            \includegraphics[trim=0.9cm 0.4cm 1.2cm 0.2cm, clip, width=6.cm]{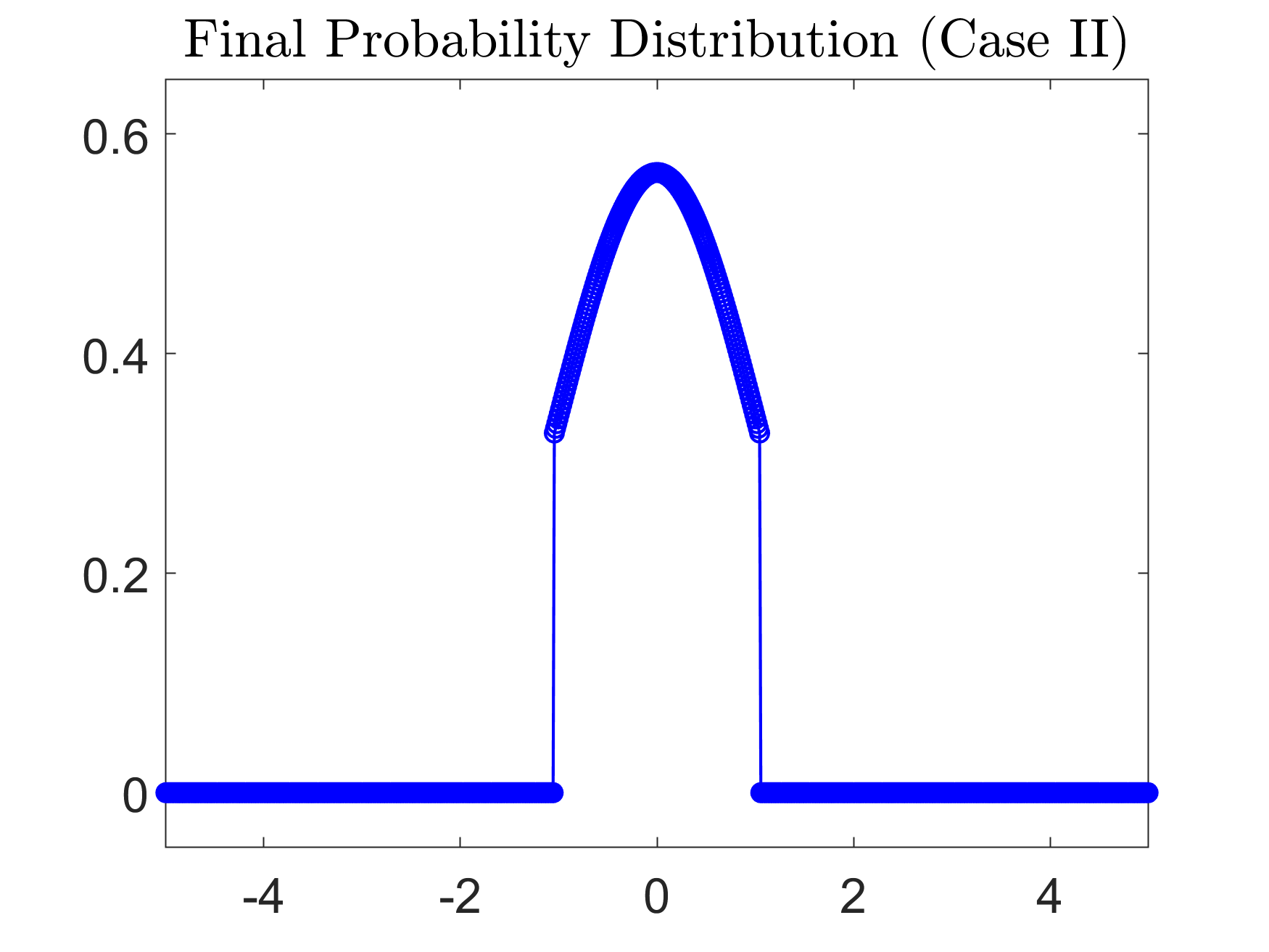}}
\caption{\sf Example 8: Final probability distributions of Case I (left) and II (right).\label{fig24}}
\end{figure}

\paragraph*{Example 9--- Linearized Stochastic Saint-Venant System with Source Terms.}
In this example, we take the same setting as in Example 7, but with the following initial conditions:
\begin{equation*}
	\delta h(x,0,\xi)=\frac{1}{2}\sin(\pi x)(-\frac{1}{2}+\xi), \quad \delta v(x,0,\xi)=\frac{20}{8+\sin(\pi x)(-\frac{1}{2}+\xi)}-\frac{5}{2}, \quad \xi \in [-\frac{1}{2}, \frac{1}{2}],
\end{equation*}
subject to the boundary conditions 
$$
u^{(1)}(0,t,\xi) = \kappa \, u^{(1)}(1,t,\xi), \quad  {\rm and} \quad  u^{(2)}(1,t,\xi)  =\kappa\, u^{(2)}(0,t,\xi),
$$ 
where $\xi$ is the random variable and $\xi \in [-0.5, 0.5]$.

We compute the numerical results on a uniform mesh with $N_x=100$, $N_\kappa=800$, and $N_\xi=200$. The values of the probability distributions are updated based on the discrete energy \eref{3.9a}, and the obtained numerical results are presented in Figure \ref{fig25}.  One can clearly see that the stability region of the boundary control parameter is approximately $(-1,1)$, which is consistent with the theoretical results in \cite{CHK_Stability}.

\begin{figure}[ht!]
\centerline{\includegraphics[trim=0.9cm 0.4cm 1.2cm 0.2cm, clip, width=6.cm]{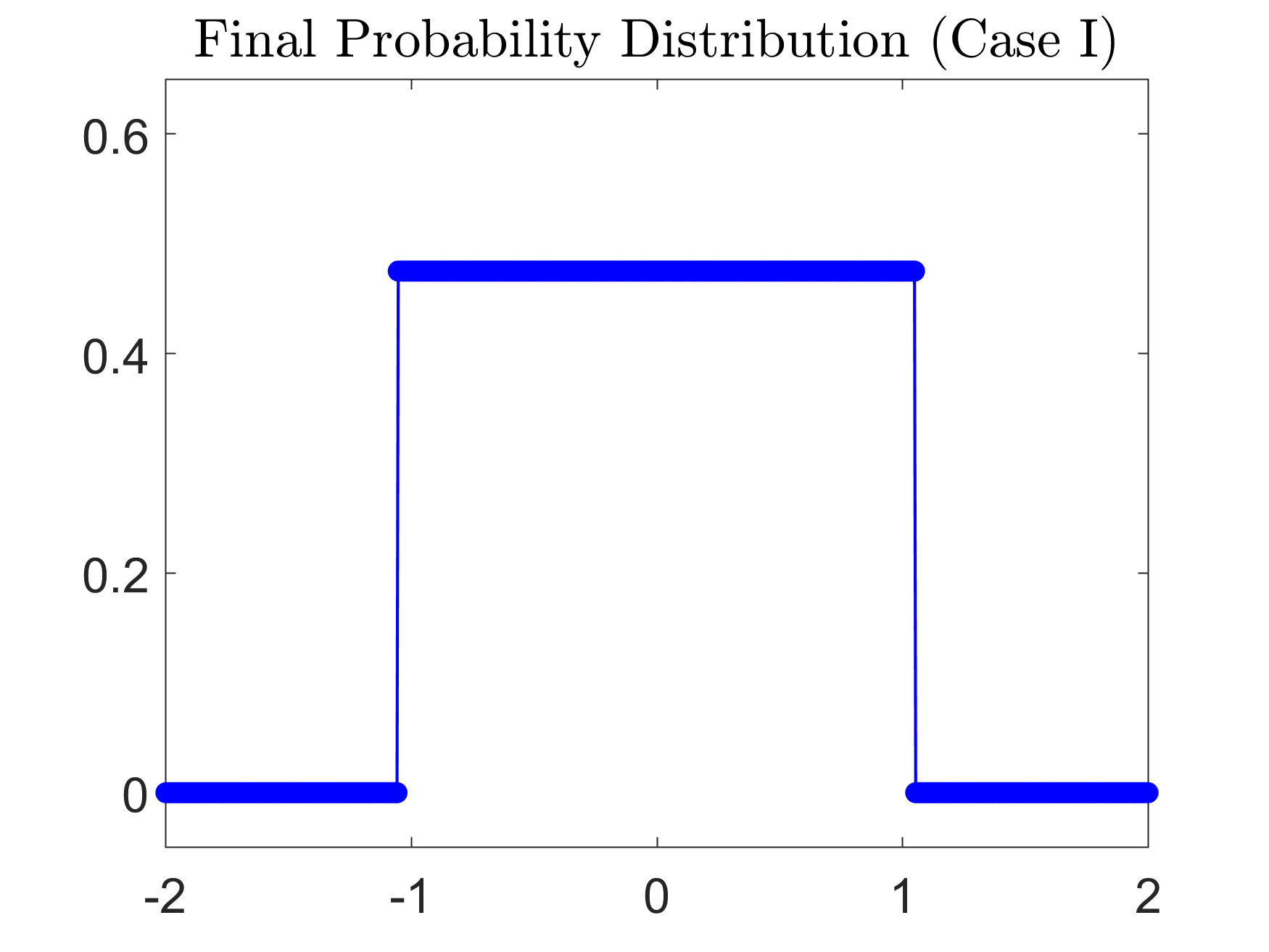}\hspace*{1cm}
            \includegraphics[trim=0.9cm 0.4cm 1.2cm 0.2cm, clip, width=6.cm]{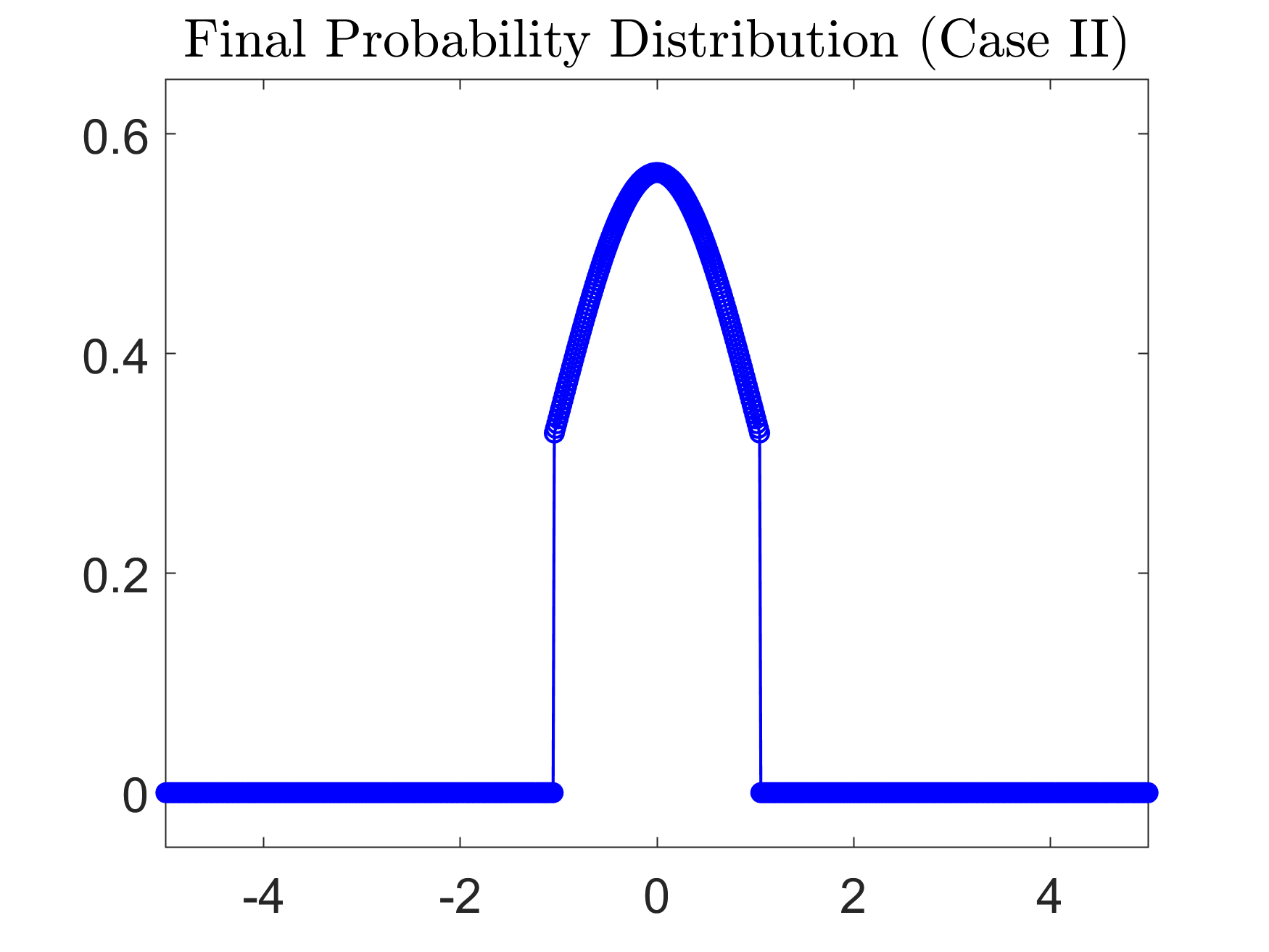}}
\caption{\sf Example 9: Final probability distributions of Case I (left) and II (right).\label{fig25}}
\end{figure}

\subsection{Second-Order Discretization}
We next employ a second-order semi-discrete LLF finite-volume scheme with piecewise linear reconstruction and SSP-RK3 time integration; see Appendix \ref{appb}. The resulting stability domains coincide with those obtained by the first-order LLF scheme.

\paragraph*{Example 10---Burgers Equation.}
In this example, we take the same setting as in Example 4, but compute the numerical results using the second-order LLF scheme. The obtained numerical results are plotted in Figure \ref{fig19}, where one can see that, as in the previous example, the computed stability domains coincide with those in Example 4. 
\begin{figure}[ht!]
\centerline{\includegraphics[trim=0.9cm 0.4cm 1.2cm 0.2cm, clip, width=6.cm]{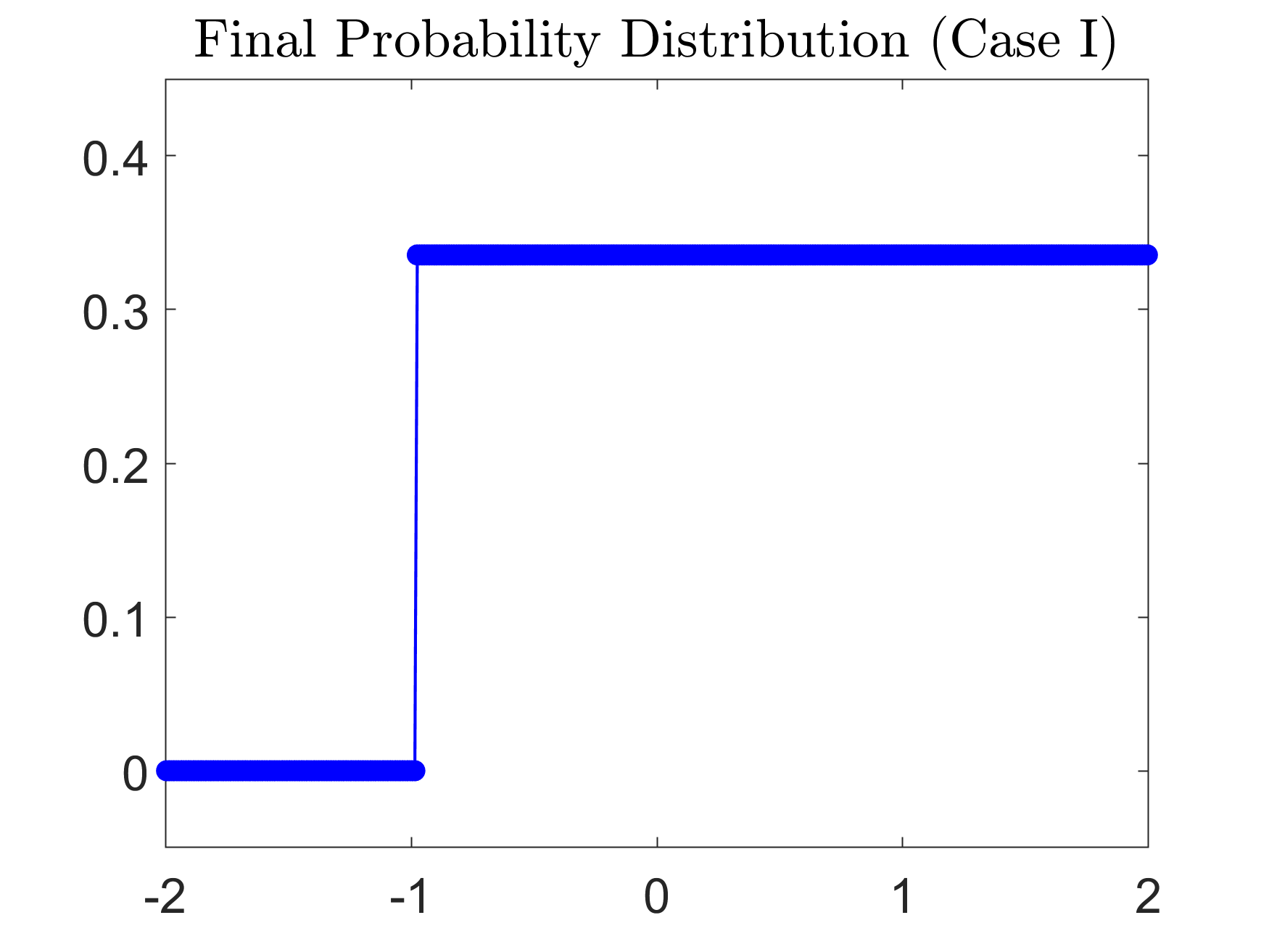}\hspace*{1cm}
            \includegraphics[trim=0.9cm 0.4cm 1.2cm 0.2cm, clip, width=6.cm]{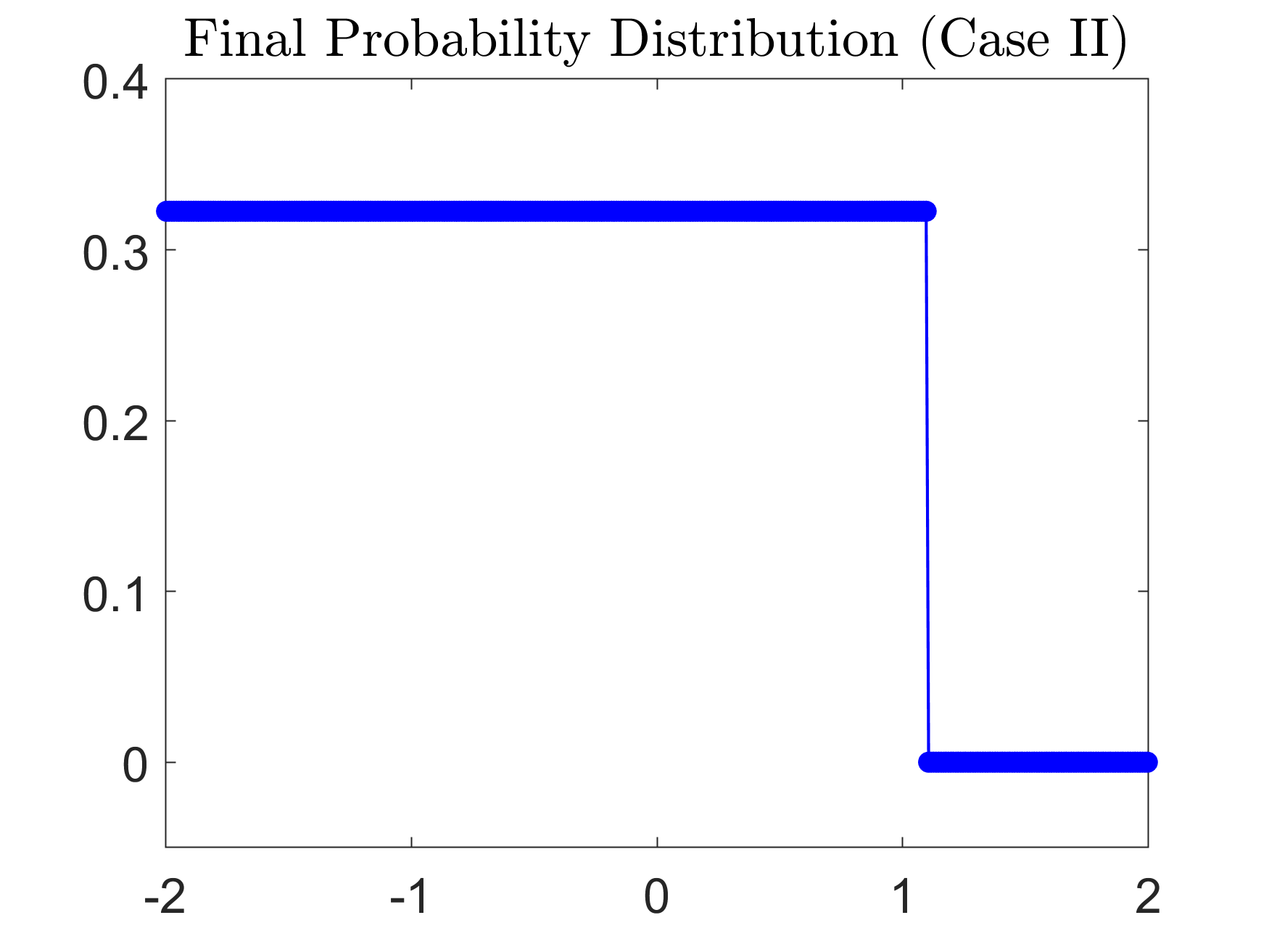}}
\caption{\sf Example 10: Final probability distributions of Case I (left) and II (right).\label{fig19}}
\end{figure}

We also plot the numerical results for Cases I and II over time in Figure \ref{fig20}, which validate the stable domains of the two cases for the studied system. 
\begin{figure}[ht!]
\centerline{\includegraphics[trim=0.1cm 0.2cm 0.8cm 0.2cm, clip, width=7.cm]{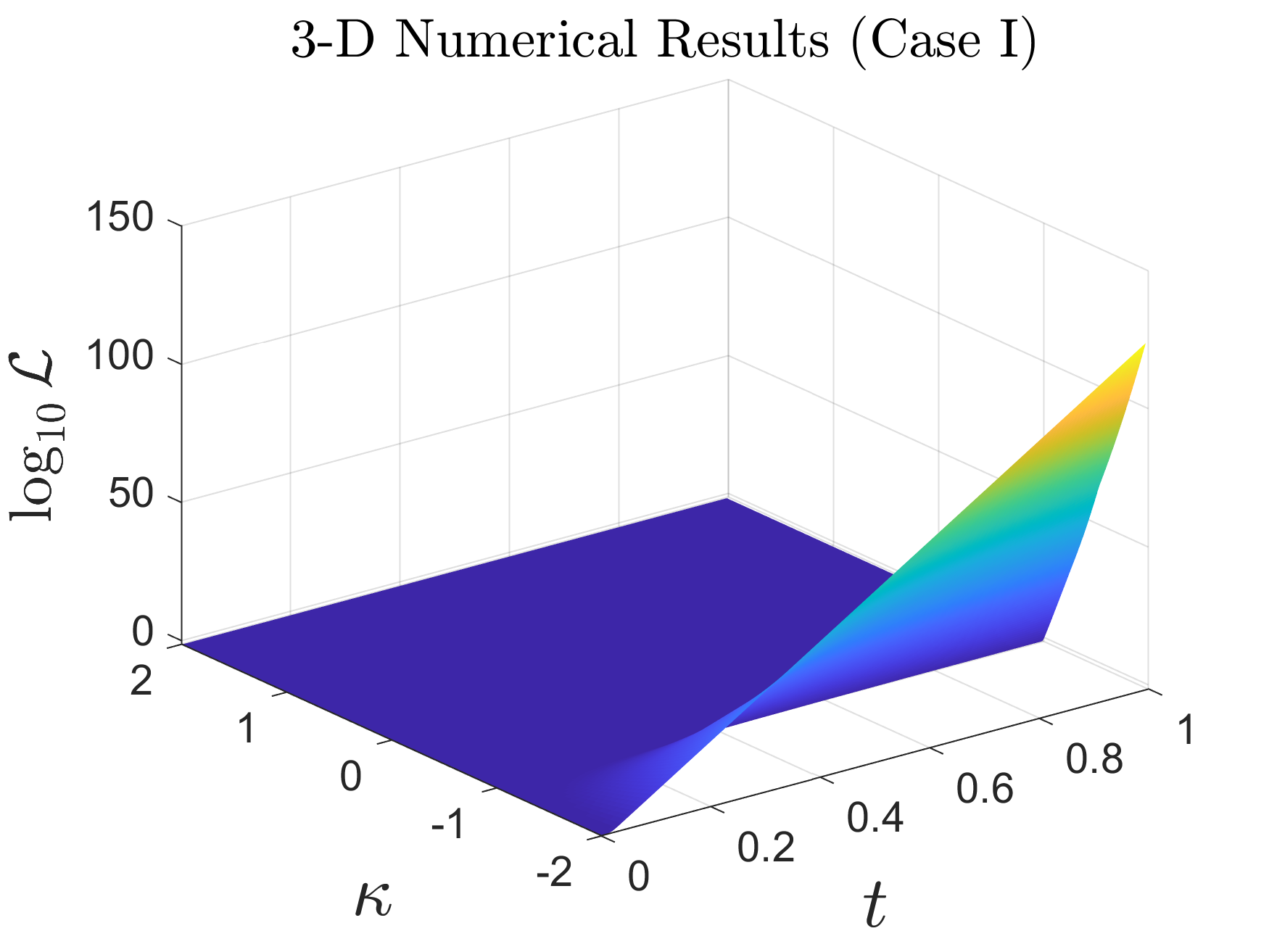}\hspace*{1cm}
            \includegraphics[trim=0.1cm 0.2cm 0.8cm 0.2cm, clip, width=7.cm]{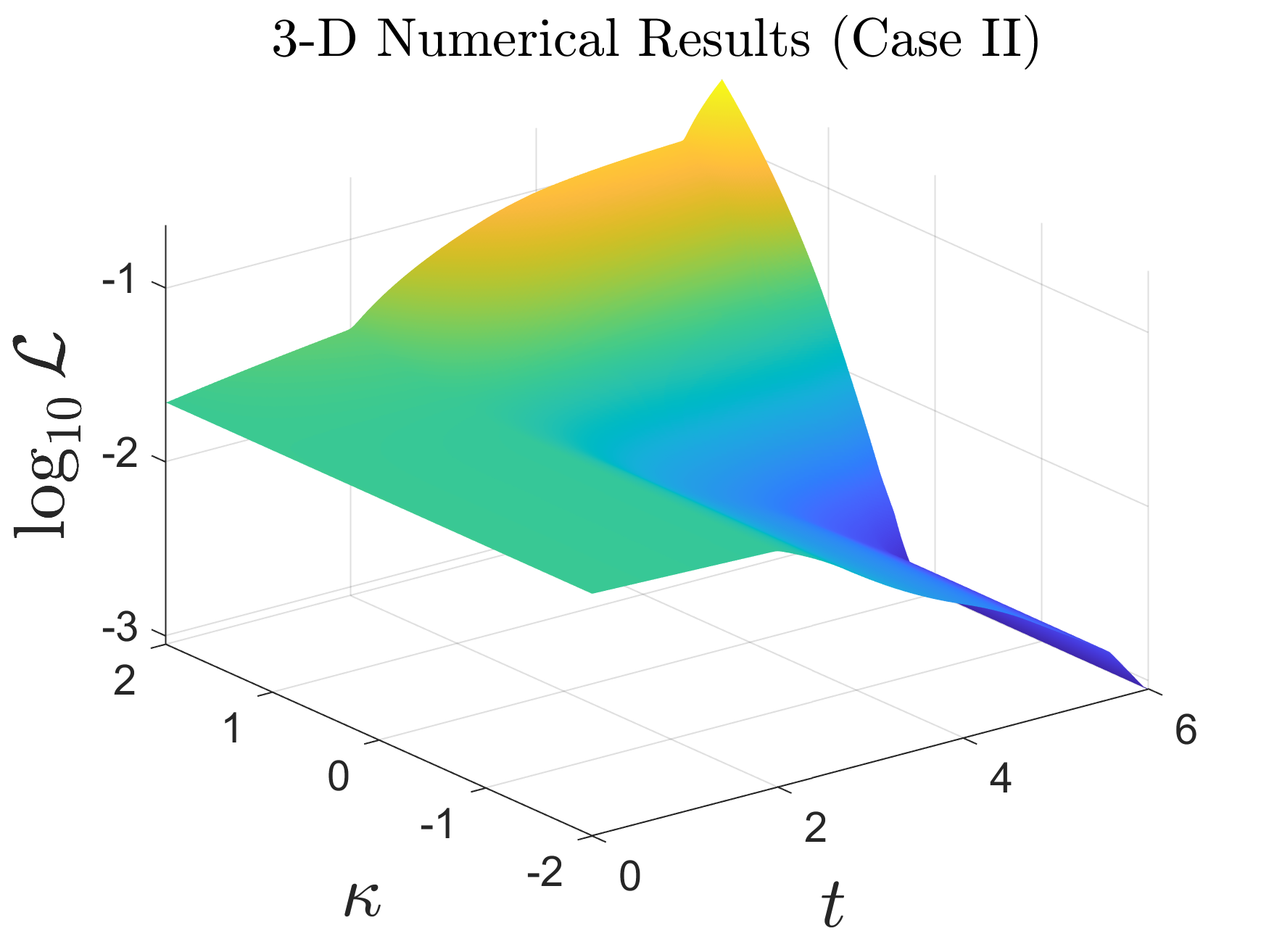}}
\caption{\sf Example 10: Numerical results of Cases I (left) and II (right).\label{fig20}}
\end{figure}

\subsection{Feedback Control of Two Parameters}
In this section, we consider a more complex system---heat transfer in laser powder bed fusion (LPBF)---to show that the proposed method can give the stable domain of complex systems.

\paragraph*{Example 11—Temperature Field Development in Laser Powder Bed Fusion with Feedback Power Regulation.} In this last example, we consider the evolution of the thermal field during a single-track scanning process in laser powder bed fusion (LPBF). LPBF is an additive manufacturing process in which a laser beam heats the powder-bed substrate and locally fuses the powder; further details of the process can be found in \cite{CHOWDHURY20222109}. Figure \ref{fig:LPBF} illustrates the single-track LPBF process considered in this study. One can see that the process is carried out on a cuboidal workpiece with dimensions
$$
(l_x,l_y,l_z)=(1.2\,\mathrm{mm},\,0.45\,\mathrm{mm},\,0.35\,\mathrm{mm}).
$$
During scanning, the laser beam center moves at a constant velocity $v=1\,\mathrm{m/s}$, with $(x_L(t),y_L(t))=(vt,0)$. The laser has a spot radius $R_L=25\,\mu\mathrm{m}$ and a Gaussian intensity distribution, with its power regulated by a feedback control law.
\begin{figure}
    \centering
    \includegraphics[width=0.6\linewidth]{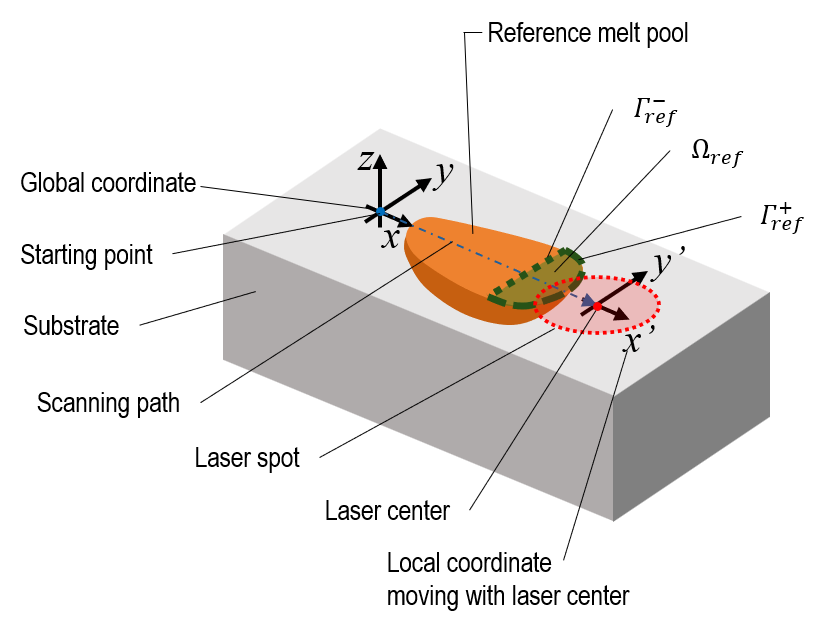}
    \caption{\sf Example 11: LPBF process and defined domains at the reference state for power control and Lyapunov function definition.}
    \label{fig:LPBF}
\end{figure}

For moderate laser energy input, heat transfer in the process is predominantly governed by thermal conduction. The temperature field therefore satisfies the heat conduction equation
\begin{equation}\label{3.12}
T_t(x,y,z,t)=\alpha\bigl(T_{xx}(x,y,z,t)+T_{yy}(x,y,z,t)+T_{zz}(x,y,z,t)\bigr)+P(t)\,f_L\bigl(x-x_L(t),y-y_L(t),z\bigr),
\end{equation}
where $\alpha$ denotes the thermal diffusivity, $P(t)$ is the laser power input, and $f_L$ represents the volumetric laser power distribution relative to the moving laser position. The laser power $P(t)$ is determined by the feedback control law
\begin{equation}\label{3.13}
P(t)=P_{\mathrm{ref}}+\kappa\,\alpha_0(t)\sqrt{\frac{B(t)}{A}},
\end{equation}
where $P_{\mathrm{ref}}=150\,\mathrm{W}$ is the reference laser power and $\kappa$ is the feedback gain. The quantities $\alpha_0(t)$, $B(t)$, and $A$ are defined in terms of the relative coordinates $(x',y')=(x-x_L(t),y-y_L(t))$ by
\begin{equation*}
\alpha_0(t)=\frac12\,\mathrm{sgn}\!\left(\int_{(x',y')\in\Gamma^+_{\mathrm{ref}}} \Bigl(T(vt_{\mathrm{ref}}+x',y',0,t_{\mathrm{ref}})-T(vt+x',y',0,t)\Bigr)\,dS\right),
\end{equation*}
\begin{equation*}
\begin{aligned}
B(t)&=\int_{(x',y')\in\Gamma^+_{\mathrm{ref}}}\Bigl(T_x(vt_{\mathrm{ref}}+x',y',0,t_{\mathrm{ref}})-T_x(vt+x',y',0,t)\Bigr)^2\\
    &+\Bigl(T_y(vt_{\mathrm{ref}}+x',y',0,t_{\mathrm{ref}})-T_y(vt+x',y',0,t)\Bigr)^2\,dS,
\end{aligned}
\end{equation*}
and
\begin{equation*}
A=\int_{\Gamma^-_{\mathrm{ref}}} v\,dS.
\end{equation*}
The subscript ``ref'' denotes quantities evaluated at the reference state, which corresponds to a stabilized thermal field in the relative coordinate system $(x',y')$. This reference state is generated by applying the constant power input $P(t)\equiv P_{\mathrm{ref}}$ for a sufficiently long time after the onset of scanning; in this study, the
reference time is taken as $t_{\mathrm{ref}}=800\,\mu\mathrm{s}$.

The domains $\Gamma^+_{\mathrm{ref}}$, $\Gamma^-_{\mathrm{ref}}$, and $\Omega_{\mathrm{ref}}$ are illustrated in Figure~\ref{fig:LPBF}. The melting interface on the top surface, $\Gamma^+_{\mathrm{ref}}$, is defined as
\begin{equation*}
\Gamma^+_{\mathrm{ref}}=\Bigl\{(x',y')\ \big|\ T(vt_{\mathrm{ref}}+x',y',0,t_{\mathrm{ref}})=T_m\ \text{and}\ T_t(x',y',0,t_{\mathrm{ref}})\ge 0\Bigr\},
\end{equation*}
where $T_m=1723\,\mathrm{K}$ is the melting temperature. The line segment $\Gamma^-_{\mathrm{ref}}$ separates the solid--liquid interface $T=T_m$ into melting and solidification regions and is defined as
\begin{equation*}
\begin{aligned}
\Gamma^-_{\mathrm{ref}} =\Bigl\{ & (x',y')\ \Big|\ x'=\min\bigl(x''\in(x'',y'')\in\Gamma^+_{\mathrm{ref}}\bigr),\\
 & \min\bigl(y''\in(x'',y'')\in\Gamma^+_{\mathrm{ref}}\bigr)\le y'\le\max\bigl(y''\in(x'',y'')\in\Gamma^+_{\mathrm{ref}}\bigr)\Bigr\}.
\end{aligned}
\end{equation*}
The domain $\Omega_{\mathrm{ref}}$ is the region on the top surface enclosed by $\Gamma^+_{\mathrm{ref}}\cup\Gamma^-_{\mathrm{ref}}$. The volumetric laser heat source is given by
\begin{equation*}
f_L\bigl(x-x_L(t),y-y_L(t),z\bigr)=\frac{2 {\cal A}\,d_{\mathrm{opt}}}{\pi R_L^2}\exp\!\left(-\frac{2\bigl((x-vt)^2+y^2\bigr)}{R_L^2}+\frac{z}{d_{\mathrm{opt}}}\right),
\end{equation*}
where ${\cal A}=0.27$ is the laser absorptivity and $d_{\mathrm{opt}}=34\,\mu\mathrm{m}$ is the optical penetration depth.

The initial condition is set as a uniform temperature distribution $T(x,y,z,0)=300\,\mathrm{K}$, subject to the specified following boundary conditions. The bottom face is maintained at a constant temperature $T(x,y,-l_z,t)=300\,\mathrm{K}$, the four lateral faces are assumed to be adiabatic $T_x(x,0,z,t)=T_x(x,l_x,z,t)=T_y(x,\pm 0.5l_y,z,t)=0$, and the top surface is subject to convective and radiative heat losses to the surrounding atmosphere 
$$
k\,T_z(x,y,0,t)=-\Bigl(h\bigl(T(x,y,0,t)-T_\infty\bigr)+\varepsilon\sigma\bigl(T(x,y,0,t)^4-T_\infty^4\bigr)\Bigr),
$$
where $k=20.9\,\mathrm{W\,m^{-1}K^{-1}}$ is the thermal conductivity, $h=20\,\mathrm{W\,m^{-2}K^{-1}}$ is the convective heat transfer coefficient, $T_\infty=300\,\mathrm{K}$ is the ambient temperature, $\varepsilon=0.36$ is the emissivity, and $\sigma$ is the Stefan--Boltzmann constant.

The resulting initial--boundary value problem is solved numerically using a finite-difference method on a uniform mesh with $(N_x,N_y,N_z)=(128,48,32)$. Time integration is performed using a Runge--Kutta scheme with a time step $\Delta t=40\,\mathrm{ns}$. The objective of the feedback power regulation is to control the Lyapunov-type indicator
\begin{equation*}
\begin{aligned}
L(t)=\int_{(x',y')\in\Omega_{\mathrm{ref}}} & \Bigl(T_x(vt_{\mathrm{ref}}+x',y',z,t_{\mathrm{ref}})-T_x(vt+x',y',z,t)\Bigr)^2 \\ +&\Bigl(T_y(vt_{\mathrm{ref}}+x',y',z,t_{\mathrm{ref}})-T_y(vt+x',y',z,t)\Bigr)^2\,dx'\,dy',
\end{aligned}
\end{equation*}
which quantifies the discrepancy between the temperature gradients in the $x$- and $y$-directions over a moving domain that maintains a fixed relative position with respect to the laser spot. We then apply the Bayesian update procedure of \S2.2 (Algorithm~1), using $L(t)$ as the observed indicator, to update the discrete probability mass function over the chosen parameter grid.

In this example, we consider a two-parameter setting in which the Bayesian damping factor $\alpha$  in \eref{3.12} and  the feedback gain $\kappa$ in \eref{3.13} are treated as the control parameters; thus, the Bayesian procedure updates the joint distribution of $(\kappa, \alpha)$, as illustrated in Figure~\ref{fig26}.

\begin{figure}[ht!]
\centerline{\includegraphics[trim=0.7cm 0.2cm 0.9cm 0.1cm, clip, width=6.cm]{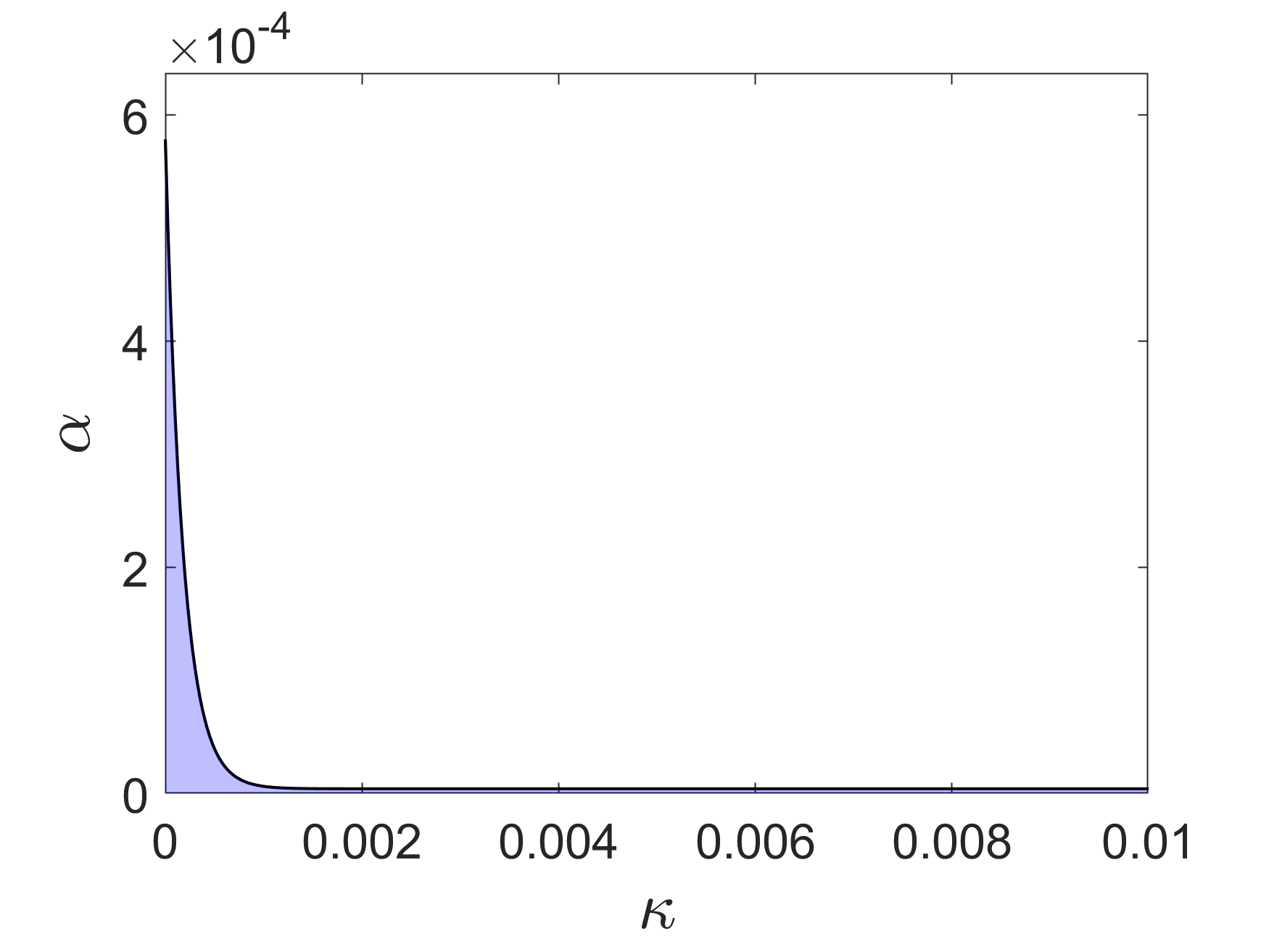}\hspace*{1cm}}
\caption{\sf Example 11: Prior and posterior (purple color) probability distributions of the feedback parameters.\label{fig26}}
\end{figure}

To further quantify the decay behaviour of the indicator, we fit the discrete Lyapunov functions from the subregion $[0, 1e-4]\times[0, 3.3e-4]$ by a power-law model $L(t)\approx C\,t^{-a}$ by a nonlinear least-squares procedure, and record the fitted parameters $(a,C)$. The corresponding top-view plots of the fitted values are shown in Figure~\ref{fig27}.
\begin{figure}[H]
\centerline{\includegraphics[trim=0.4cm 0.1cm 0.7cm 0.1cm, clip, width=6cm]{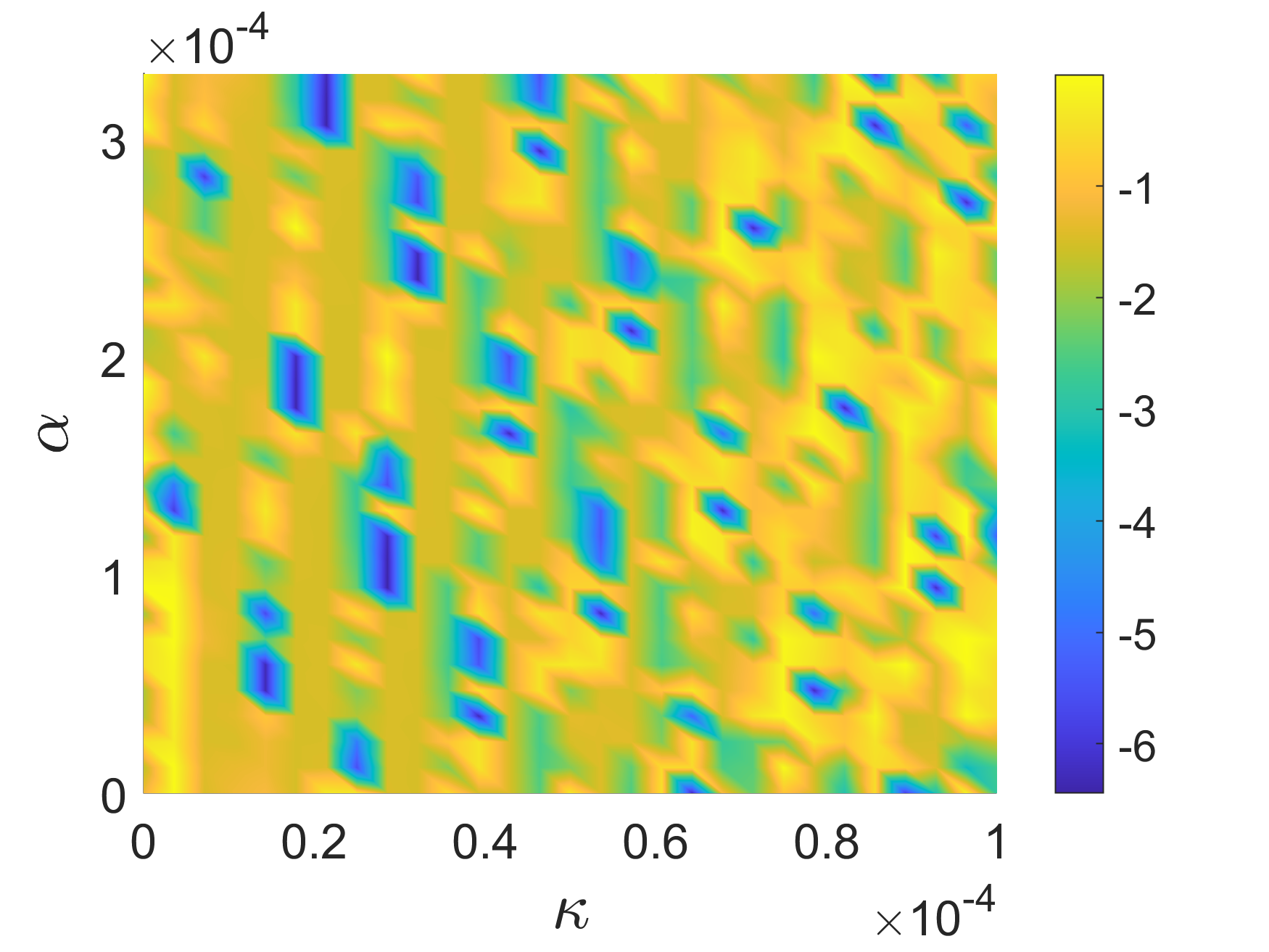}\hspace*{0.5cm}
            \includegraphics[trim=0.4cm 0.1cm 0.7cm 0.1cm, clip, width=6cm]{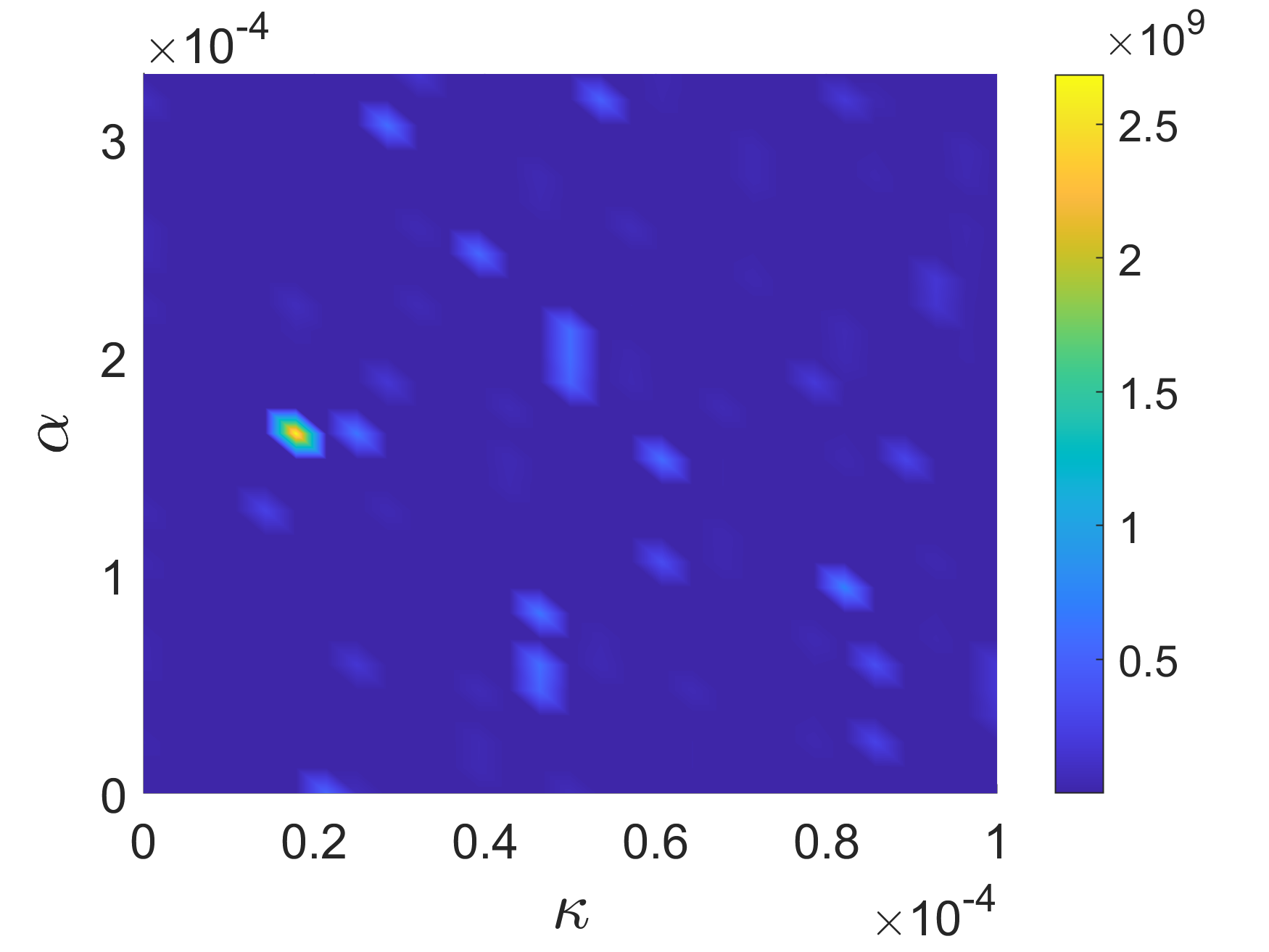}}
\caption{\sf Example 11: Least-squares power-law fit parameters: fitted decay exponent $a$ (left) and fitted prefactor $C$ (right).\label{fig27}}
\end{figure}

To assess the fit quality, we compute the standard deviation
\begin{equation*}
e_{\mathrm{deviation}}=\sqrt{\frac{1}{N}\sum_{i=1}^{N}\bigl(L_i-\xbar L\bigr)^2},
\end{equation*}
where $L_i=L(t_i)$ and $\xbar L=\frac{1}{N}\sum^{i=N}_{i=1}  L_i$. The values of $e_{\mathrm{deviation}}$ and $\log_{10}\bigl(e_{\mathrm{deviation}}\bigr)$ are plotted in Figure~\ref{fig28}.
\begin{figure}[H]
   \centerline{\includegraphics[trim=0.4cm 0.1cm 0.7cm 0.1cm, clip, width=6cm]{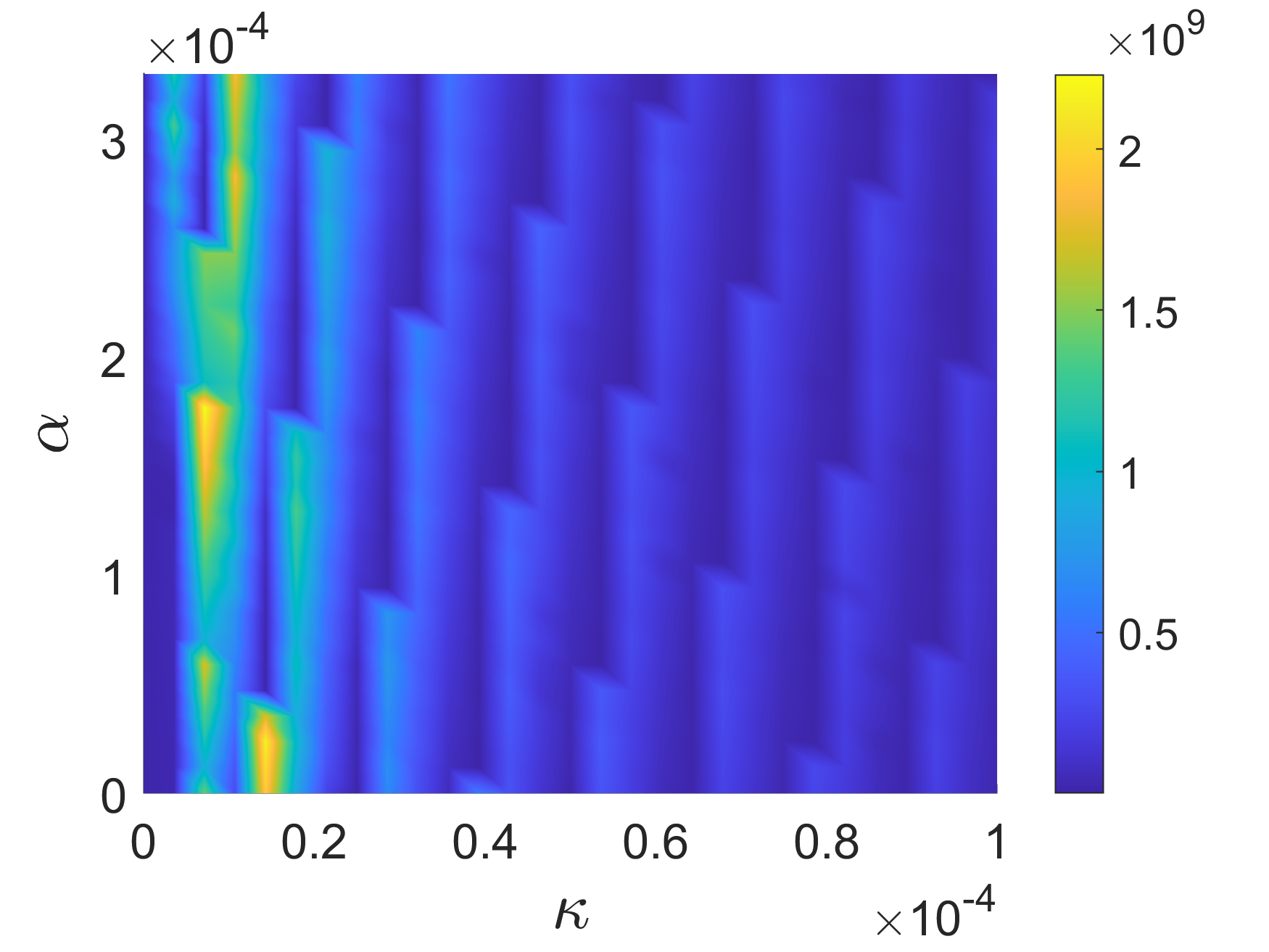}\hspace*{1cm}
               \includegraphics[trim=0.4cm 0.1cm 0.7cm 0.1cm, clip, width=6.cm]{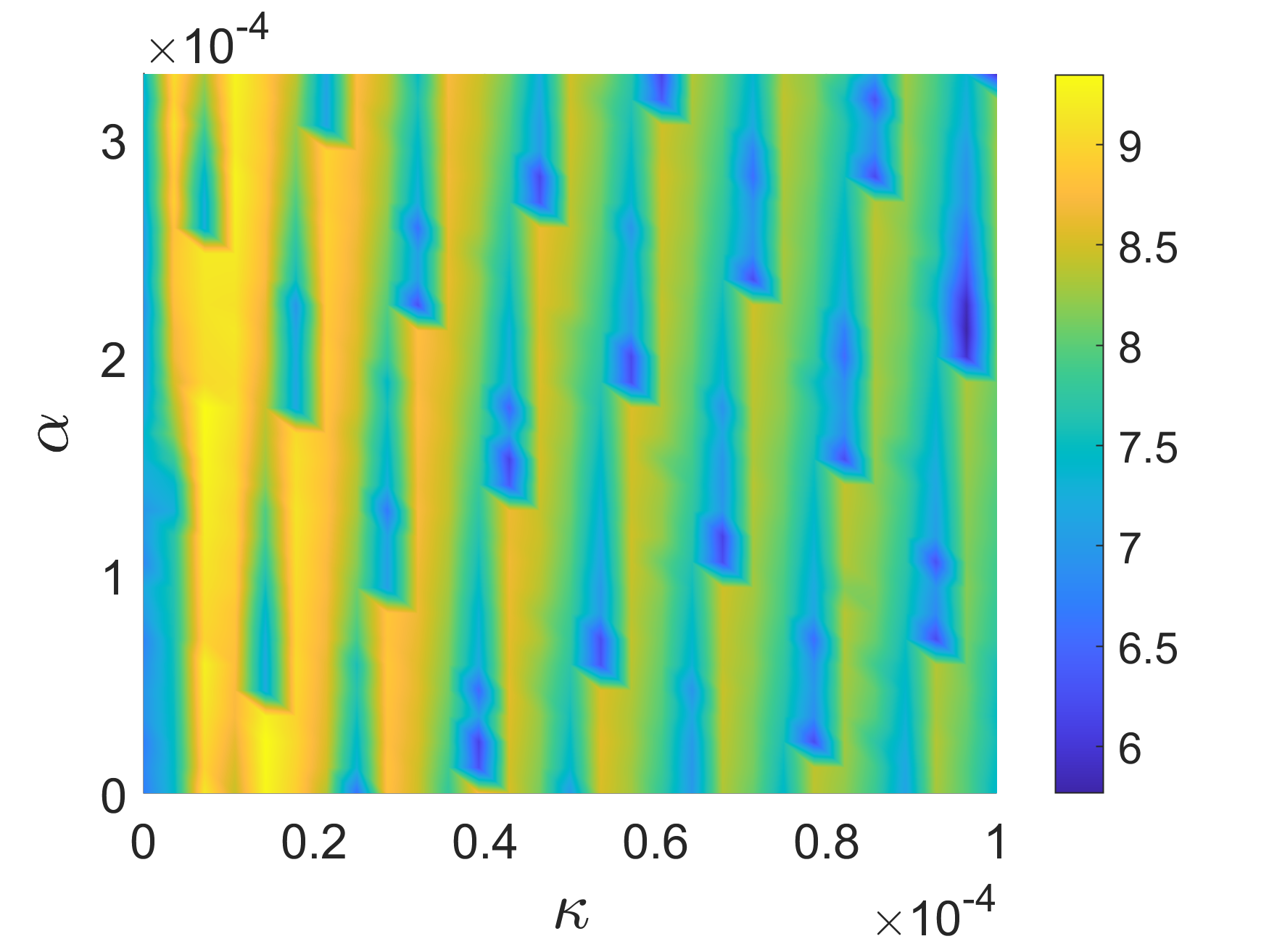}}
   \caption{\sf Example 11: Fit-quality diagnostics for the power-law model: the standard deviation $e_{\mathrm{deviation}}$ (left) and $\log_{10}(e_{\mathrm{deviation}})$ (right).\label{fig28}}
\end{figure}

Finally, Figure~\ref{fig29} shows thresholded regions (black) where $e_{\mathrm{deviation}}$ falls below prescribed tolerances, providing an additional diagnostic for identifying parameter regimes that exhibit a consistent decay trend.
\begin{figure}[H]
   \centerline{\includegraphics[trim=0.6cm 0.1cm 1.1cm 0.1cm, clip, width=6.cm]{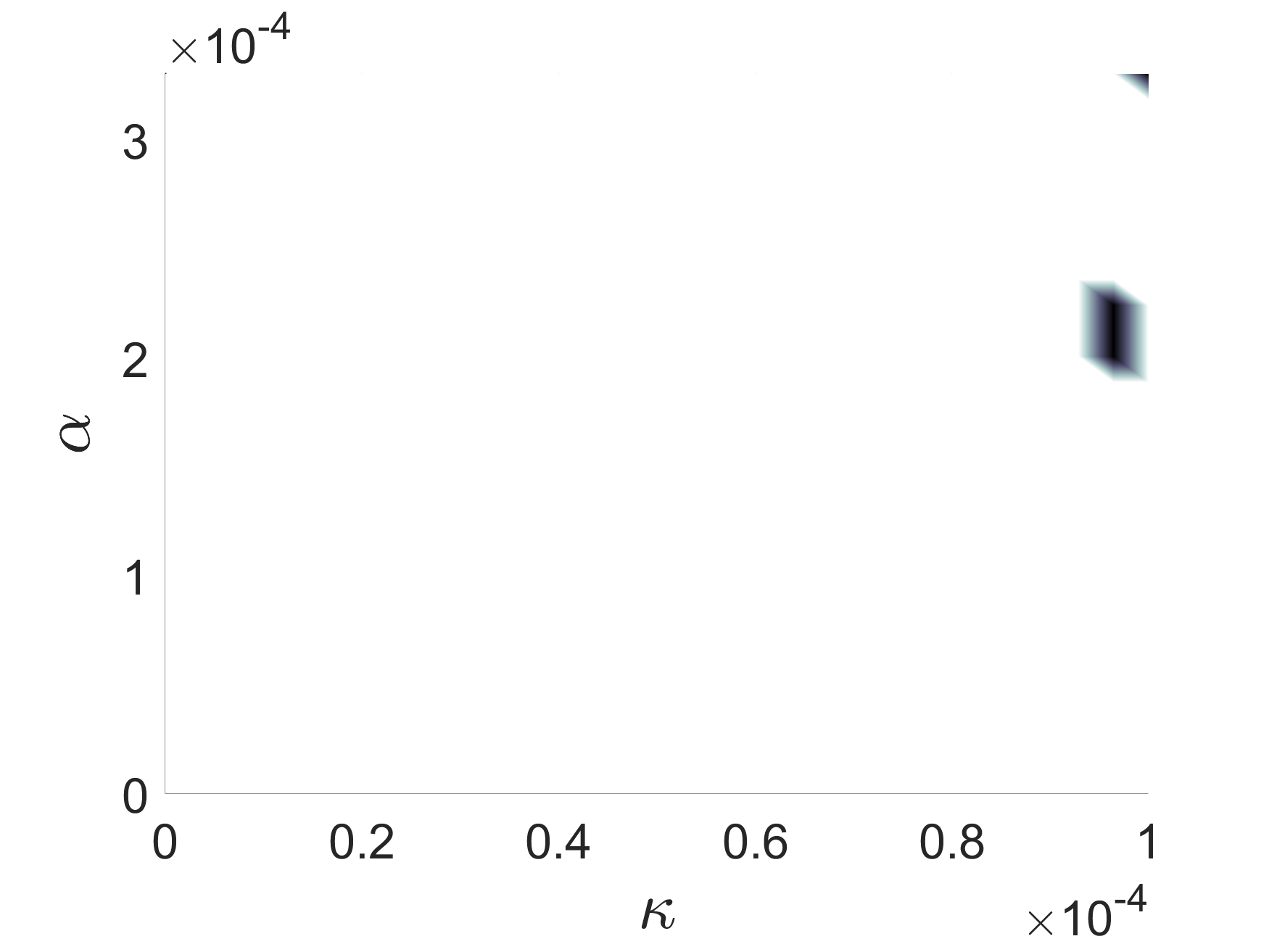}\hspace*{1cm}
               \includegraphics[trim=0.6cm 0.1cm 1.1cm 0.1cm, clip, width=6.cm]{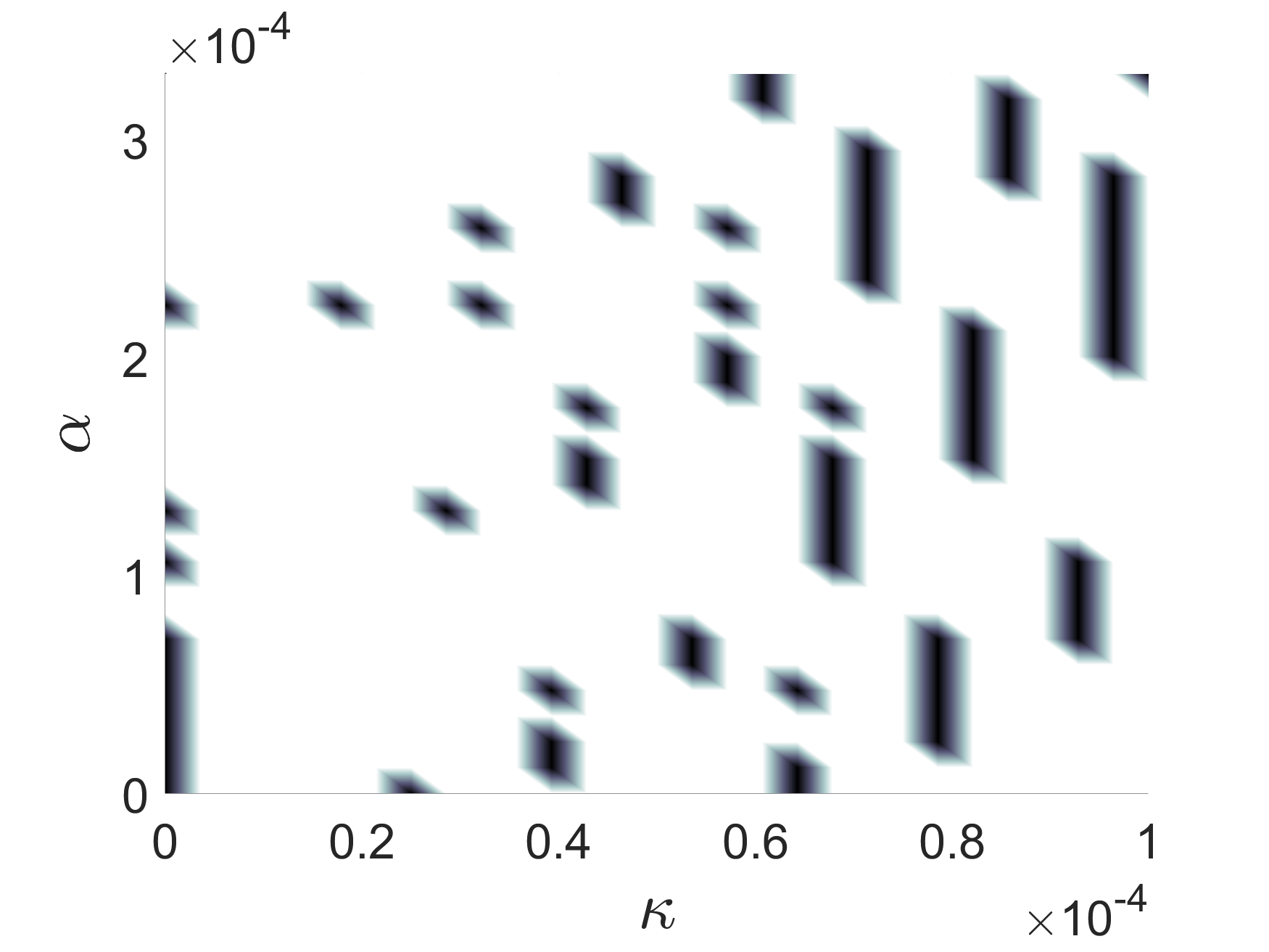}}
   \vskip 12pt 
   \centerline{\includegraphics[trim=0.6cm 0.1cm 1.1cm 0.1cm, clip, width=6.cm]{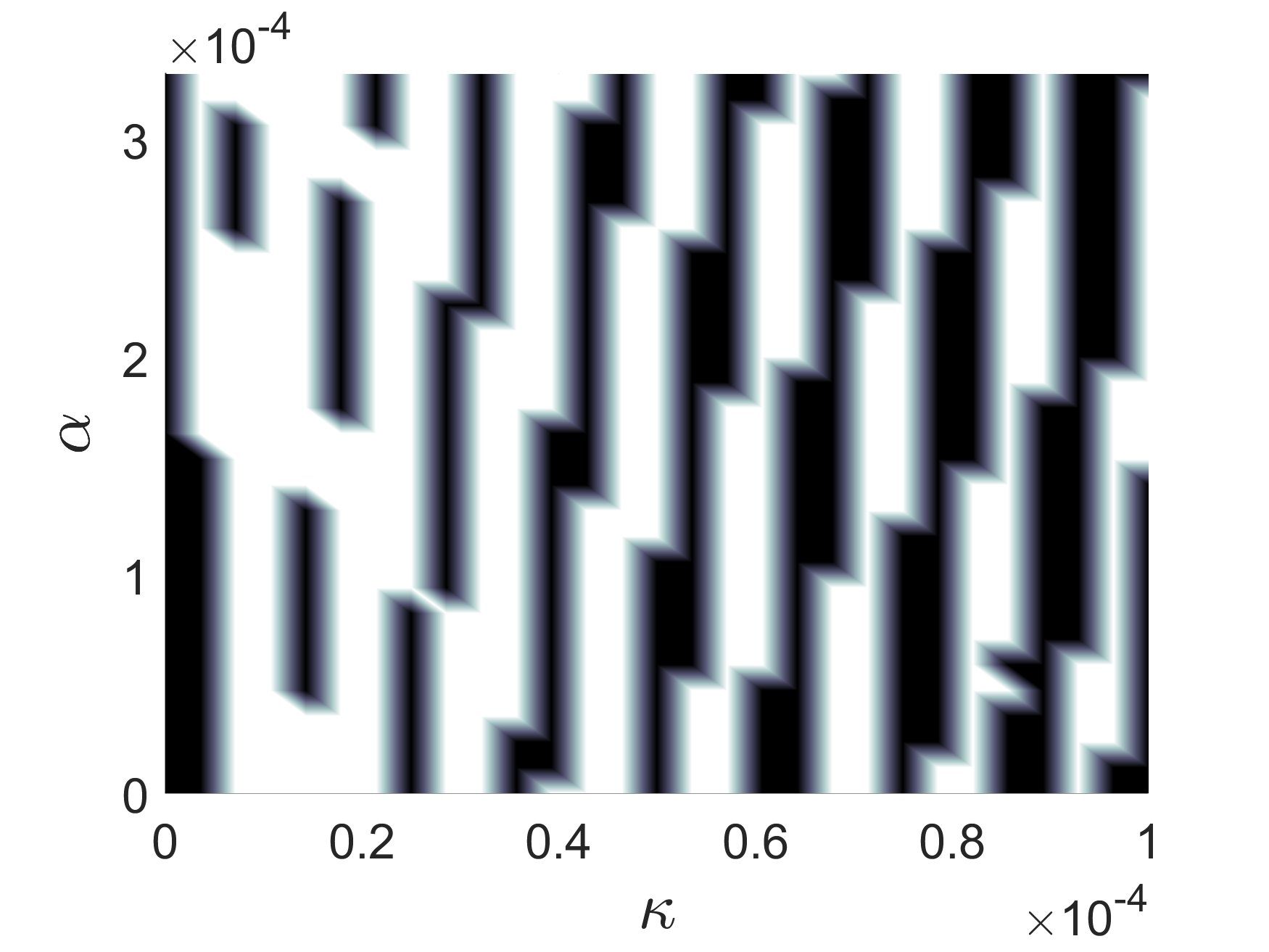}\hspace*{1cm}
               \includegraphics[trim=0.6cm 0.1cm 1.1cm 0.1cm, clip, width=6.cm]{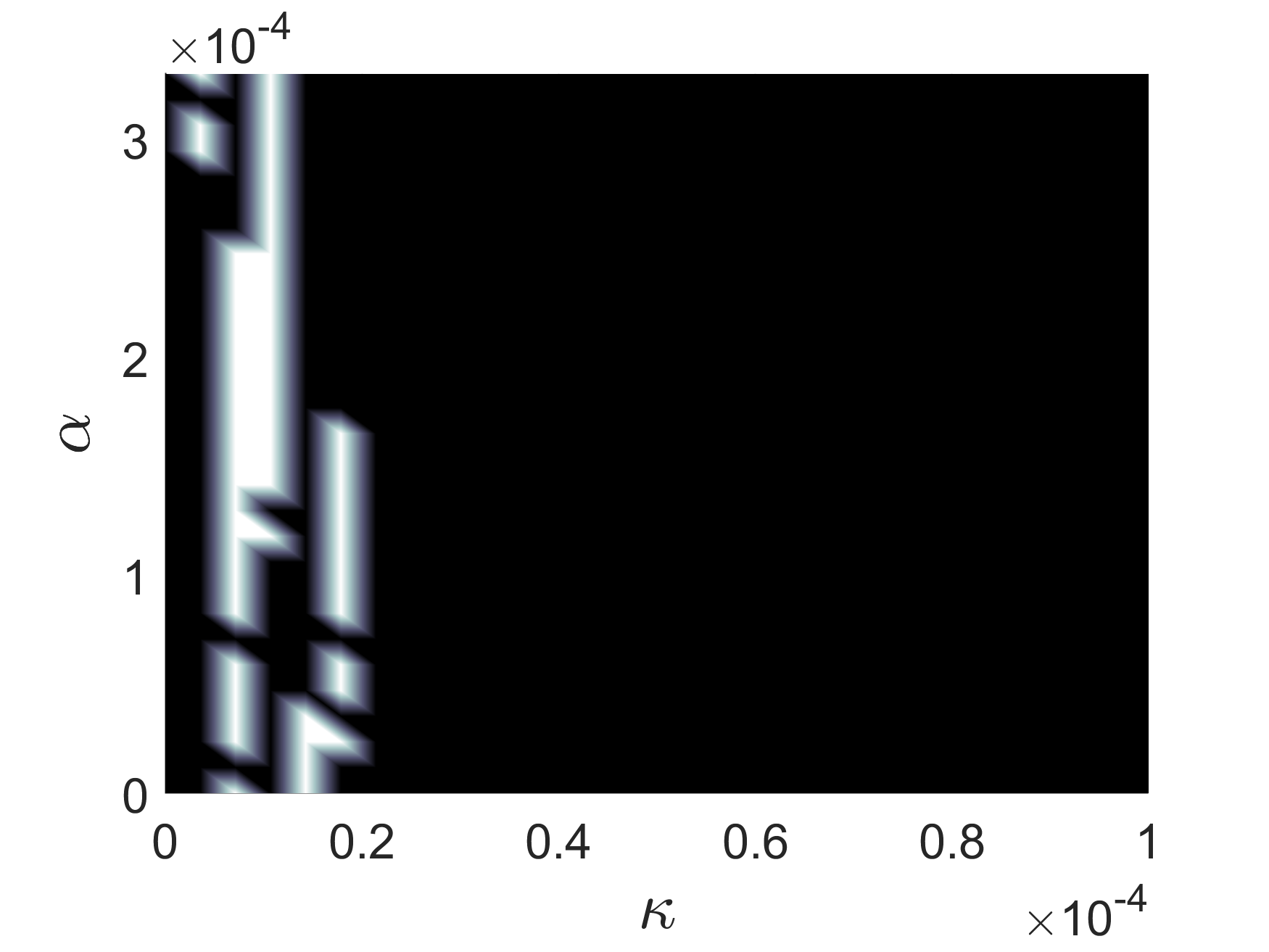}}
   \caption{\sf Example 11: Thresholded regions (black) where the power-law fit error satisfies $e_{\mathrm{deviation}}<10^{-6}$ (top left), $e_{\mathrm{deviation}}<10^{-7}$ (top right), $e_{\mathrm{deviation}}<10^{-8}$ (bottom left), and $e_{\mathrm{deviation}}<10^{-9}$ (bottom right).\label{fig29}}
\end{figure}

These results indicate that the proposed algorithm remains effective in a two-parameter system, with $\kappa$ and $\alpha$ treated as the unknown parameters to be identified via the Bayesian update.

\section{Conclusions}\label{sec4}
In this paper, we have introduced a Bayesian feedback framework for identifying stabilizing boundary controls in hyperbolic balance laws. The core idea is to iteratively update a prior over control parameters using solution-driven diagnostics (Lyapunov functionals or discrete energies), coupled with a damping and normalization step that concentrates probability on stabilizing regimes. This yields an efficient, nonintrusive post-processing layer that can be placed atop standard finite-volume solvers. On linear benchmarks (advection/wave equations and the linearized Saint-Venant system), the method reproduces analytically known stability domains, including mixed boundary couplings. On nonlinear and stochastic tests, including the nonlinear Saint-Venant system, one- and two-dimensional Burgers equations, and Burgers equations with random inputs, the method remains robust and reveals the dependence of stabilizing regions on the initial data and boundary feedback configuration. In particular, the 2-D Burgers example demonstrates that
the Bayesian update can be applied to a multidimensional balance law with feedback boundary conditions imposed in both spatial directions. Extensions to a second-order LLF discretization and the evolution of the thermal field during a single-track scanning process in laser powder bed fusion indicate that the approach is scheme-independent and applicable beyond purely conservative models. Future work will focus on a rigorous analysis of the probability iteration, including posterior concentration on the stabilizing set and its relation to decay rates of the indicator, and on extensions to multi-parameter and constrained boundary controls, as well as generalizations to networks and multidimensional settings.

\appendix
\setcounter{equation}{0}
\section{First-Order Local Lax-Friedrichs Scheme for Stochastic Hyperbolic Systems}\label{appa}

We consider the first-order LLF finite-volume scheme for the stochastic system. Assume the computational domain is covered with uniform cells $C_{j,k}:=[x_\jmh,x_\jph]\times [\xi_\kmh, \xi_\kph]$ with $x_\jph-x_\jmh\equiv\dx$ and $\xi_\kph-\xi_\kmh\equiv\dxi$ centered at $(x_j,\xi_k)=\big((x_\jmh+x_\jph)/2,(\xi_\kmh+\xi_\kph)/2\big)$, $\,j=1,\ldots,N_x$, $k=1,\ldots,N_\xi$ and the cell average values 
\begin{equation*}
  \xbar\mU_{j,k}(t):\approx\frac{1}{\dx \dxi}\int\limits_{C_{j,k}}\mU(x,t)\,{\rm d}x{\rm d}\xi
\end{equation*}
are available at a certain time level $t$.

The computed cell averages $\xbar \mU_{j,k}$ of the 1-D system \eref{1.1} are evolved in time by the first-order LLF scheme:
\begin{equation*}
  \xbar \mU^{n+1}_{j,k}=  \xbar \mU^{n}_{j,k} - \dfrac{\dt}{\dx}\Big( \bm{{\cal F}}_{\jph,k} - \bm{{\cal F}}_{\jmh,k} \Big)+\dt\,\mS(\xbar \mU^n_{j,k}),
\end{equation*}
where $\bm{{\cal F}}_{\jph,k}$ stands for the finite-volume numerical flux
\begin{equation*}
  \bm{{\cal F}}_{\jph,k}=\frac{1}{2} \big( \mF (\xbar \mU_{j,k}) +\mF(\xbar \mU_{j+1,k}) \big)-\frac{\alpha_{\jph,k}}{2}\big(\xbar \mU_{j+1,k}-\xbar \mU_{j,k} \big), 
\end{equation*}
with the local speeds of propagation estimated using the eigenvalues $\lambda_1({\cal A}) \le \ldots \le \lambda_d({\cal A})$ of the matrix ${\cal A}=\frac{\partial \mF}{\partial \mU}$:
\begin{equation*}
\alpha_{\jph,k} = \max \big\{|\lambda_1 (\xbar \mU_{j,k})|, |\lambda_1 (\xbar \mU_{j+1,k})|, \ldots, |\lambda_d (\xbar \mU_{j,k})|, |\lambda_d (\xbar \mU_{j+1,k})|      \big\}. 
\end{equation*}
The time step $\dt$ is computed by the CFL condition, $\dt = {\rm CFL} \frac{\dx}{ \max\limits_{j,k} \{\alpha_{\jph,k}\} }$.

\section{Second-Order Local Lax-Friedrichs Scheme}\label{appb}
In the second-order LLF scheme, the cell averages $\xbar \mU_j$ of the 1-D system \eref{1.1} are evolved in time by solving the following system of ODEs:
\begin{equation}\label{B1a}
  \frac{{\rm d}\xbar \mU_j  }{ {\rm d} t} = -\dfrac{ \bm{{\cal F}}_\jph - \bm{{\cal F}}_\jmh }{\dx}+\mS(\xbar \mU_j),
\end{equation}
where $\bm{{\cal F}}_\jph$ stand for the second-order finite-volume numerical fluxes 
\begin{equation*}
  \bm{{\cal F}}_\jph=\frac{1}{2} \big( \mF (\mU^-_\jph) +\mF (\mU^+_\jph)  \big)-\frac{\alpha_\jph}{2}\big(\mU^+_\jph-\mU^-_\jph \big), 
\end{equation*}
with the local speeds of propagation estimated using the eigenvalues $\lambda_1({\cal A}) \le \ldots \le \lambda_d({\cal A})$ of the matrix ${\cal A}=\frac{\partial \mF}{\partial \mU}$:
\begin{equation*}
\alpha_\jph = \max \big\{|\lambda_1 (\mU^-_\jph)|, |\lambda_1 (\mU^+_\jph)|, \ldots, |\lambda_d (\mU^-_\jph)|, |\lambda_d (\mU^+_\jph)|      \big\}. 
\end{equation*}
Here, $\mU^\pm_\jph$ are the one-sided point values of $\mU$ at the cell interface $x=x_\jph$, estimated using a piecewise linear interpolant
\begin{equation}\label{B1}
\widetilde\mU(x)=\,\xbar\mU_j+(\mU_x)_j(x-x_j),\quad x\in C_j,
\end{equation}
which leads to
\begin{equation}\label{B2}
\mU^-_\jph=\,\xbar\mU_j+\frac{\dx}{2}(\mU_x)_j,\quad\mU^+_\jph=\,\xbar\mU_{j+1}-\frac{\dx}{2}(\mU_x)_{j+1}.
\end{equation}
In order to ensure the reconstruction \eref{B1}--\eref{B2} is non-oscillatory, one needs to compute the slopes $(\mU_x)_j$ in \eref{B1} with the help of a 
nonlinear limiter. In all of the numerical experiments reported in \S\ref{sec3}, we have used a generalized minmod limiter
\cite{lie03,Nessyahu90,Sweby84}:
\begin{equation}
(\mU_x)_j={\rm minmod}\left(\theta\frac{\,\xbar\mU_j-\,\xbar\mU_{j-1}}{\dx},\,\frac{\,\xbar\mU_{j+1}-\,\xbar\mU_{j-1}}{2\dx},\,
\theta\frac{\,\xbar\mU_{j+1}-\,\xbar\mU_j}{\dx}\right),\quad\theta\in[1,2],
\label{B3}
\end{equation}
applied in a component-wise manner. Here, the minmod function is defined as
\begin{equation*}
{\rm minmod}(z_1,z_2,\ldots):=\begin{cases}
\min_j\{z_j\}&\mbox{if}~z_j>0\quad\forall\,j,\\
\max_j\{z_j\}&\mbox{if}~z_j<0\quad\forall\,j,\\
0            &\mbox{otherwise}.
\end{cases}
\end{equation*}
The parameter $\theta$ in \eref{B3} is used to control the amount of numerical viscosity present in the resulting scheme, and larger values of $\theta$ correspond to sharper but, in general, more oscillatory reconstructions. In this paper, we use $\theta= 1.3$.

Finally, we numerically integrate the ODE systems \eref{B1a} by the three-stage third-order strong stability preserving Runge-Kutta (SSP RK3) method (see, e.g., \cite{Gottlieb11,Gottlieb12}), and the time step $\dt$ is computed by the CFL condition, $\dt={\rm CFL} \frac{\dx}{\max\limits_j\{\alpha_\jph\}}$.

\begin{DA}
\paragraph{Funding.} The work of S. Chu and  M. Herty was funded by the Deutsche Forschungsgemeinschaft (DFG, German Research Foundation)-SPP 2410 Hyperbolic Balance Laws in Fluid Mechanics: Complexity, Scales, Randomness (CoScaRa) within the Project(s) HE5386/26-1 (Numerische Verfahren für gekoppelte Mehrskalenprobleme, 525842915) and (Zufällige kompressible Euler Gleichungen: Numerik und ihre Analysis, 525853336) HE5386/27-1, and  the Deutsche Forschungsgemeinschaft (DFG, German Research Foundation) - SPP 2183: Eigenschaftsgeregelte Umformprozesse with the Project(s) HE5386/19-2,19-3 Entwicklung eines flexiblen isothermen Reckschmiedeprozesses für die eigenschaftsgeregelte Herstellung von Turbinenschaufeln aus Hochtemperaturwerkstoffen (424334423). Support through HE5386/33-1 Control of Interacting Particle Systems, and Their Mean-Field, and Fluid-Dynamic Limits (560288187) and HE5386/34-1 Partikelmethoden für unendlich dimensionale Optimierung ( 561130572) is acknowledged.

\paragraph{Conflicts of interest.} On behalf of all authors, the corresponding author states that there is no conflict of interest.

\paragraph{Data and software availability.} The data that support the findings of this study and FORTRAN/Python codes developed by the authors and
used to obtain all of the presented numerical results are available from the corresponding author upon reasonable request.
\end{DA}

\bibliography{reference_YL}
\bibliographystyle{siam}
\end{document}